\documentclass[]{amsart}

\usepackage{amsmath}
\usepackage{amssymb}
\usepackage{graphicx,color}
\usepackage{amsxtra}
\usepackage{mathtools}
\usepackage{enumerate}
\usepackage{nicefrac}
\mathtoolsset{showonlyrefs} 
\usepackage{color}
\usepackage{hyperref}
\hypersetup{
	colorlinks   = true, 
	urlcolor     = blue, 
	linkcolor    = blue, 
	citecolor   = red 
}

\newtheorem{theorem}{Theorem}[section]
\newtheorem*{theorem*}{Theorem}
\newtheorem{corollary}{Corollary}[theorem]
\newtheorem{Lemma}[theorem]{Lemma}
\newtheorem{proposition}[theorem]{Proposition}
\theoremstyle{definition}
\theoremstyle{definition}
\newtheorem{definition}[theorem]{Definition}

\theoremstyle{remark}
\newtheorem{remark}[theorem]{Remark}

\numberwithin{equation}{section}

\newcommand{\grads}{\nabla^{s}}
\newcommand{\mus}{\mu(N, s)}

\title[Besov regularity for the Bessel \( (p,s) \)-Laplacian]{Besov regularity of solutions to the Dirichlet problem for the Bessel \( (p,s) \)-Laplacian}

\author[J. P.~Borthagaray]{Juan Pablo Borthagaray}
\address[J. P.~Borthagaray]{PEDECIBA -- Program for the Development of Basic Sciences and Instituto de Matematica y Estad\'istica ``Rafael Laguardia'', Universidad de la Rep\'ublica, Montevideo, Uruguay.}
\email{jpborthagaray@fing.edu.uy}

\author[L. M. Del Pezzo]{Leandro M. Del Pezzo}
    \address[L. M. Del Pezzo]{PEDECIBA -- Program for the Development of Basic Sciences and  Instituto de Estad\'istica, Universidad de la Rep\'ublica, Montevideo, Uruguay.}
\email{leandro.delpezzo@fcea.edu.uy}

\author[J. C. Rueda Ni\~no]{Jos\'e Camilo Rueda Ni\~no}
\address[J. C. Rueda Ni\~no]{PEDECIBA -- Program for the Development of Basic Sciences and Instituto de Matematica y Estad\'istica ``Rafael Laguardia'', Universidad de la Rep\'ublica, Montevideo, Uruguay.}
\email{jcrueda@fing.edu.uy}

\begin{document}
\begin{abstract}
	We study the Dirichlet problem for a class of fractional $p$-Laplacian operators of order $s \in (0,1)$ defined through the Riesz fractional gradient, which differs fundamentally from the standard
    fractional $p$-Laplacian. Our analysis combines the framework of Lions-Calderón spaces, Besov embeddings, and an adaptation of Nirenberg’s difference quotient method, originally developed by Savaré \cite{savare1998regularity}, to the fractional Riesz setting. As a main result, we establish global Besov regularity estimates for weak solutions. Concretely, in the superquadratic regime \(p \ge 2\), we prove \(u \in \dot{B}_{p,\infty}^{\,s+1/p}(\Omega)\) for \(s \in [\tfrac{1}{p'},1)\), and \(u \in \dot{B}_{p,\infty}^{\,s+\frac{s}{p-1}}(\Omega)\) for \(s \in (0,\tfrac{1}{p'})\). In the subquadratic case \(1<p<2\), we show \(u \in \dot{B}_{p,\infty}^{\,s+1/2}(\Omega)\) for \(s \in [\tfrac{1}{2},1)\), and \(u \in \dot{B}_{p,\infty}^{\,2s}(\Omega)\) for \(s \in (0,\tfrac12)\), with quantitative bounds depending on the source data.
\end{abstract}

	\maketitle

	
	\section{Introduction}
The study of partial differential equations involving fractional-order operators has garnered significant attention in recent decades, driven by their capacity to model a wide range of nonlocal phenomena. Such operators naturally arise in diverse contexts including phase transitions \cite{wang2002metastability}, neural networks \cite{zhang2004existence}, population dynamics in biology \cite{carrillo2005spatial}, and elasticity theory \cite{silling2000reformulation,bellido2020fractional}, where classical local formulations fail to capture effects such as fracture propagation or detachment. Further significant applications include obstacle-type problems \cite{campos2023fractional,campos2024implicit}, fractional transport dynamics \cite{azevedo2024nonlocal}, machine learning and data analysis \cite{lu2024nonparametric}, and image processing \cite{gilboa2007nonlocal}, underscoring both the breadth and practical relevance of fractional-order models. Among the various non-local operators, fractional versions of the \( p \)-Laplacian have been a subject of intense research, extending the classical theory of quasi-linear elliptic equations to the non-local setting.

Fractional Sobolev spaces and the fractional Laplacian have been a subject of great interest in the study of nonlocal and fractional differential equations. However, for a long time, there was no suitable notion of a fractional gradient that could establish a direct connection with the fractional Laplacian and, at the same time, characterize Sobolev-type fractional spaces.

An important step forward was taken in 2015 by Shieh and Spector  \cite{shieh2015new}, who introduced the Riesz fractional gradient as a distributional operator. It is worth noting that this operator had been previously considered by Horváth in \cite{horvath1959some} with the aim of studying composition properties of the Riesz potential. Nonetheless, Horváth did not establish any connection between the operator he defined and a notion of fractional gradient.  Subsequently, \v{S}ilhav\'y~\cite{vsilhavyfractional} provided a structural characterization of the operator \(\grads \). More precisely, when restricted to the space of rapidly decaying Schwartz functions, \(\grads\) is uniquely determined up to a multiplicative constant by the properties of rotational invariance, translational invariance, and \(s\)-homogeneity.

It is worth emphasizing that homogeneity properties of different order s such as \(s\)-homogeneity for the fractional gradient and \(2s\)-homogeneity for the fractional Laplacian had already been investigated in earlier analytical frameworks. In particular, such scaling and invariance features naturally appear in the classical theory of singular integrals, as documented for instance in the work of Stein~\cite{stein1970singular}. This provides a historical analytical context in which operators of fractional order were studied well before the formulation of the Riesz fractional gradient in its modern variational form.
	
	In this paper, we focus on a particular class of fractional \( p \)-Laplacian equations defined through the Riesz fractional gradient, as introduced by Shieh and Spector in their foundational work \cite{shieh2015new}. Specifically, we investigate the regularity of weak solutions to the following Dirichlet problem in a bounded Lipschitz domain \( \Omega \subset \mathbb{R}^N \) ($N \ge 1$):
	\begin{equation}
		\label{Regularidad:Problema Dirichlet}
		\left\{
		\begin{aligned}
			-\Delta^{s}_{p} u &= f \quad \text{ in } \; \Omega \\
			u &= 0 \quad \text{ in } \; \Omega^c
		\end{aligned}
		\right. 
	\end{equation}
	where \( p \in (1, \infty) \), \( s \in (0,1) \), and the operator \( -\Delta^{s}_{p} u \) is defined as
	\[
	-\Delta^{s}_{p} u := - \operatorname{div}_{s}\left(|\nabla^{s} u|^{p-2} \nabla^{s} u\right).
	\]
    Here, $\operatorname{div}_{s}$ and $\nabla^s$ denote, respectively, the divergence and gradient of order $s$; their definitions, along with those of the related function spaces, are given in Section \ref{sec:preliminaries}. A weak solution \( u \in \widetilde{X}^{s,p}(\Omega) \) to \eqref{Regularidad:Problema Dirichlet} is understood as a minimizer of the energy functional \( \mathcal{J} : \widetilde{X}^{s,p}(\Omega) \rightarrow \mathbb{R} \), given by
	\begin{equation}
		\label{Regularidad: Operadores Formulación Debil}
		\mathcal{J}(v) = \frac{1}{p}\int_{\mathbb{R}^N} \left| \nabla^{s} v \right|^p \, dx - \langle f, v \rangle,
	\end{equation}
for a given source term \( f \) belonging to a suitable dual space, typically \( X^{-s,p'}(\Omega) := [\widetilde{X}^{s,p}(\Omega)]' \), where $\langle \cdot, \cdot \rangle$ denotes the corresponding duality pairing. The well–posedness of this problem, in particular the existence and uniqueness of weak solutions, was established in \cite{shieh2015new} under the assumption \( f \in L^{p'}(\Omega) \). However, by adapting the arguments developed therein, one can extend the result and prove an analogous well–posedness statement for 
general right–hand sides \( f \in X^{-s,p'}(\Omega) \).

In the variable–diffusivity case, the existence theory has recently been addressed by García-Sáez in \cite{garcia2025fractional}. In that work, the author introduces a new family of weighted fractional Sobolev spaces \( X^{s,p}_{0,w}(\Omega) \), defined for Muckenhoupt weights $w$ and which extend the unweighted Lions–Calder\'on framework to the fractional setting, and concludes by establishing existence results for a class of degenerate fractional elliptic equations with variable diffusivity.

	The regularity theory for solutions of fractional PDEs is a cornerstone for a deeper understanding of their qualitative properties and for the development of accurate numerical methods. For the specific operator \( -\Delta^{s}_{p} \) defined via the Riesz fractional gradient, Schikorra, Shieh, and Spector \cite{schikorra2018regularity} made significant progress by demonstrating \( C_{\text{loc}}^{s+\alpha}(\Omega) \) regularity for solutions to the homogeneous problem. Their key insight was to show that the function \( v := I_{1-s}u \), where \( I_{1-s} \) denotes the Riesz potential of order \( 1-s \), solves an inhomogeneous classical \( p \)-Laplacian equation, thereby allowing them to leverage the established interior regularity theory for classical local operators.
	
	The primary objective of the present work is to extend the regularity analysis to 
    global
    estimates for weak solutions to \eqref{Regularidad:Problema Dirichlet}.
	%
    To achieve this, our methodology hinges on a powerful adaptation of Nirenberg's difference quotient technique developed by Savaré \cite{savare1998regularity} for proving regularity for classical elliptic equations in Lipschitz domains.
    This technique has been successfully adapted to the fractional setting in several recent contributions. 
    Reference
    \cite{borthagaray2023besov} employed this approach to establish Besov regularity for the Dirichlet problem associated with the integral fractional Laplacian in Lipschitz domains. Subsequently, 
    \cite{borthagaray2024quasi} extended the use of Savaré-type estimates to a broader class of quasi-linear fractional-order operators, also in Lipschitz domains. Inspired by these advancements, we adapt and apply the difference quotient machinery to the specific structure of the Riesz fractional \( p \)-Laplacian operator considered in \eqref{Regularidad:Problema Dirichlet}.

In a forthcoming work \cite{BDPR25+EF}, we analyze and implement a finite element method to approximate problem \eqref{Regularidad:Problema Dirichlet}. As usual in finite element analysis, the derivation of convergence rates for our scheme relies on the regularity estimates obtained throughout this paper.


The remainder of this paper is organized as follows. Section \ref{sec:preliminaries} collects preliminary material. There, we introduce the Riesz fractional gradient, describe its main analytical properties, and define the associated variational framework based on Lions--Calderón spaces. We also discuss their relation with complex interpolation theory. Furthermore, we recall the definition of Besov spaces and collect the functional analytic tools that will be required throughout the paper. In addition, we introduce a localized translation operator and establish a key identity for the gradient of such translations, which will play a fundamental role in the subsequent analysis.

In Section~\ref{sec:functionals} we introduce the notion of functional regularity.
This framework allows us to analyze the regularity properties of the energy functional \(\mathcal{J}\), which is a crucial step in the proof of our main results. Section~\ref{sec:regularity} is devoted to the presentation of the regularity results obtained for weak solutions of problem~\eqref{Regularidad:Problema Dirichlet}. We obtain
two distinct regularity regimes, depending on the interplay between the differentiability parameter \(s\) and the integrability exponent \(p\).
We first state the regularity estimates corresponding to the higher differentiability regime (see Theorem~\ref{teo-reg-caso-opt}) and afterwards, in Theorem~\ref{teo-reg-subopt}, we describe the regularity behavior in the complementary regime. Succinctly, we prove 
\[ \begin{split}
&u \in \dot{B}_{p,\infty}^{s+\min\left\{\frac{s}{p-1},\frac1p\right\}}(\Omega),  \quad p \ge 2, \\
&u \in \dot{B}_{p,\infty}^{s+\min\left\{s,\frac12\right\}}(\Omega),  \quad 1< p < 2,
\end{split} \]
 with quantitative bounds depending on the source data.

Finally, Section~\ref{sec:complementary} presents two variants of these results. Concretely, we comment on the presence of a variable diffusivity and on the derivation of intermediate regularity estimates under weaker assumptions on the data.

	\section{Preliminaries} \label{sec:preliminaries}
	
	In this section, we introduce the notation, the functional setting, and several auxiliary results that will be fundamental in our analysis. We begin by defining the relevant function spaces and their norms, with a particular emphasis on Besov and Lions-Calderón spaces. We then present alternative characterizations of Besov spaces based on translation operators and difference quotients, which will be essential for quantifying regularity. Finally, we explore how these translation-based descriptions relate to the smoothness of functions and the behavior of solutions to the nonlocal problem under consideration.

	\subsection{Fractional calculus operators and Lions-Calder\'on spaces}
	 
	We now define the fractional gradient, the associated variational spaces, and the connection between these spaces and the Bessel potential spaces. This relationship will provide fundamental properties that shall be used throughout the article.
	
\begin{definition}
	Let \(s \in (0,1)\), \(f \in C_{c}^{\infty}\left(\mathbb{R}^{N}\right)\), and \(\Phi \in C_{c}^{\infty}\left(\mathbb{R}^{N}; \mathbb{R}^{N}\right)\). The \emph{Riesz fractional gradient} of $f$ of order $s$ is defined by
    \begin{equation} \label{eq:def-grads}
		\nabla^s f(x) \coloneq \mu(N, s) \int_{\mathbb{R}^{N}} \frac{f(x) - f(y)}{|y - x|^{N + s }}\, \frac{x - y}{|x-y|} \, dy,
	\end{equation}
	and, correspondingly, the \emph{Riesz fractional divergence} of $\Phi$ of order $s$ is defined by
\[
\operatorname{div}_{s} \Phi(x) \coloneq \mu(N, s) \int_{\mathbb{R}^{N}} \frac{\Phi(x) - \Phi(y)}{|x - y|^{N + s}} \cdot \frac{x - y}{|x-y|} \, dy.
	\]
   
    In both definitions, we have taken
    \[
	\mu(N, s) \coloneq \frac{2^{s} \Gamma\left(\frac{N + s + 1}{2}\right)}{\pi^{N / 2} \Gamma\left(\frac{1 - s}{2}\right)}.
	\]
\end{definition}

We observe that the constant \( \mu(N,s) \) satisfies the scaling inequality
\begin{equation} \label{eq:scaling-mu}
	0 < C_1(N) \le \frac{\mu(N,s)}{1 - s} \le C_2(N),
\end{equation}
for  \( C_1(N), \, C_2(N) > 0 \) depending only on the dimension.

The preceding definitions can be extended to locally integrable functions with a finite $W^{s,1}(\mathbb{R}^N)$ seminorms, cf. \cite[Lemma 2.1]{comi2019distributional}.

\begin{remark}
We note that, for $s=0$ and $j = 1 , \ldots, N$, definition \eqref{eq:def-grads} 
formally yields   
\[ \begin{split}
\left[\nabla^0 f(x)\right]_j & = \mu(N,0) \, \lim_{\varepsilon \to 0} \int_{\mathbb{R}^{N}\setminus B_\varepsilon (x)} \frac{f(y) - f(x)}{|y - x|^{N}}\frac{y_j - x_j}{|x-y|}\, dy \\
& = \mu(N,0) \, \lim_{\varepsilon \to 0} \int_{\mathbb{R}^{N}\setminus B_\varepsilon (0)} \frac{z_j \, f(x-z) }{|z|^{N + 1}}\, dz =: R_j f (x),
\end{split} 
\]
the $j$-th Riesz transform (cf. \cite[Chapter III]{stein1970singular}). In this sense, we expect that, for $s \in (0,1)$ the  fractional gradients of order $s$ interpolate between the vector-valued Riesz transform and the classical gradient (see Lemma \ref{Regularidad: Cota gradiente s de función regular}
and Remark \ref{rem:asymptotics-gradient} below).
\end{remark}

The following integration by parts formula is proven in \cite[Section 2.4]{comi2019distributional}.
We denote by $\mbox{Lip}_{c}$ the class of Lipschitz functions with bounded, compact support.

\begin{proposition}
	\label{Dualidad Gradiente y divergencia}
	Let \(s \in (0,1)\), 
     \(\varphi \in \mbox{Lip}_{c}\left(\mathbb{R}^{N}\right)\), and \(\Phi \in \mbox{Lip}_{c}\left(\mathbb{R}^{N}; \mathbb{R}^{N}\right)\).
    Then,
	\[
	\int_{\mathbb{R}^{N}} \varphi(x) \operatorname{div}_{s} \Phi(x)\, dx = - \int_{\mathbb{R}^{N}} \Phi(x) \cdot \nabla^{s} \varphi(x)\, dx.
	\]
\end{proposition}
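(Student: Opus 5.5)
The plan is to prove the identity first for the truncated operators and then pass to the limit, using Fubini and an antisymmetry argument. For \(\varepsilon>0\) define
\[
\nabla^s_\varepsilon \varphi(x) := \mu(N,s)\int_{|x-y|>\varepsilon} \frac{\varphi(x)-\varphi(y)}{|x-y|^{N+s}}\,\frac{x-y}{|x-y|}\,dy,
\]
and let \(\operatorname{div}_{s,\varepsilon}\Phi\) be the analogous truncation of the divergence. Since \(\varphi\) and \(\Phi\) are Lipschitz with compact support, the integrand is bounded by \(C\min\{L|x-y|,2\|\varphi\|_\infty\}|x-y|^{-N-s}\). This bound is integrable in \(y\) near the diagonal because \(s<1\), and at infinity because \(s>0\).

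\textbf{Step 1: convergence of the truncations.} The bound above is uniform in \(x\). It gives \(\nabla^s_\varepsilon\varphi\to\nabla^s\varphi\) pointwise, and also uniformly, since the tail satisfies \(\int_{|z|<\varepsilon} L|z|^{1-N-s}\,dz\lesssim \varepsilon^{1-s}\). The limits \(\nabla^s\varphi\) and \(\operatorname{div}_s\Phi\) decay like \(|x|^{-N-s}\) away from the supports. For \(x\) outside a large ball containing the support, only the term with \(y\) in the support contributes, and it is bounded by \(C|x|^{-N-s}\). This decay is integrable, so the products \(\Phi\cdot\nabla^s\varphi\) and \(\varphi\operatorname{div}_s\Phi\) lie in \(L^1\). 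Compact support of \(\Phi\) and \(\varphi\), together with dominated convergence, then lets us pass to the limit in both integrals.

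\textbf{Step 2: the truncated identity.} For fixed \(\varepsilon\), the double integral
\[
\iint_{|x-y|>\varepsilon} \frac{\varphi(x)\Phi(x)-\varphi(y)\Phi(x)}{|x-y|^{N+s+1}}\cdot(x-y)\,dy\,dx
\]
converges absolutely. Each term is bounded by \(\varepsilon^{-N-s}\) times compactly supported factors, and the \(\varphi(y)\Phi(x)\) term decays in \(y\) since \(\Phi(x)\) is compactly supported in \(x\). Because the kernel \(k(x-y)=(x-y)/|x-y|^{N+s+1}\) is odd, the term \(\iint_{|x-y|>\varepsilon}\varphi(x)\Phi(x)\cdot k(x-y)\,dy\,dx\) vanishes: for each \(x\) the inner integral is zero by symmetry of the annulus region.

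Hence \(\int \Phi\cdot\nabla^s_\varepsilon\varphi\,dx = -\mu\iint \varphi(y)\Phi(x)\cdot k(x-y)\,dy\,dx\). The same reasoning, with the roles of \(\varphi\) and \(\Phi\) exchanged, gives \(\int\varphi\operatorname{div}_{s,\varepsilon}\Phi\,dx = -\mu\iint \varphi(x)\Phi(y)\cdot k(x-y)\,dy\,dx\). Swapping \(x\leftrightarrow y\) in this last integral, which Fubini allows, and using oddness of \(k\), we obtain
\[
\int\varphi\operatorname{div}_{s,\varepsilon}\Phi\,dx = -\int\Phi\cdot\nabla^s_\varepsilon\varphi\,dx.
\]

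\textbf{Step 3: passing to the limit.} Letting \(\varepsilon\to0\) via Step 1 yields the claimed formula.

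The main obstacle is the justification of absolute integrability in Step 2. The splitting of the difference \(\varphi(x)-\varphi(y)\) creates terms that are individually non-integrable at the diagonal, which is exactly why the truncation is needed. It is also why the cancellation must be performed at fixed \(\varepsilon\) rather than directly on the principal-value-free integrals.
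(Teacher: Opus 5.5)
Your proof is correct. The paper gives no argument of its own here and simply refers to \cite[Section 2.4]{comi2019distributional}, so there is no in-paper proof to follow or contradict.

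You work directly from the integral definitions. You truncate at $|x-y|>\varepsilon$ and split $\varphi(x)-\varphi(y)$, which is legitimate only at fixed $\varepsilon$. The diagonal term vanishes by oddness of $k(z)=z/|z|^{N+s+1}$ on the symmetric set $\{|z|>\varepsilon\}$. You then swap $x\leftrightarrow y$ and remove the truncation using the uniform bound $|\nabla^s\varphi-\nabla^s_\varepsilon\varphi|\lesssim L\varepsilon^{1-s}/(1-s)$, and the analogous bound for $\operatorname{div}_s$, against compactly supported bounded factors.

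The other standard route in the Shieh--Spector framework goes through the Riesz potential. One first establishes $\nabla^s\varphi=\nabla I_{1-s}\varphi=I_{1-s}\nabla\varphi$ and $\operatorname{div}_s\Phi=I_{1-s}\operatorname{div}\Phi$ for Lipschitz, compactly supported data. The identity then follows from the symmetry of $I_{1-s}$ (Fubini with a locally integrable kernel) and classical integration by parts. That route makes the structure $\nabla^s=\nabla I_{1-s}$ explicit, which is also what underlies the Schikorra--Shieh--Spector reduction mentioned in the introduction. Its cost is that the potential representation must be proved first, and that is itself a truncation computation.

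Your route needs nothing beyond the definitions as stated in the paper. It also extends essentially verbatim to $\varphi\in W^{s,1}(\mathbb{R}^N)$: on the gradient side you replace uniform convergence by dominated convergence on $\mathbb{R}^{2N}$, using $\iint|\Phi(x)||\varphi(x)-\varphi(y)||x-y|^{-N-s}\,dy\,dx<\infty$.

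Three small corrections:
\begin{enumerate}
\item In Step 2 your integrability justification has the two terms swapped. The $\varphi(y)\Phi(x)$ term is supported in a compact subset of $\mathbb{R}^{2N}$ and is bounded by $\varepsilon^{-N-s}\|\varphi\|_{L^1}\|\Phi\|_{L^1}$. The $\varphi(x)\Phi(x)$ term is compactly supported only in $x$, so it needs $\int_{|z|>\varepsilon}|z|^{-N-s}\,dz<\infty$, i.e.\ $s>0$.
\item The truncation region is the exterior of a ball, not an annulus.
\item The $|x|^{-N-s}$ decay discussion in Step 1 is unnecessary, since both limits are taken against compactly supported bounded functions and the convergence is uniform.
\end{enumerate}
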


Based on the fractional gradient, we introduce the associated variational space. Later, we will explore its relation with fractional Sobolev spaces.

\begin{definition}
	\label{Espacios Lions-Calderón}
	Let \( s \in (0,1) \) and \( p \in [1, \infty) \). The \emph{Lions–Calderón space} is defined as
	\[
	X^{s, p}(\mathbb{R}^{N}) \coloneq \left\{ f \in L^{p}(\mathbb{R}^{N}) : \nabla^{s} f \in L^{p}(\mathbb{R}^{N}; \mathbb{R}^{N}) \right\},
	\]
	endowed with the norm
	\[
	\|f\|_{X^{s, p}(\mathbb{R}^{N})} \coloneq \left( \|f\|_{L^{p}(\mathbb{R}^{N})}^{p} + \|\nabla^{s} f\|_{L^{p}(\mathbb{R}^{N}; \mathbb{R}^{N})}^{p} \right)^{1/p}.
	\]
\end{definition}

We now introduce the Lions–Calderón spaces on open domains.

\begin{definition} \label{def:Lions-Calderon-domain}
    Let \( \Omega \subset \mathbb{R}^{N}\) be a Lipschitz domain, \(s \in (0,1)\), and \(p \in [1, \infty)\).  
    The Lions–Calderón space on \(\Omega\) is defined as the quotient space of all functions in \( X^{s,p}(\mathbb{R}^{N}) \) that agree almost everywhere on \(\Omega\), endowed with the norm
    \[
        \|v \|_{X^{s,p} (\Omega)} \coloneq \inf \bigl\{ \|w\|_{X^{s,p}(\mathbb{R}^{N})} : w|_{\Omega} = v \bigr\}.
    \]
\end{definition}

In particular,  we consider the subspace of functions with vanishing exterior trace:
\begin{equation}
	\label{Regularidad: Espacios lions calderon tilde}
	\widetilde{X}^{s, p}(\Omega) \coloneq \left\{ w \in X^{s, p}(\mathbb{R}^N) : w = 0 \text{ in } \Omega^c := \mathbb{R}^N \setminus \Omega \right\}.
\end{equation}
Owing to a Poincaré-type inequality (see \cite[Theorem 2.9]{bellido2020gammaconvergencepolyconvexfunctionalsinvolving}), the space \( \widetilde{X}^{s,p}(\Omega) \) becomes a Banach space when endowed with
\[
\|f\|_{\widetilde{X}^{s, p}(\Omega)} \coloneq \|\nabla^{s} f\|_{L^{p}(\mathbb{R}^{N}; \mathbb{R}^{N})}.
\]

The dual of this space is denoted by
\[
X^{-s, p'}(\Omega) \coloneq \left( \widetilde{X}^{s, p}(\Omega) \right)'.
\]

As we shall comment below, the Lions-Calder\'on spaces coincide with Bessel potential spaces which, in turn, are defined through complex interpolation between Sobolev spaces of integer order of differentiability. To further motivate this result,
we now introduce a pointwise bound for the fractional gradient of Lipschitz continuous functions, which will be used in subsequent results, and that is in the spirit of an interpolation inequality.
	\begin{Lemma}
		\label{Regularidad: Cota gradiente s de función regular}
		Let \( \phi \in W^{1,\infty}(\mathbb{R}^N) \) and \( s \in (0,1) \). Then, the \( s \)-gradient of $\phi$ satisfies the pointwise bound
		\begin{equation*}
			|\nabla^s \phi(x)| \leq 
			C \left(\frac{1-s}{s}\right)^{1-s} \|\phi\|_{L^\infty(\mathbb{R}^N)}^{1-s}
			\|\nabla\phi\|_{L^\infty(\mathbb{R}^N)}^{s},
		\end{equation*}
		for all  \( x \in \mathbb{R}^N \), with a constant  that only depends on $N$.
	\end{Lemma}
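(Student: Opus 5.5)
We split the integral defining $\nabla^s\phi(x)$ in \eqref{eq:def-grads} into a near part over $B_R(x)$ and a far part over $\mathbb{R}^N\setminus B_R(x)$. The radius $R>0$ is free and will be optimized at the end. Writing $y=x+h$, we have
\[
|\nabla^s\phi(x)| \le \mu(N,s)\left( \int_{B_R(0)} \frac{|\phi(x)-\phi(x+h)|}{|h|^{N+s}}\,dh + \int_{\mathbb{R}^N\setminus B_R(0)} \frac{|\phi(x)-\phi(x+h)|}{|h|^{N+s}}\,dh\right).
\]
On the near part we use the Lipschitz bound $|\phi(x)-\phi(x+h)|\le \|\nabla\phi\|_{L^\infty}|h|$, valid for $W^{1,\infty}$ functions after choosing the Lipschitz representative. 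In polar coordinates this gives
\[
\|\nabla\phi\|_{L^\infty}\,|\mathbb{S}^{N-1}|\int_0^R r^{-s}\,dr = |\mathbb{S}^{N-1}|\,\|\nabla\phi\|_{L^\infty}\frac{R^{1-s}}{1-s}.
\]
On the far part we use $|\phi(x)-\phi(x+h)|\le 2\|\phi\|_{L^\infty}$, which gives $2|\mathbb{S}^{N-1}|\,\|\phi\|_{L^\infty} R^{-s}/s$. Both integrals converge absolutely, so the principal value plays no role here.

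Next we invoke \eqref{eq:scaling-mu}, namely $\mu(N,s)\le C_2(N)(1-s)$. The factor $1-s$ cancels the $(1-s)^{-1}$ coming from the near part, and we obtain
\[
|\nabla^s\phi(x)|\le C(N)\Big( \|\nabla\phi\|_{L^\infty} R^{1-s} + \frac{1-s}{s}\,\|\phi\|_{L^\infty} R^{-s}\Big).
\]
We then choose $R$ to balance the two terms, taking
\[
R=\frac{1-s}{s}\,\frac{\|\phi\|_{L^\infty}}{\|\nabla\phi\|_{L^\infty}}.
\]
If $\nabla\phi\equiv 0$, then $\phi$ is constant, $\nabla^s\phi=0$, and the claim is trivial; the same holds if $\phi\equiv0$. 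With this choice each term equals
\[
\left(\frac{1-s}{s}\right)^{1-s}\|\phi\|_{L^\infty}^{1-s}\|\nabla\phi\|_{L^\infty}^{s},
\]
which is exactly the claimed bound with $C=2C(N)$.

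The only delicate point is getting the constant's exact dependence on $s$. Without using \eqref{eq:scaling-mu}, the near part would carry a factor $(1-s)^{-1}$ that blows up as $s\to1$. Once that factor is absorbed, the ratio $\frac{1-s}{s}$ left over from the far part fixes the optimal $R$ and produces the stated power $\left(\frac{1-s}{s}\right)^{1-s}$. Everything else is routine.
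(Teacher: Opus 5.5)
Your proof is correct and follows essentially the paper's argument: you split at radius $R$, use the Lipschitz bound on the near part and the $L^\infty$ bound on the far part, then optimize in $R$ together with \eqref{eq:scaling-mu}. The only cosmetic difference is that you apply $\mu(N,s)\lesssim 1-s$ before choosing $R$. This lets you take $R=\frac{1-s}{s}\,\|\phi\|_{L^\infty}/\|\nabla\phi\|_{L^\infty}$ without the dimensional constants the paper carries in its choice.
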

	
	\begin{proof}
		We recall that the fractional gradient is defined by \eqref{eq:def-grads}, 
		\[
		\nabla^s \phi(x) = \mu(N,s) \int_{\mathbb{R}^N} \frac{(\phi(x) - \phi(y))(x - y)}{|x - y|^{N + s + 1}} \,dy.
		\]
		If $\phi$ is a constant function, then there is nothing to be proven. We thus assume $\| \nabla \phi \|_{L^\infty(\mathbb{R}^N)} > 0$.
		To estimate the fractional gradient of $\phi$, we let $R>0$ to be chosen and split the integral into two regions: a ball \( D_R(x) \) centered at \( x \) with radius $R$, and its complement \( D_R^c(x):= \mathbb{R}^N \setminus D_R(x) \).
		
		We estimate in  \( D_R(x) \) first. Using the Lipschitz continuity of \( \phi \) and integration in polar coordinates, we obtain:
		\begin{align*}
			\left| \int_{D_R(x)} \frac{(\phi(x) - \phi(y))(x - y)}{|x - y|^{N + s + 1}} \,dy \right|
			&\leq \|\nabla \phi\|_{L^\infty(\mathbb{R}^N)} \int_{D_R(x)} \frac{1}{|x - y|^{N + s - 1}} \,dy \\
			&\leq \frac{C_1(N) R^{1-s} \, \|\nabla \phi\|_{L^\infty(\mathbb{R}^N)}}{1-s}.
		\end{align*}
		
		Next, in \( D_R^c(x) \), we use the boundedness of \( \phi \) and again integrate in polar coordinates to obtain
		\begin{align*}
			\left| \int_{D_R^c(x)} \frac{(\phi(x) - \phi(y))(x - y)}{|x - y|^{N + s + 1}} \,dy \right|
			&\leq 2 \|\phi\|_{L^\infty(\mathbb{R}^N)} \int_{D_R^c(x)} \frac{1}{|x - y|^{N + s}} \,dy \\
			&\leq \frac{C_2(N)\|\phi\|_{L^\infty(\mathbb{R}^N)}}{s R^s}.
		\end{align*}
		
		Combining both contributions, choosing $R = \frac{C_2 (1-s) \|\phi\|_{L^\infty(\mathbb{R}^N)}}{C_1 s \|\nabla \phi\|_{L^\infty(\mathbb{R}^N)}}$, and property \eqref{eq:scaling-mu}, we conclude that:
		\[ \begin{split}
			|\nabla^s \phi(x)| & \leq \mu(N,s) \frac{C_1^s C_2^{1-s}}{(1-s)^s s^{1-s}} \, \|\phi\|_{L^\infty(\mathbb{R}^N)}^{1-s}
			\|\nabla\phi\|_{L^\infty(\mathbb{R}^N)}^{s} \\ &\le C(N) \left(\frac{1-s}{s}\right)^{1-s} \|\phi\|_{L^\infty(\mathbb{R}^N)}^{1-s}
			\|\nabla\phi\|_{L^\infty(\mathbb{R}^N)}^{s},
		\end{split} \]
		where $C$ is a constant that only depends on $N$.
	\end{proof}
	
\begin{remark} \label{rem:asymptotics-gradient}
	 The previous result remains asymptotically valid as \( s \to 0 \) or \( s \to 1 \). Indeed, for \( s = 1 \), we recover a trivial bound for the classical gradient while, for \( s = 0 \), the fractional gradients recover the vector-valued Riesz transform, which is not bounded in \( L^\infty(\mathbb{R}^N)\).
\end{remark}

Next, we make the relationship between fractional gradients and the complex interpolation more explicit. For this purpose, we introduce the Bessel potential spaces.


\begin{definition}
	Let \( p \in [1, \infty) \) and \( \alpha \in \mathbb{R} \). The Bessel potential space $\Lambda^{\alpha, p}(\mathbb{R}^n)$ is defined by
	\[
	\Lambda^{\alpha, p}(\mathbb{R}^n) \coloneq \left\{ u \in L^p(\mathbb{R}^n) : \mathcal{F}^{-1}\left[ (1+|\xi|^2)^{\frac{\alpha}{2}} \mathcal{F} u \right] \in L^p(\mathbb{R}^n) \right\},
	\]
	where \( \mathcal{F} \) denotes the Fourier transform and \( \mathcal{F}^{-1} \) its inverse. The norm on \( \Lambda^{\alpha, p}(\mathbb{R}^n) \) is given by
	\[
	\|u\|_{\Lambda^{\alpha, p}(\mathbb{R}^n)} \coloneq \left\| \mathcal{F}^{-1}\left[ (1+|\xi|^2)^{\frac{s}{2}} \mathcal{F} u \right] \right\|_{L^p(\mathbb{R}^n)}.
	\]
	For an open set \( \Omega \subset \mathbb{R}^n \),  we define
	\[
	\Lambda^{\alpha, p}(\Omega) \coloneq \left\{ u|_{\Omega} : u \in \Lambda^{\alpha, p}(\mathbb{R}^n) \right\},
	\]
	with the norm
	\[
	\|u\|_{\Lambda^{\alpha, p}(\Omega)} \coloneq \inf \left\{ \|v\|_{\Lambda^{\alpha, p}(\mathbb{R}^n)} : v|_{\Omega} = u \right\}.
	\]
\end{definition}

These spaces can be characterized via complex interpolation theory; we briefly recall this construction and refer to \cite[Theorem 3.7]{bellido2025bessel} for further details. The complex interpolation method relies on the theory of holomorphic vector-valued functions. Given a compatible couple of Banach spaces \( \overline{E} = (E_0, E_1) \), we consider the closed strip
\[
S \coloneq \{ z \in \mathbb{C} : 0 \leq \operatorname{Re} z \leq 1 \}.
\]
We define the space \( \mathfrak{F}(E_0, E_1) \) as the set of functions \( f: S \to E_0 + E_1 \) such that:

\begin{itemize}
	\item \( f \) is holomorphic in \( \operatorname{int}(S) \) and continuous and bounded on \( S \),
	\item for \( j = 0, 1 \), the function \( t \mapsto f(j + i t) \) is continuous from \( \mathbb{R} \) into \( E_j \), and satisfies
	\[
	\|f(j + i t)\|_{E_j} \to 0 \quad \text{as } |t| \to \infty.
	\]
\end{itemize}

Clearly, \( \mathfrak{F}(E_0, E_1) \) is a vector space. Moreover, equipped with the norm
\[
\|f\|_{\mathfrak{F}(\overline{E})} \coloneq \max \left\{ \sup_{t \in \mathbb{R}} \|f(i t)\|_{E_0}, \sup_{t \in \mathbb{R}} \|f(1 + i t)\|_{E_1} \right\},
\]
it becomes a Banach space. For each \( \theta \in [0, 1] \), the interpolation space \( [E_0, E_1]_\theta \) is then defined as the set of all \( x \in E_0 + E_1 \) such that there exists \( f \in \mathfrak{F}(\overline{E}) \) with \( f(\theta) = x \), and is endowed with the norm
\[
\|x\|_\theta \coloneq \inf \left\{ \|f\|_{\mathfrak{F}(\overline{E})} : f \in \mathfrak{F}(\overline{E}), \ f(\theta) = x \right\}.
\]

Let \( s \in (0,1) \) and \( p \in [1, \infty] \). Choosing \( E_0 = L^p(\Omega) \) and \( E_1 = W^{1,p}(\Omega) \), one obtains
\[
\Lambda^{s,p}(\Omega) \coloneq \left(L^{p}(\Omega), W^{1,p}(\Omega)\right)_{s}.
\]

In general, for \( s \in (0,1) \) and \( p \in (1, \infty) \), we have the identification
\[
X^{s,p}(\Omega) = \Lambda^{s,p}(\Omega)
\]
with equivalent norms, as shown, for example, in \cite[Proposition 2.4.3]{campos2021lions}.

\subsection{The Bessel $(p,s)$-Laplacian} \label{sec:(p,s)-Lap}
We now recall the quasilinear Dirichlet-type problem that will be the focus of our study: a fractional version of the $p$-Laplacian built from the Riesz fractional gradient. Let \( \Omega \subset \mathbb{R}^d \) be a bounded Lipschitz domain, and fix \( p \in (1, \infty) \), \( s \in (0,1) \). Given a source term \( f \in X^{-s, p'}(\Omega) \), we consider the nonlinear problem \eqref{Regularidad:Problema Dirichlet}. 
It
represents the strong-form Euler–Lagrange equation associated with the functional \eqref{Regularidad: Operadores Formulación Debil}. More precisely, one seeks minimizers of the functional $\mathcal{J} \colon \widetilde{X}^{s,p}(\Omega) \to \mathbb{R}$,

\begin{equation} \label{eq:functionals} 
\begin{split}
& \mathcal{J}(w) \coloneq \mathcal{J}_1(w) - \mathcal{J}_2(w),
\quad \text{where} \\
& \mathcal{J}_1(w) \coloneq \frac{1}{p} \int_{\mathbb{R}^d} \left|\nabla^s w\right|^p\,dx, 
\quad
\mathcal{J}_2(w) \coloneq \langle f, w \rangle.
\end{split}
\end{equation}
Thanks to the Poincaré-type inequality from~\cite[Theorem 2.9]{bellido2020gammaconvergencepolyconvexfunctionalsinvolving}, \( \mathcal{J}_1 \) defines a norm on $\widetilde{X}^{s,p}$ and is weakly lower semicontinuous, ensuring the existence of minimizers by the direct method in the calculus of variations.

When one computes the first variation at such a minimizer $u \in \widetilde{X}^{s, p}(\Omega)$, obtains the variational formulation
\begin{equation} \label{eq:weak-formulation}
\left\langle \left| \nabla^s u \right|^{p-2} \nabla^s u, \nabla^s v \right\rangle = \langle f, v \rangle \quad \text{for all } v \in \widetilde{X}^{s, p}(\Omega).    
\end{equation}

From this notion of weak solution, provided $u$ is regular enough, one can perform the integration by parts result provided by Proposition~\ref{Dualidad Gradiente y divergencia} to obtain the strong formulation from above.



\begin{remark}
	\label{rem: coercividad}
For the $p$-Laplacian-type operator under consideration, it is clear that coercivity holds when \( p \geq 2 \), while for \( 1<p<2 \) one only obtains $2$-coercivity on bounded sets. As in the case of the classical $p$-Laplacian,
these properties follow from an application of Hölder's inequality together with a few elementary inequalities in \( \mathbb{R}^N \). 
In addition, one can verify by similar arguments that the operator is continuous and monotone.  For a detailed proof of these facts,
we refer to \cite[Section~5.1]{glowinski1975approximation}; see also \cite[Example 3]{savare1998regularity}.
\end{remark}

\subsection{Relationship with the fractional Sobolev and Besov spaces}
In this section, we explore the connection between Lions–Calderón spaces and other intermediate spaces between integer-order Sobolev spaces. Concretely, we focus on spaces obtained by the real interpolation method: the fractional Sobolev (or Gagliardo-Slobodeckiĭ) or, 
more generally, Besov spaces.

	We now introduce the latter by real interpolation, as presented in \cite[Chapter~22]{tartar2007introduction}.
    Specifically, given a compatible pair of Banach spaces \( (X_0, X_1) \) and \( u \in X_0 + X_1 \), the Peetre \( K \)-functional is defined for \( t > 0 \) by
	\[
	K(t, u) \coloneq \inf \left\{  \|u_0\|_{X_0} + t\|u_1\|_{X_1}  : u = u_0 + u_1,\ u_0 \in X_0,\ u_1 \in X_1 \right\}.
	\]  
	
	For \( \theta \in (0,1) \) and \( q \in [1, \infty] \), the interpolation space is defined as  
	\[
	(X_0, X_1)_{\theta, q} \coloneq \left\{ u \in X_0 + X_1 : \|u\|_{(X_0, X_1)_{\theta, q}} < \infty \right\},
	\]  
	with the norm  
	\[
	\|u\|_{(X_0, X_1)_{\theta, q}} =
    	\left\{
	\begin{aligned}
		&\left(  \int_{0}^{\infty} (t^{-\theta} K(t,u))^{q}\, dt \right)^{1/q},  &\text{if } 1 \leq q < \infty, \\
		&\sup_{t > 0} \, t^{-\theta} K(t,u), &\text{ if  } q = \infty.
	\end{aligned}
    \right.
	\]  
	
	The Besov spaces in which we are interested in this work are defined by choosing \( X_0 = L^p(\Omega) \) and \( X_1 = W^{2,p}(\Omega) \), and considering \( \sigma \in (0,2) \), and \( p, q \in [1, \infty] \):
	\[
	B_{p, q}^{\sigma}(\Omega)  \coloneq \left(L^{p}(\Omega), W^{2,p}(\Omega)\right)_{\sigma / 2, q}, \quad
	\dot{B}_{p, q}^{\sigma}(\Omega)\coloneq \left\{ v \in B_{p, q}^{\sigma}(\mathbb{R}^{N}) : \operatorname{supp} v \subset \overline{\Omega} \right\}.
	\]
	For \( \sigma \in (0,1) \), an equivalent definition is  
	\[
	\dot{B}_{p, q}^{\sigma}(\Omega) = \left(L^{p}(\Omega), 
    W^{1,p}_0
    (\Omega)\right)_{\sigma, q}.
	\]  
    	Additionally, we define negative-order Besov spaces as 
	\[
	B_{p, q}^{-\sigma}(\Omega) \coloneq \left(L^{p}(\Omega), W^{-1,p}(\Omega)\right)_{\sigma, q}.
	\]  
	
	Two basic embeddings for Besov spaces on Lipschitz domains will be used later (cf. \cite[sections 3.2.4 and 3.3.1]             {triebel2010theory}): 
    \begin{align}
        B_{p, q_0}^{\sigma}(\Omega) \subset B_{p, q_1}^{\sigma}(\Omega), & \quad \text{if } \sigma > 0,\ 1 \leq p \leq \infty,\ 1 \leq q_0 \leq q_1 \leq \infty, \\ \label{eq: embedding besov}
		B_{p, q_1}^{\sigma_1}(\Omega) \subset B_{p, q_0}^{\sigma_0}(\Omega), & \quad \text{if } 0 < \sigma_0 < \sigma_1,\ 1 \leq p \leq q_0 \leq q_1 \leq \infty.
    \end{align}
	
	The following results are derived from the relationship between real and complex interpolation spaces; see \cite[Theorem 3.1]{peetre1969transformation}.
	\begin{Lemma}
		\label{Regularidad: Inmersión Lions Calderon Besov}
		Let \( \Omega \subset \mathbb{R}^d \) be a bounded Lipschitz domain, \( p \in [1, \infty) \), \( \sigma \in (0,1) \), and \( \varepsilon \in (0,1 - \sigma) \). Then,
		\[
		B_{p, \infty}^{\sigma + \varepsilon}(\Omega) \subset X^{\sigma,p}(\Omega),
		\]
		and there exists a constant \( C > 0 \), depending on \( \sigma \), \( \varepsilon \), and \( p \), such that
		\[
		\|v\|_{X^{\sigma,p}(\Omega)} \leq C \|v\|_{B_{p, \infty}^{\sigma + \varepsilon}(\Omega)} \quad \text{for all } v \in B_{p, \infty}^{\sigma + \varepsilon}(\Omega).
		\]
	\end{Lemma}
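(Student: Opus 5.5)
The plan is to pass from $B^{\sigma+\varepsilon}_{p,\infty}$ to $X^{\sigma,p}$ through the Bessel space, using $X^{\sigma,p}(\Omega)=\Lambda^{\sigma,p}(\Omega)$ from \cite[Proposition 2.4.3]{campos2021lions}. This reduces the claim to the embedding $B^{\sigma+\varepsilon}_{p,\infty}(\Omega)\subset \Lambda^{\sigma,p}(\Omega)$ with a norm bound. (For $p=1$ the identification with $\Lambda^{\sigma,1}$ is not available as stated; in that case I would argue directly on $\mathbb{R}^N$, as explained at the end.)

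Since both spaces on $\Omega$ carry quotient norms, I first work on $\mathbb{R}^N$. Take $v\in B^{\sigma+\varepsilon}_{p,\infty}(\Omega)$. Using a universal bounded linear extension operator for Lipschitz domains (Rychkov/Stein type, valid for Besov spaces), I obtain $Ev\in B^{\sigma+\varepsilon}_{p,\infty}(\mathbb{R}^N)$ with $\|Ev\|\le C\|v\|$. On $\mathbb{R}^N$ I then use the interpolation comparison of \cite[Theorem 3.1]{peetre1969transformation}. With $\theta=\sigma+\varepsilon$, $\theta_0=\sigma$ and $X_0=L^p$, $X_1=W^{1,p}$, one has
\[
(X_0,X_1)_{\theta,\infty}\subset (X_0,X_1)_{\sigma,1}\subset [X_0,X_1]_{\sigma}.
\]
The first inclusion holds because $\theta>\sigma$: apply the reiteration/monotonicity estimate $t^{-\sigma}K(t,u)\le \min\{t^{\theta-\sigma},t^{-\sigma}\}\,\|u\|_{\theta,\infty}$, using $K(t,u)\le \|u\|_{X_0}$ for large $t$, and integrate in $dt/t$. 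The resulting constant is of size $C(\varepsilon^{-1}+\sigma^{-1})$. The second inclusion is the standard fact that the real $(\cdot,1)$ method embeds into the complex method. The remaining identification is that the real interpolation between $L^p$ and $W^{2,p}$ at $\sigma/2$ agrees with that between $L^p$ and $W^{1,p}$ at $\sigma$. This follows by reiteration, since $W^{1,p}=[L^p,W^{2,p}]_{1/2}$ and the scale is of class $1/2$.

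Restricting back to $\Omega$ gives
\[
\|v\|_{\Lambda^{\sigma,p}(\Omega)}\le\|Ev\|_{\Lambda^{\sigma,p}(\mathbb{R}^N)}\le C\|Ev\|_{B^{\sigma+\varepsilon}_{p,\infty}(\mathbb{R}^N)}\le C\|v\|_{B^{\sigma+\varepsilon}_{p,\infty}(\Omega)},
\]
and the norm equivalence with $X^{\sigma,p}(\Omega)$ concludes.

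The main obstacle is the endpoint $p=1$, where the complex spaces $\Lambda^{\sigma,1}$ and the equivalence $X=\Lambda$ are delicate. There I would instead bound $\|\nabla^\sigma w\|_{L^1}$ directly from definition \eqref{eq:def-grads} by $\mu(N,\sigma)\int\!\!\int |w(x)-w(y)|\,|x-y|^{-N-\sigma}\,dx\,dy$, which is the $W^{\sigma,1}=B^\sigma_{1,1}$ seminorm. I would then use $B^{\sigma+\varepsilon}_{1,\infty}\subset B^{\sigma}_{1,1}$, which follows from the same dyadic splitting of the modulus of continuity: $\omega(h)\lesssim |h|^{\sigma+\varepsilon}$ for small $h$ and $\omega(h)\le 2\|w\|$ for large $h$. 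In fact this direct Gagliardo-type argument works for all $p$, giving $X^{\sigma,p}\supset W^{\sigma+\varepsilon',p}$ via the Riesz-kernel representation. I would keep it as a fallback if the complex-interpolation route proves too technical.
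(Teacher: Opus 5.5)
Your argument is correct and follows the route the paper indicates. The paper combines Peetre's comparison of real and complex interpolation, $(L^p,W^{1,p})_{\sigma+\varepsilon,\infty}\subset(L^p,W^{1,p})_{\sigma,1}\subset[L^p,W^{1,p}]_\sigma$, with $X^{\sigma,p}=\Lambda^{\sigma,p}$, and your extension and reiteration steps only fill in details it leaves implicit. Your separate $p=1$ argument, which bounds $\|\nabla^\sigma w\|_{L^1}$ by the $W^{\sigma,1}=B^{\sigma}_{1,1}$ seminorm and uses $B^{\sigma+\varepsilon}_{1,\infty}\subset B^{\sigma}_{1,1}$, is a real improvement, because the identification $X^{\sigma,p}=\Lambda^{\sigma,p}$ the paper relies on is only stated for $1<p<\infty$.
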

%
%
It is important to emphasize that the fractional Sobolev spaces and the Lions–Calderón spaces coincide only when \( p = 2 \), namely,
\[
	\widetilde{X}^{s,2}(\Omega) = \widetilde{W}^{s,2}(\Omega).
\]
This fact is a consequence of
Peetre’s interpolation theorem \cite[Theorem 3.1]{peetre1969transformation}, from which it follows that among Hilbert spaces all interpolation functors coincide. A detailed proof of this fact can be found in Bellido and García-Sáez \cite[Theorem 3.23]{bellido2025bessel}.

	Besov spaces can also be characterized via equivalent norms based on difference quotients over balls. For \( \lambda > 0 \), we define 
	\[
	\Omega_{\lambda} := \{x \in \Omega : \mathrm{dist}(x, \partial \Omega) > \lambda\}, \quad \Omega^{\lambda} := \{x \in \mathbb{R}^N : \mathrm{dist}(x, \partial \Omega) < \lambda\}.
	\]
	
	Let \( D = D_\rho(0) \) be the ball of radius \( \rho \) centered at the origin; we employ the letter $D$ to avoid confusion with the notation for Besov spaces. Given a function \( v \in L^p(\Omega) \) and 
    \( h \in D \), we define the translation \( v_h(x) := v(x+h) \). Then, for \( p,q \in [1, \infty) \) and \( \sigma \in (0,2) \), the Besov seminorm can be equivalently expressed as
	
\begin{equation}
\label{eq: Regularidad-Norma-Besov}
    	|v|_{B_{p, q}^{\sigma}(\Omega; D)} := \left(q \sigma(2-\sigma) \int_{D} \frac{\|v_h - 2v + v_{-h}\|_{L^{p}(\Omega_{|h|})}^{q}}{|h|^{d + q\sigma}} \, dh\right)^{1/q},
\end{equation}
	while for \( q = \infty \),
	\begin{equation}
	\label{eq: Regularidad-Norma-Besov-Inf}
	|v|_{B_{p, \infty}^{\sigma}(\Omega; D)} := \sup_{h \in D} \frac{\| v_h - 2v + v_{-h}\|_{L^p(\Omega_{|h|})}}{|h|^\sigma}.\end{equation}
	
	It is well-known that
    the norm \( \|\cdot\|_{L^p(\Omega)} + |\cdot|_{B_{p,q}^\sigma(\Omega; D)} \) is equivalent to the Besov norm \( \|\cdot\|_{B_{p,q}^\sigma(\Omega)} \) defined via interpolation; see \cite[Theorem 7.47]{adams2003jjf}. Furthermore, we use the result from \cite[Proposition 2.2]{borthagaray2023besov} which shows that the ball \( D \) can be replaced by a convex cone $C$ in the definition of Besov seminorms for \( q = \infty \) and \( \sigma \in (0,2) \). Thus, we obtain:
	\begin{equation}
		\label{Regularidad: Equivalecia normas Besov}
		|v|_{B_{p, \infty}^\sigma(\Omega; \, C)} \simeq |v|_{B_{p, \infty}^\sigma(\Omega; \, D)}.
	\end{equation}
	
	
	Next, we state the following auxiliary lemma, whose proof follows directly from either complex or real interpolation between the cases \( \sigma = 0 \) and \( \sigma = 1 \), i.e., between \( L^p(\Omega) \) and \( W^{1,p}(\Omega) \). 
	
	\begin{Lemma} 
	\label{Regularidad: Lemma estimativa de error}
		Let \( p \in [1, \infty] \), \( \sigma \in [0,1] \), and \( h \in D \). There exists a constant \( C > 0 \) such that for any Lipschitz domain \( \Omega \subset \mathbb{R}^N \), we have
		\[
		\|v - v_h\|_{L^p(\Omega)} \leq C |h|^\sigma \|v\|_{X^{\sigma, p}(\Omega^{|h|})} \quad \forall v \in X^{\sigma, p}(\Omega^{|h|}).
		\]
        Similarly, for all $q \in [1,\infty]$, we also have
        \begin{equation} \label{eq:traslacion_Besov}
            \|v - v_h\|_{L^p(\Omega)} \leq C |h|^\sigma \|v\|_{B^{\sigma, p}_q(\Omega^{|h|})} \quad \forall v \in B^{\sigma, p}_q(\Omega^{|h|}).
        \end{equation}
	\end{Lemma}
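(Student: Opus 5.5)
The plan is to prove the two endpoint estimates and then interpolate, exactly as the remark preceding the statement suggests. We view the map $T_h \colon v \mapsto v - v_h$, restricted to $\Omega$, as a linear operator and bound it at $\sigma = 0$ and $\sigma = 1$.

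\textbf{Endpoint estimates.}
For $\sigma = 0$ the bound is trivial. For $x \in \Omega$ the point $x+h$ lies in $\Omega^{|h|}$, since $|h| < \rho$ and $x+h$ is within distance $|h|$ of $\Omega$ (up to the harmless convention that $\Omega^{|h|}$ is understood to contain $\Omega$). Hence
\[
\|v - v_h\|_{L^p(\Omega)} \le \|v\|_{L^p(\Omega)} + \|v_h\|_{L^p(\Omega)} \le 2\|v\|_{L^p(\Omega^{|h|})}.
\]
For $\sigma = 1$ we first take $v$ smooth and write
\[
v(x+h) - v(x) = \int_0^1 \nabla v(x+th)\cdot h \, dt .
\]
The whole segment from $x$ to $x+h$ stays in $\Omega^{|h|}$. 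Minkowski's integral inequality and translation invariance of the $L^p$ norm then give
\[
\|v - v_h\|_{L^p(\Omega)} \le |h| \, \|\nabla v\|_{L^p(\Omega^{|h|})}.
\]
A density argument extends this to all of $W^{1,p}(\Omega^{|h|})$. For $p = \infty$ we argue directly with the Lipschitz bound instead of using density.

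\textbf{Interpolation.}
The operator $T_h$ is therefore bounded from $L^p(\Omega^{|h|})$ to $L^p(\Omega)$ with norm at most $2$, and from $W^{1,p}(\Omega^{|h|})$ to $L^p(\Omega)$ with norm at most $|h|$. We apply the complex method. Since $X^{\sigma,p}(\Omega^{|h|}) = \Lambda^{\sigma,p}(\Omega^{|h|}) = (L^p, W^{1,p})_\sigma$ with equivalent norms, complex interpolation yields an operator norm of at most $2^{1-\sigma}|h|^\sigma$ on $X^{\sigma,p}$. This is the first inequality.

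For the Besov version \eqref{eq:traslacion_Besov} we use the real method instead. For any decomposition $v = v_0 + v_1$ we have
\[
\|T_h v\|_{L^p(\Omega)} \le 2\|v_0\|_{L^p} + |h|\,\|v_1\|_{W^{1,p}},
\]
so $\|T_h v\|_{L^p(\Omega)} \le 2K(|h|, v)$. Moreover $K(t,v) \le C t^\sigma \|v\|_{(L^p,W^{1,p})_{\sigma,\infty}}$, and the $(\cdot)_{\sigma,q}$ norms dominate the $(\cdot)_{\sigma,\infty}$ norm. Together with the identification of $(L^p,W^{1,p})_{\sigma,q}$ with $B^\sigma_{p,q}$ for $\sigma \in (0,1)$, this gives the estimate for every $q$.

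\textbf{Main obstacle.}
The delicate point is making the constant $C$ independent of the domain and of $h$. The constants in the equivalences $X^{\sigma,p} \simeq \Lambda^{\sigma,p}$ and $B^\sigma_{p,q} \simeq (L^p,W^{1,p})_{\sigma,q}$ on $\Omega^{|h|}$ could in principle depend on the Lipschitz character of $\Omega^{|h|}$.

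To avoid this, the first thing I would try is to work on $\mathbb{R}^N$. I would take a near-optimal extension $w$ of $v$, so that $w|_{\Omega^{|h|}} = v$ and $\|w\|$ is essentially the quotient norm, as in Definition \ref{def:Lions-Calderon-domain}. I would then interpolate the whole-space operator $w \mapsto (w - w_h)|_\Omega$, whose endpoint bounds are uniform in the domain. Since this operator only sees the values of $w$ on $\Omega^{|h|}$, this gives the claim with a constant depending only on $p$, $\sigma$ and $N$. The endpoints $\sigma \in \{0,1\}$ are covered directly by the two endpoint estimates.
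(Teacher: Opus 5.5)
Your proposal is correct and follows the paper's argument, which the paper only sketches as ``interpolation between $\sigma=0$ and $\sigma=1$''. You take the endpoint bounds $2\|v\|_{L^p}$ and $|h|\,\|\nabla v\|_{L^p}$, then apply the complex method for $X^{\sigma,p}$ and the $K$-functional bound $\|v-v_h\|_{L^p(\Omega)}\le 2K(|h|,v)$ for the Besov version.

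Two of your details are left implicit in the paper and are worth keeping: reading $\Omega^{|h|}$ as the outer $|h|$-neighbourhood of $\Omega$, and interpolating the whole-space operator on near-optimal extensions, which gives a genuinely domain-independent constant. Note two limitations, though. As in the paper, the $X^{\sigma,p}$ bound rests on $X^{\sigma,p}=[L^p,W^{1,p}]_\sigma$ and so needs $1<p<\infty$. The Besov bound must be restricted to $\sigma\in(0,1)$: at $\sigma=1$, $q=\infty$ it fails because $B^1_{p,\infty}\not\subset W^{1,p}$, so your closing remark that ``the endpoints are covered directly'' applies only to the $X$-version.
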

	
    Following an argument like in
    \cite[Proposition 2.1]{borthagaray2023besov} and using Lemma \ref{Regularidad: Lemma estimativa de error}, we can bound  Besov seminorms by differences of Lions–Calderón seminorms.
	
	\begin{proposition}
		\label{Resularidad: Reiteración Besov}
		Let \( s \in (0,1) \), \( p \in [1, \infty] \), \( \sigma \in [0,1] \), and \( \Omega \subset \mathbb{R}^N \) be a Lipschitz domain. Then,
		\[
		|v|_{B_{p,q}^{s+\sigma}(\Omega)} \lesssim 
		\begin{cases}
			\left(q(s+\sigma)(2-s-\sigma) \displaystyle\int_{D} \frac{\|v - v_h\|_{X^{s,p}(\Omega)}^q}{|h|^{N+q\sigma}} \, dh\right)^{1/q}, & \text{if } q \in [1, \infty), \\[1em]
			\displaystyle \sup_{h \in D} \frac{\|v - v_h\|_{X^{s,p}(\Omega)}}{|h|^\sigma}, & \text{if } q = \infty.
		\end{cases}
		\]
	\end{proposition}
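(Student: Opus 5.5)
The plan is to follow \cite[Proposition 2.1]{borthagaray2023besov}: write the second difference as a first difference of a first difference, and then use Lemma~\ref{Regularidad: Lemma estimativa de error} to convert the remaining first difference into a factor \(|h|^s\) times a Lions--Calderón norm. Concretely, for \(h \in D\) we have
\[
v_h - 2v + v_{-h} = (v_h - v) - (v_h-v)_{-h} = w^h - (w^h)_{-h}, \qquad w^h := v_h - v .
\]
We apply Lemma~\ref{Regularidad: Lemma estimativa de error} to \(w^h\), with translation vector \(-h\) and differentiability index \(s\), on the set \(\Omega_{|h|}\). Since \((\Omega_{|h|})^{|h|} \subset \Omega\), this gives
\[
\|v_h - 2v + v_{-h}\|_{L^p(\Omega_{|h|})} \le C|h|^s \|w^h\|_{X^{s,p}(\Omega)} = C |h|^s \|v - v_h\|_{X^{s,p}(\Omega)} .
\]
Two points need care here. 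First, \(\Omega_{|h|}\) need not be Lipschitz. Second, we need to check the containment of the \(|h|\)-neighbourhood of \(\Omega_{|h|}\) in \(\Omega\). For the first point, I will note that the proof of Lemma~\ref{Regularidad: Lemma estimativa de error} only uses the interpolation bound \(\|g - g_h\|_{L^p(U)} \le \|g\|_{L^p(U^{|h|})}\) and \(\le |h|\,\|\nabla g\|_{L^p(U^{|h|})}\), which is valid on any open set. Interpolating then bounds the difference by the norm of any extension of \(g\). In particular, we can take an extension of \(w^h|_\Omega\) to \(\mathbb{R}^N\) that is nearly optimal in \(X^{s,p}(\mathbb{R}^N)\), which yields the \(X^{s,p}(\Omega)\) quotient norm.

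Dividing by \(|h|^{s+\sigma}\) and using the characterization \eqref{eq: Regularidad-Norma-Besov} gives
\[
\frac{\|v_h - 2v + v_{-h}\|_{L^p(\Omega_{|h|})}}{|h|^{s+\sigma}} \lesssim \frac{\|v - v_h\|_{X^{s,p}(\Omega)}}{|h|^{\sigma}} .
\]
For \(q=\infty\), we take the supremum over \(h \in D\) and use \eqref{eq: Regularidad-Norma-Besov-Inf}. For \(q<\infty\), we raise to the power \(q\), multiply by \(q(s+\sigma)(2-s-\sigma)\), divide by \(|h|^N\) and integrate over \(D\). Since \(|h|^{N+q(s+\sigma)} = |h|^{N} |h|^{qs}|h|^{q\sigma}\) and the factor \(|h|^{qs}\) has already been cancelled, this gives exactly the right-hand side of the statement, up to the constant \(C^q\). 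The equivalence of the difference-quotient seminorm with the interpolation seminorm \cite[Theorem 7.47]{adams2003jjf} then yields the claim. This equivalence requires \(s+\sigma\in(0,2)\), which holds as long as not both \(s\to1\) and \(\sigma=1\). For the endpoint \(\sigma=1\), the expression should be read with \(s+\sigma<2\), which is automatic because \(s<1\).

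The step I expect to be the main obstacle is the application of the translation estimate to \(w^h\) on the shrunken set. We must make sure that the constant in Lemma~\ref{Regularidad: Lemma estimativa de error} is independent of the domain, since the sets \(\Omega_{|h|}\) vary with \(h\). We must also make sure that the quotient \(X^{s,p}(\Omega)\) norm of \(v - v_h\) is the right object. This requires \(v\) to be defined on a slightly larger set, so that \(v_h\) makes sense on \(\Omega\); otherwise we take \(v\) defined on \(\mathbb{R}^N\), as in the applications where \(v \in \widetilde X^{s,p}(\Omega)\). I would handle both issues by working directly with extensions to \(\mathbb{R}^N\) and using the fact that translation commutes with \(\nabla^s\), so that the \(X^{s,p}(\mathbb{R}^N)\) bound is translation invariant. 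If needed, I would replace \(D\) by a cone via \eqref{Regularidad: Equivalecia normas Besov} to keep the translated sets inside \(\Omega\).
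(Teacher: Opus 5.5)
Your proposal is correct and follows the paper's proof essentially verbatim. The paper also sets $w = v_h - v$ and uses the identity $v_h - 2v + v_{-h} = w - w_{-h}$. It then applies Lemma~\ref{Regularidad: Lemma estimativa de error} with index $s$ to obtain $\|v_h - 2v + v_{-h}\|_{L^p(\Omega_{|h|})} \lesssim |h|^s \|v - v_h\|_{X^{s,p}(\Omega)}$, and substitutes this into the difference-quotient form of the Besov seminorm. Your extra care about $\Omega_{|h|}$ possibly not being Lipschitz is a detail the paper leaves implicit, and your fix is sound: interpolate on $\mathbb{R}^N$, use $(\Omega_{|h|})^{|h|}\subset\Omega$, and take the infimum over extensions.
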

	
	\begin{proof}
		Let \( t = s + \sigma < 2 \). From the properties \eqref{eq: Regularidad-Norma-Besov}-\eqref{eq: Regularidad-Norma-Besov-Inf}, we have
		\[
		|v|_{B_{p,q}^t(\Omega)} \simeq 
		\begin{cases}
			\left(q t(2 - t) \displaystyle \int_{D} \frac{\|v_h - 2v + v_{-h}\|_{L^p(\Omega_{|h|})}^q}{|h|^{N + q t}} \, dh\right)^{1/q}, & \text{if } q \in [1, \infty), \\[1em]
			\displaystyle \sup_{h \in D} \frac{\|v_h - 2v + v_{-h}\|_{L^p(\Omega_{|h|})}}{|h|^t}, & \text{if } q = \infty.
		\end{cases}
		\]
		Setting \( w = v_h - v \), we note that
        $v_h - 2v + v_{-h} = w - w_{-h}$ and,
		applying Lemma \ref{Regularidad: Lemma estimativa de error}, we obtain
		\[
		\|v_h - 2v + v_{-h}\|_{L^p(\Omega_{|h|})} \lesssim |h|^s \|v - v_h\|_{X^{s,p}(\Omega)}.
		\]
		The result follows by substituting into the Besov seminorm expression.
	\end{proof}
	
	The following result, from \cite[Lemma 2.6]{borthagaray2023besov}, shows that Besov seminorms can be equivalently expressed as sums of local norms over overlapping coverings.
	\begin{Lemma} \label{lemma:loc-Besov}
		Let \( p, q \in [1, \infty] \) and \( \sigma \in (0,2) \). Let \( \{ D_j \}_{j=1}^{J} \) be a finite covering of \( \Omega \) by balls of radius \( \rho \), i.e., \( D_j = D_\rho(x_j) \), and $v\in L^p(\Omega)$. Then, 
        \( v \in B^{\sigma}_{p,q}(\Omega) \) if and only if \( v|_{\Omega \cap D_j} \in B^{\sigma}_{p,q}(\Omega \cap D_j) \) for every \( j = 1, \ldots, J \), and
		\begin{equation}
			\|v\|^p_{B^{\sigma}_{p,q}(\Omega)} \simeq \sum_{j=1}^{J} \|v\|^p_{B^{\sigma}_{p,q}(\Omega \cap D_j)}.
		\end{equation}
		
		Moreover, let \( \delta \geq \rho \), and consider a finite covering \( \{ D_j \}_{j=1}^{J} \) of \( \Omega^\delta \). If \( v : \mathbb{R}^N \to \mathbb{R} \) satisfies \( \operatorname{supp}(v) \subset \overline{\Omega} \), then \( v \in \dot{B}^{\sigma}_{p,q}(\Omega) \) if and only if \( v|_{D_j} \in B^{\sigma}_{p,q}(D_j) \) for all \( j = 1, \ldots, J \), and
		\begin{equation}
			\|v\|^p_{\dot{B}^{\sigma}_{p,q}(\Omega)} \simeq |v|^p_{\dot{B}^{\sigma}_{p,q}(\Omega)} \simeq \sum_{j=1}^{J} |v|^p_{B^{\sigma}_{p,q}(D_j)}.
		\end{equation}
		
		The constants hidden in the equivalences depend on \( \sigma, p, q, \Omega \), and on the covering used.
	\end{Lemma}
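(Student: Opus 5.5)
The plan is to reduce the statement to the translation characterization \eqref{eq: Regularidad-Norma-Besov}--\eqref{eq: Regularidad-Norma-Besov-Inf} and to the fact that a finite covering gives a partition of unity with uniformly bounded derivatives.

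\textbf{First assertion.} Let $\{D_j\}_{j=1}^J$ cover $\Omega$ and set $\Omega_j = \Omega \cap D_j$.

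For the upper bound on the local norms, I use that the Besov norm on $\Omega$ is defined by restriction (equivalently, via the interpolation definition). Restriction to the Lipschitz subdomain $\Omega_j$ is bounded $L^p(\Omega)\to L^p(\Omega_j)$ and $W^{2,p}(\Omega)\to W^{2,p}(\Omega_j)$. By interpolation it is bounded on $B^\sigma_{p,q}$. Summing over the finitely many $j$ gives $\sum_j \|v\|^p_{B^\sigma_{p,q}(\Omega_j)} \lesssim \|v\|^p_{B^\sigma_{p,q}(\Omega)}$. The same conclusion follows directly from the difference-quotient seminorm, since $(\Omega_j)_{|h|}\subset \Omega_{|h|}$.

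For the converse I take a smooth partition of unity $\{\psi_j\}$ subordinate to a slightly enlarged covering. Then $v=\sum_j \psi_j v$, and it suffices to bound $\|\psi_j v\|_{B^\sigma_{p,q}(\Omega)}$ by $\|v\|_{B^\sigma_{p,q}(\Omega_j)}$. Multiplication by a smooth cutoff is bounded on $L^p$ and on $W^{2,p}$, hence on the interpolation space. The one delicate point is that $\psi_j v$ must be regarded as a function on $\Omega$ while we only control $v$ on $\Omega_j$. I handle this by arranging $\operatorname{supp}\psi_j \cap \overline\Omega \subset \Omega_j$ up to a positive distance from $\partial D_j\cap\Omega$, which is possible after shrinking the supports since the balls cover $\overline{\Omega}$. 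I then use a (Stein) extension operator on the Lipschitz domain $\Omega_j$. Extending $v|_{\Omega_j}$ to $\mathbb{R}^N$ and multiplying by $\psi_j$ produces a function equal to $\psi_j v$ on $\Omega$, with norm $\lesssim \|v\|_{B^\sigma_{p,q}(\Omega_j)}$. The extension operator is bounded on $L^p$ and $W^{2,p}$ simultaneously, hence on $B^\sigma_{p,q}$.

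\textbf{Second assertion.} Here the covering is of $\Omega^\delta$ and $\operatorname{supp} v\subset\overline\Omega$. Since $v$ vanishes outside $\overline\Omega$, we have $\|v\|_{\dot B}=\|v\|_{B^\sigma_{p,q}(\mathbb{R}^N)}$. The same localization argument, now applied on $\mathbb{R}^N$ with a partition of unity subordinate to $\{D_j\}$ plus an open set disjoint from $\overline\Omega$ (on which $v=0$), gives $\|v\|^p_{\dot B}\simeq \sum_j \|v\|^p_{B^\sigma_{p,q}(D_j)}$.

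To replace full norms by seminorms, I need the Poincaré-type bound $\|v\|_{L^p}\lesssim |v|_{\dot B^\sigma_{p,q}(\Omega)}$. I argue by contradiction and compactness. Take $v_n$ with $\|v_n\|_{L^p}=1$ and seminorm tending to $0$. The Besov ball of functions supported in the fixed compact set $\overline\Omega$ is precompact in $L^p$, so a subsequence converges in $L^p$ to a limit $v$. This limit has vanishing second differences, hence is affine, hence zero by its compact support, contradicting $\|v\|_{L^p}=1$.

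For the local seminorms I also use $\delta\ge\rho$. This guarantees that the balls meeting $\partial\Omega$ also contain regions outside $\overline\Omega$ where $v=0$. A local Poincaré argument of the same compactness type (affine and vanishing on an open set implies zero) then bounds $\|v\|_{L^p(D_j)}$ by the local seminorms summed over a chain of overlapping balls reaching the exterior.

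\textbf{Main obstacle.} I expect the hard part to be this last chaining step: controlling interior balls, which do not meet $\Omega^c$, through overlaps with neighbouring balls. I would handle it by connectedness of the covering combined with the compactness argument applied to the union of balls.
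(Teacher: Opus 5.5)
The paper gives no argument for this lemma; it imports it from \cite[Lemma 2.6]{borthagaray2023besov}. Your self-contained route works: partition of unity plus interpolation for the full norms, then a compactness Poincar\'e inequality to pass to seminorms. However, one step is unjustified as written, because $\Omega_j=\Omega\cap D_j$ need not be Lipschitz. For instance, take $\Omega=B_R(0)\setminus\overline{B_r(a)}$ with $B_r(a)\subset D_j$ and $\partial B_r(a)$ tangent to $\partial D_j$. Then $\Omega_j$ has a cusp at the contact point, so neither a Stein extension operator on $\Omega_j$ nor the difference-quotient description of $B^\sigma_{p,q}(\Omega_j)$ is guaranteed.

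You need neither. Since $\operatorname{supp}\psi_j$ is a compact subset of $D_j$, the map $w\mapsto\psi_j w$, extended by zero to $\Omega\setminus\Omega_j$, is bounded $L^p(\Omega_j)\to L^p(\Omega)$ and $W^{2,p}(\Omega_j)\to W^{2,p}(\Omega)$. Indeed, the open sets $\Omega_j$ and $\Omega\setminus\operatorname{supp}\psi_j$ cover $\Omega$, and both descriptions vanish on the overlap. By interpolation it is bounded $B^\sigma_{p,q}(\Omega_j)\to B^\sigma_{p,q}(\Omega)$. Together with restriction for the other direction, no regularity of $\Omega_j$ enters. Note also that you use a covering of $\overline\Omega$ rather than of $\Omega$. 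This is harmless in the application, where the balls cover $\Omega^\rho\supset\overline\Omega$, but it is an extra assumption.

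For the second part, your account of $\delta\ge\rho$ is off. A ball containing a point of $\partial\Omega$ meets the open set $\mathbb{R}^N\setminus\overline\Omega$ for any Lipschitz domain, whatever $\delta$ is, and your argument never actually uses $\delta\ge\rho$. The chaining you single out as the main obstacle disappears if you run the compactness argument on all balls at once. Normalise $\sum_j\|v_n\|_{L^p(D_j)}^p=1$ with $\sum_j|v_n|^p_{B^\sigma_{p,q}(D_j)}\to0$, and extract a subsequence converging in every $L^p(D_j)$ (via Arzel\`a--Ascoli if $p=\infty$). The limit vanishes a.e.\ off $\overline\Omega$ and is affine on each $D_j$. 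Affine functions agreeing on a nonempty open overlap coincide, so the limit equals a single affine function on the connected component $U$ of $\bigcup_j D_j$ that contains the connected set $\overline\Omega$. Since $\partial U\cap\overline\Omega=\emptyset$, the set $U\setminus\overline\Omega$ is nonempty and open. That affine function therefore vanishes, so the limit is zero, a contradiction.

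This local inequality, combined with your full-norm localisation and $|v|_{B^\sigma_{p,q}(D_j)}\lesssim|v|_{\dot B^\sigma_{p,q}(\Omega)}$, already closes the chain of equivalences. Your separate global Poincar\'e step is therefore redundant.
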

	
	\subsection{Admissible directions and localized translations}
	
	We next present some classical yet useful results concerning Lipschitz domains.
	
	\begin{definition}
		For every \( x_0 \in \mathbb{R}^N \) and \( \rho \in (0,1] \), we define the set of admissible outward vectors by
		\begin{equation} \label{eq:def-calO}
		\mathcal{O}_\rho(x_0) \coloneqq \left\{ h \in D_\rho (0) :  (D_{2\rho}(x_0) \setminus \Omega) + t h \subset \Omega^c \text{ for all } t \in [0,1] \right\}.
\end{equation}
	\end{definition}
	
	A fundamental  aspect of bounded Lipschitz domains is that they satisfy the uniform cone property. This is stated in the following result; see \cite[Theorem 1.2.2.2]{grisvard1985elliptic}.
	
	\begin{proposition} \label{prop:uniform-cone}
		If \( \Omega \) is a bounded Lipschitz domain, then there exist \( \rho \in (0,1] \), \( \theta \in (0, \pi] \), and a measurable map \( \boldsymbol{n}: \mathbb{R}^N \to \mathbb{S}^{N-1} \) such that, for every \( x \in \mathbb{R}^N \),
		\[
		\mathcal{C}_\rho(\boldsymbol{n}(x), \theta) \coloneq \left\{ h \in \mathbb{R}^N : |h| \leq \rho,\ h \cdot \boldsymbol{n} \geq |h| \cos \theta \right\} \subset \mathcal{O}_\rho(x).
		\]
	\end{proposition}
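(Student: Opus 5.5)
The plan follows the classical argument in Grisvard, Theorem 1.2.2.2. A bounded Lipschitz domain is locally the subgraph of a Lipschitz function. So we first cover $\partial\Omega$ by finitely many open sets $U_1,\dots,U_J$. In each $U_j$, after a rotation of coordinates, $\Omega\cap U_j=\{(y',y_N): y_N<\varphi_j(y')\}\cap U_j$ with $\varphi_j$ Lipschitz of constant $L_j$. We let $\boldsymbol{e}_j$ denote the rotated $N$-th unit vector. It points outward, since $\Omega$ lies below the graph.

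The local cone step is as follows. Fix $j$, set $L=\max_j L_j$, and choose $\theta$ with $\cot\theta\ge L$. Suppose $h$ satisfies $h\cdot\boldsymbol{e}_j\ge|h|\cos\theta$ and $z=(z',z_N)$ satisfies $z_N\ge\varphi_j(z')$. Then for every $t\in[0,1]$,
\[
z_N+t h_N\ \ge\ \varphi_j(z')+t|h|\cos\theta\ \ge\ \varphi_j(z'+th')-tL|h'|+t|h|\cos\theta .
\]
Since $|h'|\le|h|\sin\theta$ and $L\sin\theta\le\cos\theta$, the right-hand side is at least $\varphi_j(z'+th')$. Hence $z+th\notin\Omega$, provided $z+th$ stays inside the graph chart. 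This shows that, within a chart, $\Omega^c$ is invariant under translations by the cone $\mathcal C_\rho(\boldsymbol e_j,\theta)$.

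The globalization step comes next. By the Lebesgue number lemma applied to the compact set $\partial\Omega$, there is $\rho\in(0,1]$ with the following property: every $x$ with $\operatorname{dist}(x,\partial\Omega)<3\rho$ has $D_{4\rho}(x)$ contained in a slightly shrunken chart $U_j$. We shrink $\rho$ further so that the segments $z+th$, with $z\in D_{2\rho}(x)$ and $|h|\le\rho$, remain in $U_j$. For such $x$ we define $\boldsymbol n(x)=\boldsymbol e_{j(x)}$, where $j(x)$ is the smallest admissible index. This choice makes $\boldsymbol n$ piecewise constant on Borel sets, hence measurable. Every $z\in D_{2\rho}(x)\setminus\Omega$ lies in the chart with $z_N\ge\varphi_j(z')$, so the previous step gives $\mathcal C_\rho(\boldsymbol n(x),\theta)\subset\mathcal O_\rho(x)$.

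The remaining points are $x$ with $\operatorname{dist}(x,\partial\Omega)\ge3\rho$. In this case $D_{3\rho}(x)$ lies entirely in $\Omega$ or entirely in $\Omega^c$. If it lies in $\Omega$, then $D_{2\rho}(x)\setminus\Omega=\emptyset$ and the inclusion is trivial. If it lies in $\Omega^c$, we must check that $z+th\in\Omega^c$ for $z\in D_{2\rho}(x)$. Such a point may leave $D_{3\rho}(x)$, but only by at most $\rho$. Here we take some care: we enlarge the threshold and apply the chart argument whenever $\operatorname{dist}(x,\partial\Omega)<4\rho$. If instead $\operatorname{dist}(x,\partial\Omega)\ge4\rho$ and $x\notin\Omega$, then every $z+th$ lies in $D_{3\rho}(x)\subset\Omega^c$. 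On this region we set $\boldsymbol n$ to be an arbitrary constant vector.

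The main obstacle is the bookkeeping of radii, namely choosing $\rho$ so that both the segments stay inside a single graph chart and the far-field cases close. The geometric inequality itself is routine once $\theta$ is tied to the Lipschitz constants.
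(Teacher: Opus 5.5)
Your argument is correct and is essentially the standard one from Grisvard's Theorem 1.2.2.2, which the paper cites without giving a proof of its own: the local inclusion via $\cot\theta \ge L$ in a Lipschitz-graph chart, followed by a Lebesgue-number globalization with a Borel piecewise-constant choice of $\boldsymbol{n}$. The one slip is your worry that $z+th$ may leave $D_{3\rho}(x)$; since $|z-x|<2\rho$ and $|th|\le\rho$, it always stays in $D_{3\rho}(x)$, so enlarging the threshold to $4\rho$ is unnecessary, though harmless.
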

	
	Global translations are not suitable to analyze the local behavior of solutions to our Dirichlet problem. Instead, following the approach of Savaré \cite{savare1998regularity}, we introduce localized translations in admissible directions. The localization is obtained via a convex combination of the identity with a translation, weighted by a cut-off function.
	
	\begin{definition}
		\label{Regularidad: Operador de Traslación Localizada}
		For \( v\colon \Omega \to \mathbb{R} \), let \(\tilde{v}\) denote its zero extension onto $\Omega^c$. For \( h \in \mathbb{R}^N \), define
		\[
		v_h(x) \coloneq \tilde{v}(x + h), \quad x \in \mathbb{R}^N.
		\]
		For \( x_0 \in \mathbb{R}^N \) and \( \rho > 0 \), let \( D_\rho(x_0) \) be the ball of radius \( \rho \) centered at \( x_0 \). With a smooth cut-off function \( \phi \) satisfying
		\[
		0 \leq \phi \leq 1, \quad \phi \equiv 1 \text{ in } D_\rho(x_0), \quad \text{supp}(\phi) \subset D_{2\rho}(x_0),
		\]
		we define the localized translation operator:
		\[
		T_h v \coloneq \phi v_h + (1 - \phi)v.
		\]
	\end{definition}
The introduction of the admissible set \eqref{eq:def-calO} guarantees that the operator \( T_h \) maps \( \widetilde{X}^{s,p}(\Omega) \) into itself for every \( h \in \mathcal{O}_\rho(x_0) \). We now turn to the task of characterizing the fractional gradient of a localized translation, expressing it as the localized translation of the gradient together with an additional commutator term.

\begin{Lemma}
	\label{Regularidad: Traslación del gradiente}
	Let \( v \in X^{s,p}(\mathbb{R}^{N}) \), and \( h \in \mathcal{O}_\rho\left(x_0\right) \). Then the fractional gradient of the localized translation \( T_h v \) satisfies the identity
	\begin{equation}
		\label{Regularidad: Identidad gradiente trasladado}
		\nabla^s(T_h v)(x) = T_h(\nabla^s v)(x) + \mathcal{C}_{\phi,v}(x),
	\end{equation}
	where the commutator term \( \mathcal{C}_{\phi,v} \) is given by 
	\begin{align}
		\label{eq: Regularidad-conmutador}
	   \mathcal{C}_{\phi,v}(x) \coloneq  \mus \int_{\mathbb{R}^N} \frac{(\phi(x) - \phi(y)) \, (v_h(y) - v(y)) \, (x - y)}{|x - y|^{N + s + 1}} \, dy.
	\end{align}
\end{Lemma}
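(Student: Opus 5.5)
The identity is a product-rule computation, so I would verify it first for smooth compactly supported $v$ and then extend by density. Write $T_h v = v + \phi\,(v_h - v)$. By linearity of $\nabla^s$, and because $\nabla^s$ commutes with translations (the kernel depends only on $x-y$), we have $\nabla^s(v_h) = (\nabla^s v)_h$. It therefore suffices to show
\[
\nabla^s\bigl(\phi w\bigr)(x) = \phi(x)\,\nabla^s w(x) + \mus \int_{\mathbb{R}^N} \frac{(\phi(x)-\phi(y))\, w(y)\,(x-y)}{|x-y|^{N+s+1}}\,dy,
\qquad w := v_h - v .
\]
Given this, $\nabla^s(T_h v) = \nabla^s v + \phi\bigl((\nabla^s v)_h - \nabla^s v\bigr) + \mathcal{C}_{\phi,v} = T_h(\nabla^s v) + \mathcal{C}_{\phi,v}$, which is exactly \eqref{Regularidad: Identidad gradiente trasladado}.

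The displayed formula comes from the pointwise algebraic identity
\[
\phi(x)w(x) - \phi(y)w(y) = \phi(x)\bigl(w(x)-w(y)\bigr) + \bigl(\phi(x)-\phi(y)\bigr) w(y).
\]
Multiply it by the kernel $\mus\,(x-y)|x-y|^{-N-s-1}$ and integrate in $y$ using definition \eqref{eq:def-grads}. Splitting the integral is legitimate once each piece converges absolutely. For $w\in C_c^\infty$ the first piece converges near the diagonal by a Taylor expansion, with a principal-value cancellation of the odd term if needed. The second piece converges absolutely because $|\phi(x)-\phi(y)|\le \|\nabla\phi\|_\infty |x-y|$, which makes the kernel of order $|x-y|^{-N-s+1}$ near $x$. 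At infinity both pieces are controlled by the decay $|x-y|^{-N-s}$ together with the boundedness of $\phi$ and $w$.

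The extension to general $v\in X^{s,p}(\mathbb{R}^N)$, which I expect to be the main obstacle, goes by density: take $v_k\in C_c^\infty$ with $v_k\to v$ in $X^{s,p}$. The left-hand side and $T_h(\nabla^s v_k)$ converge in $L^p$, since $T_h$ is bounded on $L^p$ and, by the smooth case just proved, $\nabla^s T_h$ is bounded on $X^{s,p}$. It then remains to show that the commutator map $w\mapsto \mus\int (\phi(x)-\phi(y))w(y)(x-y)|x-y|^{-N-s-1}\,dy$ is bounded on $L^p$. I would split its kernel into $|x-y|<1$ and $|x-y|\ge1$. On the near region the kernel is bounded by $\|\nabla\phi\|_\infty |x-y|^{1-N-s}$, which is integrable. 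On the far region it is bounded by $2|x-y|^{-N-s}$, also integrable. Young's inequality then gives an $L^p$ bound, with a constant depending on $\phi$ and $s$, so the identity passes to the limit almost everywhere.

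The admissibility $h\in\mathcal{O}_\rho(x_0)$ is not needed for the identity itself. It only ensures that $T_h v$ stays in $\widetilde{X}^{s,p}(\Omega)$ when $v$ does.
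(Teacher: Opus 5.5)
Your argument is correct and is essentially the paper's proof. Writing $T_h v = v + \phi\,(v_h - v)$ and splitting $\phi(x)w(x)-\phi(y)w(y)$ is just a regrouping of the paper's decomposition $T_h v(x) - T_h v(y) = z(x,y) + p(x,y)$, followed by linearity and translation invariance of $\nabla^s$. Your extra step (absolute convergence for smooth $v$, then density using the $L^p$ bound on the commutator, which the paper proves separately in its next lemma) supplies rigor the paper's formal computation leaves implicit. You are also right that $h \in \mathcal{O}_\rho(x_0)$ plays no role in the identity itself.
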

\begin{proof}
	We begin by observing that the difference of the localized translations can be decomposed as 
	\[ 
	T_h v(x) - T_h v(y) = z(x,y)+ p(x,y),
	\]
	where
	\begin{align*}
		z(x,y) &\coloneq \phi(x)(v_h(x) - v_h(y)) + (1 - \phi(x))(v(x) - v(y)), \\
		p(x,y) &\coloneq (\phi(x) - \phi(y))\left(v_h(y)- v(y)\right).
	\end{align*}
	By the definition of the Riesz fractional gradient, we write
	\begin{align*}
		\nabla^s(T_h v)(x) 
		&= \mus \left( \int_{\mathbb{R}^N} \frac{z(x,y)(x - y)}{|x - y|^{N + s + 1}} \, dy + \int_{\mathbb{R}^N} \frac{p(x,y)(x - y)}{|x - y|^{N + s + 1}} \, dy \right).
	\end{align*}
	
From the linearity of the integral and regrouping terms, we obtain \eqref{Regularidad: Identidad gradiente trasladado}.
\end{proof}

We next aim to 
 obtain suitable $L^p$-bounds for the commutator $\mathcal{C}_{\phi,v}$, which will have a key role in our proof of regularity of solutions.
 
\begin{Lemma}
	\label{Regularidad: Cota conmutador}
 
 Let \( v \in \widetilde{X}^{s,p}(\Omega) \), \( \phi \in W^{1,\infty}(\mathbb{R}^N) \), \( s \in (0,1) \), and \( p \in [1, \infty) \). Then, the commutator \eqref{eq: Regularidad-conmutador} satisfies the estimate
	\begin{align}
		\label{Regularidad: Teorema de regularidad F_2 Cota conmutador con norma Lp}
\|\mathcal{C}_{\phi,v}\|_{L^p(\mathbb{R}^N)} &\leq C  \| v_h-v \|_{L^{p}(\mathbb{R}^N)} ,
	\end{align}
	where the constant \( C > 0 \) depends only on \( N, p, s \), and \( \|\phi\|_{W^{1,\infty}(\mathbb{R}^N)} \).
\end{Lemma}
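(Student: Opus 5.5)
Set $w \coloneq v_h - v \in L^p(\mathbb{R}^N)$. With this notation the commutator \eqref{eq: Regularidad-conmutador} is a linear integral operator applied to $w$:
\[
\mathcal{C}_{\phi,v}(x) = \mus \int_{\mathbb{R}^N} K(x,y)\, w(y)\,dy, \qquad K(x,y) \coloneq \frac{(\phi(x)-\phi(y))(x-y)}{|x-y|^{N+s+1}}.
\]
The plan is to show that $K$ is an integrable kernel, uniformly in each variable, and then apply Schur's test (equivalently, Young's inequality for a majorant convolution kernel). No cancellation or smoothness of $w$ is used. This is consistent with the statement, which only asks for $\|w\|_{L^p}$ on the right-hand side.

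The key step is a pointwise bound on $K$ that mirrors the splitting in the proof of Lemma \ref{Regularidad: Cota gradiente s de función regular}. Using $|\phi(x)-\phi(y)| \le \min\{\|\nabla\phi\|_{L^\infty}|x-y|,\, 2\|\phi\|_{L^\infty}\}$, we get
\[
|K(x,y)| \le k(x-y), \qquad k(z) \coloneq \min\Bigl\{ \|\nabla \phi\|_{L^\infty} |z|^{-N-s+1},\; 2\|\phi\|_{L^\infty} |z|^{-N-s}\Bigr\}.
\]
Next we check that $k \in L^1(\mathbb{R}^N)$. Near the origin, $|z|^{1-N-s}$ is integrable because $1-s>0$. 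At infinity, $|z|^{-N-s}$ is integrable because $s>0$. Integrating in polar coordinates over $D_R(0)$ and its complement, exactly as in that lemma, gives
\[
\|k\|_{L^1} \le C(N)\Bigl(\frac{R^{1-s}}{1-s}\|\nabla\phi\|_{L^\infty} + \frac{\|\phi\|_{L^\infty}}{sR^{s}}\Bigr).
\]
Taking for instance $R=1$ makes this bounded in terms of $N$, $s$ and $\|\phi\|_{W^{1,\infty}}$.

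The conclusion then follows from Young's convolution inequality:
\[
|\mathcal{C}_{\phi,v}(x)| \le \mus\,(k * |w|)(x) \quad\Longrightarrow\quad \|\mathcal{C}_{\phi,v}\|_{L^p(\mathbb{R}^N)} \le \mus\,\|k\|_{L^1}\,\|w\|_{L^p(\mathbb{R}^N)}.
\]
This holds for every $p\in[1,\infty)$. Combined with \eqref{eq:scaling-mu}, the constant is $C = \mus\|k\|_{L^1}$, which depends only on $N$, $s$ and $\|\phi\|_{W^{1,\infty}}$; in particular it is trivially allowed to depend on $p$. The factor $1/(1-s)$ coming from $\|k\|_{L^1}$ is compensated by $\mus \simeq 1-s$, so only the $1/s$ blow-up remains.

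The main technical point is justifying that the integral defining $\mathcal{C}_{\phi,v}$ converges absolutely, so that the pointwise majorization is legitimate rather than merely a principal-value statement. This follows from Tonelli's theorem: $k*|w|$ is finite almost everywhere because $k\in L^1$ and $w\in L^p$. The identity of Lemma \ref{Regularidad: Traslación del gradiente} is likewise understood almost everywhere.
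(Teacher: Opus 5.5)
Your proof is correct and follows essentially the same route as the paper. Both arguments use the Lipschitz bound on $\phi$ near the diagonal and the $L^\infty$ bound away from it, and then apply Young's convolution inequality with kernels of $L^1$ norm $\omega_{N-1}R^{1-s}/(1-s)$ and $\omega_{N-1}R^{-s}/s$. The only cosmetic difference is that you merge the near- and far-field kernels into a single majorant $k$ and apply Young once, whereas the paper splits the integral over $D_R(x)$ and its complement and treats each piece separately; your version incidentally gives a constant independent of $p$.
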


\begin{proof} 
For notational convenience, we denote $\delta_h v := v_h - v$ throughout the proof. 
	From identity \eqref{eq: Regularidad-conmutador}, we can write
	\[
\|\mathcal{C}_{\phi,v}\|^p_{L^p(\mathbb{R}^N)}= \mus^p\int_{\mathbb{R}^N} \left|\int_{\mathbb{R}^N} \frac{(\phi(x)-\phi(y))\, \delta_h v(y) \, (x-y)}{|x-y|^{N+s+1}} dy\right|^p dx.
	\]	
	Let \( R > 0 \). We split the domain of integration into a near-field region \( D_R(x) \) and its complement:
	\begin{align*}
\|\mathcal{C}_{\phi,v}\|^p_{L^p(\mathbb{R}^N)} \leq C\bigg[&\int_{\mathbb{R}^N} \left|\int_{D_R(x)} \frac{(\phi(x)-\phi(y))\, \delta_h v(y) \,(x-y)}{|x-y|^{N+s+1}} dy\right|^p dx \\
		+ &\int_{\mathbb{R}^N} \left|\int_{D^c_R(x)} \frac{(\phi(y)-\phi(x))\, \delta_h v(y) \,(x-y)}{|x-y|^{N+s+1}} dy\right|^p dx\bigg].
	\end{align*}
	We estimate each term separately. For the near-field term over \( D_R(x) \), 
    since \( \phi \in W^{1,\infty}(\mathbb{R}^N) \), we 
    obtain 
	\begin{align}
\int_{\mathbb{R}^N} \Bigg|\int_{D_R(x)} 
&\frac{(\phi(x)-\phi(y))\, \delta_h v(y) \,(x-y)}{|x-y|^{N+s+1}}\,dy\Bigg|^p dx \\
&\leq \|\nabla \phi \|^{p}_{L^{\infty}(\mathbb{R}^N)} \int_{\mathbb{R}^N} \Bigg(\int_{D_R(x)} \frac{| \delta_h v(y)|}{|x-y|^{N+s-1}}\,dy\Bigg)^p dx \\
& = \|\nabla \phi \|^{p}_{L^{\infty}(\mathbb{R}^N)} \int_{\mathbb{R}^N} \Big( (|\delta_h v| \ast \mathcal{K}_R)(x)\Big)^p dx,\\
\end{align}
where 
\(
\mathcal{K}_R(z) := |z|^{-N-s+1}\,\chi_{D_R(0)}(z).
\)
In particular, a direct computation in polar coordinates shows that
\[
\|\mathcal{K}_R\|_{L^1(\mathbb{R}^N)}
= \frac{\omega_{N-1}}{1-s}\, R^{1-s},
\]
with \(\omega_{N-1}\) denoting the \((N-1)\)-dimensional surface measure of the unit sphere. 
With this bound at hand, we may apply Young's inequality for convolutions, to obtain
\begin{align}
    \label{eq: Conmutador-Dentro-Bola}
    \int_{\mathbb{R}^N} &\Bigg|\int_{D_R(x)} 
\frac{(\phi(x)-\phi(y))\, \delta_h v(y) \,(x-y)}{|x-y|^{N+s+1}}\,dy\Bigg|^p dx \\
&\leq \frac{\omega^p_{N-1}}{(1-s)^{p}}\, R^{(1-s)p} \|\phi \|^{p}_{W^{1,\infty}(\mathbb{R}^N)}  \| \delta_h v \|^p_{L^{p}(\mathbb{R}^N)}.
\end{align}
For the far-field contribution over \( D_R^c(x) \), we use the estimate 
\( |\phi(x)- \phi(y)|\leq 2 \| \phi\|_{L^{\infty}(\mathbb{R}^N)}\) and proceed in the same fashion, obtaining
\begin{align*}
		\int_{\mathbb{R}^N}   & \left|\int_{D_R^c(x)} \frac{(\phi(y)-\phi(x))\, \delta_h v(y) \,(x-y)}{|x-y|^{N+s+1}} dy \right|^{p} dx \\
        &\leq 2\|\phi \|^{p}_{L^{\infty}(\mathbb{R}^N)} \int_{\mathbb{R}^N} \Big( (|\delta_h v| \ast \mathcal{K}^c_R)(x)\Big)^p dx, 
\end{align*}
where 
the kernel is
\(
\mathcal{K}^c_R(z) := |z|^{-N-s}\,\chi_{D_R^c(0)}(z).
\)
In this case, a direct computation yields
\[
\|\mathcal{K}^c_R\|_{L^1(\mathbb{R}^N)}
= \frac{\omega_{N-1}}{s}\, R^{-s},
\]	
and therefore
\begin{align}
     \label{eq: Conmutador-fuera-Bola}
    \int_{\mathbb{R}^N} &\left|\int_{D^c_R(x)} \frac{(\phi(y)-\phi(x))\, \delta_h v(y) \,(x-y)}{|x-y|^{N+s+1}} dy\right|^p dx \\&\leq  2\|\phi \|^{p}_{L^{\infty}(\mathbb{R}^N)}\frac{\omega_{N-1}^p}{s^p}\, R^{-sp} \| \delta_h v \|^p_{L^{p}(\mathbb{R}^N)}.
\end{align}

Combining the bounds \eqref{eq: Conmutador-Dentro-Bola} and \eqref{eq: Conmutador-fuera-Bola}, we obtain \eqref{Regularidad: Teorema de regularidad F_2 Cota conmutador con norma Lp}.
\end{proof}

	Next, we state an estimate for the difference between a function and its localized translation. The proof follows by interpolation; see \cite[Lemma 2.9]{borthagaray2023besov}. 
	\begin{Lemma} 
    \label{lemm: diferencia-traslacion-v}
    Let \( T_h \) be as in Definition \ref{Regularidad: Operador de Traslación Localizada}. Then, for every \( h \in \mathbb{R}^N \), \( \sigma \in [0,1] \), \( q \in [1, \infty] \), and \( \gamma \in (-1,1) \), we have
		\[
		\|v - T_h v\|_{B^{\gamma}_{p,q}(D_{2\rho}(x_0))} \lesssim |h|^\sigma \|v\|_{B^{\gamma + \sigma}_{p,q}(D_{3\rho}(x_0))} \quad \text{for all } v \in B^{\gamma + \sigma}_{p,q}(D_{3\rho}(x_0)).
		\]
		
		The constant in the inequality is independent of \( \sigma \) and \( q \), but may blow up as \( \gamma \to \pm 1 \).
		
		Moreover, for every \( \sigma \in (0,1) \), we also have the estimate
		\[
		\|v - T_h v\|_{L^p(D_{2\rho}(x_0))} \lesssim |h|^\sigma \|v\|_{B^{\sigma}_{p,\infty}(D_{3\rho}(x_0))} \quad \text{for all } v \in B^{\sigma}_{p,\infty}(D_{3\rho}(x_0)).
		\]
	\end{Lemma}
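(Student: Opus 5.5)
The statement to prove is Lemma~\ref{lemm: diferencia-traslacion-v}. The plan is to interpolate between the endpoint cases $\sigma=0$ and $\sigma=1$.

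First observe that $v - T_h v = \phi\,(v - v_h)$. Multiplication by the smooth cut-off $\phi$ is bounded on $B^{\gamma}_{p,q}$ for $\gamma\in(-1,1)$, with a constant depending on $\|\phi\|_{W^{1,\infty}}$ and on $\gamma$. For $\gamma<0$ this follows by duality, and this is the source of the possible blow-up as $\gamma\to\pm1$. It therefore suffices to bound $\|v-v_h\|_{B^{\gamma}_{p,q}(D_{2\rho}(x_0))}$.

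For the endpoints, regard $\gamma$ as the parameter of a scale of spaces $E_\gamma$. The map $\delta_h\colon v\mapsto v-v_h$ satisfies two bounds:
\begin{itemize}
\item $\sigma=0$: $\|\delta_h v\|_{E_\gamma(D_{2\rho})}\le 2\|v\|_{E_\gamma(D_{3\rho})}$, valid for $|h|\le\rho$ because translation is an isometry;
\item $\sigma=1$: $\|\delta_h v\|_{E_\gamma(D_{2\rho})}\le |h|\,\|v\|_{E_{\gamma+1}(D_{3\rho})}$, which comes from $v-v_h=-\int_0^1 \nabla v(\cdot+th)\cdot h\,dt$ together with boundedness of $\nabla\colon E_{\gamma+1}\to E_\gamma$.
\end{itemize}
Concretely, I would first establish these two bounds on the integer-order couple $L^p$, $W^{1,p}$, $W^{-1,p}$ (so $\gamma\in\{-1,0\}$ as base points), namely $L^p\to L^p$ and $W^{1,p}\to L^p$ with gain $|h|$, and likewise $W^{-1,p}\to W^{-1,p}$ and $L^p\to W^{-1,p}$ with gain $|h|$. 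Real interpolation with parameters $(\sigma,q)$ then gives $\|\delta_h v\|_{(\cdot)}\lesssim |h|^\sigma\|v\|_{(\cdot)}$ on the interpolated spaces. Here I use that the operator norm of an interpolated operator is at most $M_0^{1-\sigma}M_1^{\sigma}=2^{1-\sigma}|h|^\sigma$, which yields a constant independent of $\sigma$ and $q$ when the $K$-functional is normalized. Reiteration between these couples produces $B^{\gamma}_{p,q}$ and $B^{\gamma+\sigma}_{p,q}$ for all $\gamma\in(-1,1)$.

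The localization to $D_{2\rho}$ versus $D_{3\rho}$ requires $|h|\le\rho$. For $|h|>\rho$ the estimate is trivial from the $\sigma=0$ bound, since $|h|^\sigma\ge\rho^\sigma$.

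For the second estimate ($L^p$ on the left, $B^\sigma_{p,\infty}$ on the right), I would argue directly from the $K$-functional rather than through the first estimate. Split $v=v_0+v_1$ with $v_0\in L^p$ and $v_1\in W^{1,p}$. Then
\begin{equation*}
\|\delta_h v\|_{L^p}\le 2\|v_0\|_{L^p}+|h|\,\|\nabla v_1\|_{L^p}\le 2K(|h|,v),
\end{equation*}
and $K(|h|,v)\le |h|^\sigma\|v\|_{(L^p,W^{1,p})_{\sigma,\infty}}$. The last space equals $B^\sigma_{p,\infty}$ for $\sigma\in(0,1)$, and the factor $\phi$ is harmless.

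The main obstacle is the bookkeeping on bounded balls. Interpolation of restricted spaces and reiteration near the negative indices require an extension operator on $D_{3\rho}$ (a Stein extension, which exists on balls) and care with the $W^{-1,p}$ definition. I would handle this by extending $v$ from $D_{3\rho}$ to $\mathbb{R}^N$, proving everything on $\mathbb{R}^N$, and restricting at the end.
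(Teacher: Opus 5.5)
Your strategy (write $v-T_hv=\phi\,(v-v_h)$, prove integer-order endpoint bounds with constants $2$ and $|h|$, then interpolate) is the one the paper relies on by citing \cite{borthagaray2023besov}. Your $K$-functional proof of the second estimate is correct. Two steps, however, fail as written.

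First, your base points are only $\gamma\in\{-1,0\}$, i.e.\ couples built from $W^{-1,p}$, $L^p$, $W^{1,p}$. With these, interpolation and reiteration can only produce targets $B^{\gamma}_{p,q}$ with $\gamma\le 0$ and sources of order at most $1$, because interpolation spaces never leave the range spanned by the endpoint spaces. The lemma also covers $\gamma\in(0,1)$, with $\gamma+\sigma$ possibly in $(1,2)$. You therefore need the base point $\gamma=1$, namely $\|v-v_h\|_{W^{1,p}(D_{2\rho}(x_0))}\le 2\|v\|_{W^{1,p}(D_{3\rho}(x_0))}$ and $\|v-v_h\|_{W^{1,p}(D_{2\rho}(x_0))}\le |h|\,\|v\|_{W^{2,p}(D_{3\rho}(x_0))}$. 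Both follow by applying the $\gamma=0$ bounds to $\nabla v$, since translations commute with derivatives. You then reiterate inside $(L^p,W^{2,p})$, which is how the paper defines $B^{\sigma}_{p,q}$.

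Second, your treatment of $|h|>\rho$ is circular. The $\sigma=0$ bound you invoke is exactly the one you noted requires $D_{2\rho}(x_0)+h\subset D_{3\rho}(x_0)$. For larger $|h|$, $T_hv$ on $D_{2\rho}(x_0)$ involves values of $v$ outside $D_{3\rho}(x_0)$, which the right-hand side does not control. The inequality is in fact false there: if $|h|>2\rho$, choose $v\in C_c^\infty(\mathbb{R}^N)$ supported in $D_\rho(x_0+h)\setminus\overline{D_{3\rho}(x_0)}$. Then the right-hand side vanishes, while $v-T_hv=-v_h\not\equiv 0$ on $D_\rho(x_0)$, where $\phi\equiv 1$. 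The lemma must be read for $|h|\le\rho$, which is all that is used later, since $\mathcal{O}_\rho(x_0)\subset D_\rho(0)$.

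Finally, a Stein extension operator acts on $W^{k,p}$ with $k\ge 0$, not on $W^{-1,p}$ or on negative-order Besov spaces. So your plan of extending $v$ to $\mathbb{R}^N$ does not cover $\gamma+\sigma<0$. You can either use a universal (Rychkov-type) extension operator, or observe that the endpoint bounds are local and can be proved directly on the balls by duality. Indeed, for $w\in W^{1,p'}_0(D_{2\rho}(x_0))$ and $|h|\le\rho$ one has $\langle v-v_h,w\rangle=\langle v,w-w_{-h}\rangle$ with $w_{-h}\in W^{1,p'}_0(D_{3\rho}(x_0))$.
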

	We now introduce a notion of regularity for functionals that quantifies their sensitivity with respect to a given family of perturbations. This concept, inspired by the framework proposed by Savaré in \cite{savare1998regularity}, plays a central role in our analysis.
	
	\begin{definition}
		\label{Regularidad: Definición Regularidad Local}
		Let \( V \) be a Banach space,  \( K \subset V \), and  \( \sigma > 0 \). Given a family of translation operators \( T_h: K \to K \), indexed by \( h \in D \subset \mathbb{R}^N \), we say that a functional \(  \mathcal{J}: V \to \mathbb{R} \) is \((T, D, \sigma)\)-\emph{regular} on \( K \) if, for every \( v \in K \), the following quantity is finite:
		\[
		\omega(v) := \omega(v; \mathcal{J}, T, D, \sigma) := \sup_{h \in D} \frac{\left| \mathcal{J}(T_h v) - \mathcal{J}(v) \right|}{|h|^{\sigma}} < \infty.
		\]
	\end{definition}
	
	Observe that the subadditivity of the functional 
    $\mathcal J$ from \eqref{eq:functionals} implies that it suffices to establish the \((T, D, \sigma)\)-regularity of individual components \( \mathcal{J}_1 \) and \( \mathcal{J}_2 \) to conclude the corresponding regularity of their sum. Based on the result of Savaré in \cite[Theorem 1 and Corollary 1]{savare1998regularity}, we obtain the following consequence of the monotonicity property of the functional (see Remark~\ref{rem: coercividad}).

	\begin{proposition}
		\label{Regularidad: Regularity and minimizers}
		Let \( x_0 \in \mathbb{R}^N \), \( \rho > 0 \), and \( h \in \mathcal{O}_\rho(x_0) \). Consider the translation operators \( T_h \) as defined in Definition~\ref{Regularidad: Operador de Traslación Localizada}. If \( u \) is a weak solution of the Dirichlet problem~\eqref{Regularidad:Problema Dirichlet} and \( \mathcal{J} \) is \((T, \mathcal{O}_\rho(x_0), \sigma)\)-regular on \( \widetilde{X}^{s,p}(\Omega) \), then:
		\begin{itemize}
			\item For \( p \geq 2 \), the following estimate holds:
			\begin{equation}
					\label{eq: regularity and minimizers p>2}
			 \| u - T_h u \|_{\widetilde{X}^{s,p}(\Omega)}^p \leq C \, \omega(u) \, |h|^{\sigma}.
			\end{equation}
			
			\item For \( 1 < p \leq 2 \), we have:
\begin{equation}
	\label{eq: regularity and minimizers p<2}
				 \| u - T_h u \|_{\widetilde{X}^{s,p}(\Omega)}^2 \leq C \left( \| u \|_{\widetilde{X}^{s,p}(\Omega)} + \| T_h u \|_{\widetilde{X}^{s,p}(\Omega)} \right)^{2-p} \omega(u) \, |h|^{\sigma}.
\end{equation}
		\end{itemize}
	\end{proposition}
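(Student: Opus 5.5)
The idea is to follow Savaré's abstract argument: compare the energy at $u$ and at the competitor $T_h u$, and convert the energy gap into a lower bound through the uniform monotonicity of the map $\xi\mapsto|\xi|^{p-2}\xi$. Since $h\in\mathcal{O}_\rho(x_0)$, the operator $T_h$ maps $\widetilde{X}^{s,p}(\Omega)$ into itself, so $T_h u$ is an admissible competitor and the minimality of $u$ gives $\mathcal{J}(u)\le\mathcal{J}(T_h u)$.

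The key lower bound is a strong-convexity inequality for $\mathcal{J}$ around its minimizer. Write $w = T_h u$ and $A(\xi)=|\xi|^{p-2}\xi$. Set $g(t)=\mathcal{J}(u+t(w-u))$. Then $g'(0)=0$ by the weak formulation \eqref{eq:weak-formulation} tested with $v=w-u\in\widetilde{X}^{s,p}(\Omega)$. Therefore
\[
\mathcal{J}(w)-\mathcal{J}(u)=\int_0^1 \int_{\mathbb{R}^N}\big(A(\nabla^s u+t\nabla^s(w-u))-A(\nabla^s u)\big)\cdot\nabla^s(w-u)\,dx\,dt .
\]
We now use the elementary vector inequalities behind Remark~\ref{rem: coercividad}.

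For $p\ge2$ we have $(A(a)-A(b))\cdot(a-b)\ge c|a-b|^p$. Applied with $a-b=t\nabla^s(w-u)$, this gives
\[
\mathcal{J}(w)-\mathcal{J}(u)\ge c\int_0^1 t^{p-1}\,dt\;\|\nabla^s(w-u)\|_{L^p}^p = \tfrac{c}{p}\,\|u-T_hu\|^p_{\widetilde{X}^{s,p}(\Omega)} .
\]
For $1<p\le2$ we use instead $(A(a)-A(b))\cdot(a-b)\ge c(|a|+|b|)^{p-2}|a-b|^2$. Hölder's inequality with exponents $2/p$ and $2/(2-p)$ then yields
\[
\|a-b\|_{L^p}^2\le\Big(\int(|a|+|b|)^{p-2}|a-b|^2\Big)\,\|\,|a|+|b|\,\|_{L^p}^{2-p}.
\]
After the $t$-integration, and using $|\nabla^s u+t\nabla^s(w-u)|\le|\nabla^s u|+|\nabla^s w|$, this produces
\[
\|u-T_hu\|_{\widetilde{X}^{s,p}(\Omega)}^2\le C\big(\|u\|_{\widetilde X^{s,p}(\Omega)}+\|T_hu\|_{\widetilde X^{s,p}(\Omega)}\big)^{2-p}\big(\mathcal{J}(T_hu)-\mathcal{J}(u)\big).
\]

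The upper bound comes straight from the regularity hypothesis. Definition~\ref{Regularidad: Definición Regularidad Local} gives
\[
0\le\mathcal{J}(T_h u)-\mathcal{J}(u)\le|\mathcal{J}(T_hu)-\mathcal{J}(u)|\le\omega(u)\,|h|^\sigma .
\]
Combining this with the two lower bounds gives \eqref{eq: regularity and minimizers p>2} and \eqref{eq: regularity and minimizers p<2}.

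The main obstacle is the subquadratic case. There one must handle the $t$-integration carefully, since the weight $(|a|+|b|)^{p-2}$ degenerates and we only have an upper bound on $|a|+|b|$. The route is to bound the integrand from below pointwise by $c\,t\,(|\nabla^s u|+|\nabla^s w|)^{p-2}|\nabla^s(w-u)|^2$, integrate in $t$, and only then apply the reverse Hölder step above. The rest of the argument is identical to Savaré's Theorem 1, with $\nabla$ replaced by $\nabla^s$; the point that makes this work is that the energy depends on $u$ only through $\nabla^s u$, linearly.
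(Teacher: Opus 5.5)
Your proof is correct and follows essentially the same route as the paper. The paper gives no argument beyond citing Savaré's abstract result (Theorem~1 and Corollary~1 of \cite{savare1998regularity}) together with the monotonicity and coercivity properties of $\xi\mapsto|\xi|^{p-2}\xi$ noted in Remark~\ref{rem: coercividad}. You have written that argument out explicitly for $\nabla^s$: the energy gap is computed along the segment from $u$ to $T_h u$, you use $p$-coercivity for $p\ge 2$, the weighted $2$-coercivity plus H\"older for $1<p\le 2$, and the upper bound $\mathcal{J}(T_h u)-\mathcal{J}(u)\le\omega(u)\,|h|^{\sigma}$.
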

	
	\begin{remark}
	    This result provides the theoretical foundation for our regularity method. The core of the strategy consists of the following steps.
	We begin by covering $\Omega$ with a finite set of balls of radius $\rho$, where $\rho$ is given by the uniform cone property that $\Omega$ satisfies (cf. Proposition \ref{prop:uniform-cone}).
    We select one such ball  \( D_\rho(x_0)\), centered at \( x_0\), and define the directional cone \( C_\rho = \mathcal{C}_\rho(\mathbf{n}(x_0), \theta) \). In the case \( p \in [2, \infty) \), if we take \( u \in \widetilde{X}^{s,p}(\Omega) \), then by combining Proposition~\ref{Resularidad: Reiteración Besov} and the identity~\eqref{eq: regularity and minimizers p>2} with the facts that \( T_h u = u_h \) on \( D_\rho(x_0) \), $u = T_h u = 0$ on $\Omega^c$ and Definition \ref{def:Lions-Calderon-domain},
     we obtain the following estimate:
	\begin{equation} \label{eq:reg-functional-pge2}
	\begin{aligned}
		|u|_{B_{p,\infty}^{s + \sigma/p}(D_\rho(x_0))}^p 
		& \lesssim \sup_{h \in D} \frac{\|u - u_h\|_{X^{s,p}(D_\rho(x_0))}^p }{ |h|^\sigma } \\
		& = \sup_{h \in D} \frac{ \| u - T_h u \|_{X^{s,p}(D_\rho(x_0))}^p}{ |h|^\sigma }\\
        &\lesssim \sup_{h \in D} \frac{ \| u - T_h u \|_{\widetilde{X}^{s,p}(\Omega)}^p}{ |h|^\sigma }\\
		& \lesssim \omega(u; \mathcal{J}, T, D, \sigma).
	\end{aligned}
	\end{equation}

	This chain of inequalities reveals the essential mechanism of the approach: the \((T, D, \sigma)\)-regularity of the functional \( \mathcal{J} \) implies local Besov regularity of the minimizers when \( p \geq 2 \). The case \( 1<p<2 \) can be handled in an analogous manner. Using 
	Proposition~\ref{Resularidad: Reiteración Besov} together with the identity 
	\eqref{eq: regularity and minimizers p<2}, we obtain
	\begin{equation} 
		\label{eq:reg-functional-p<2}
		\begin{aligned}
			|u|_{B_{p,\infty}^{s+\sigma/2}(D_\rho(x_0))}^{2} 
			&\lesssim\sup_{h \in D} \frac{|u - u_h|_{X^{s,p}(D_\rho(x_0))}^{2}}{|h|^\sigma} \\
			&\lesssim\sup_{h \in D} \frac{\|u - T_h u\|_{X^{s,p}(\Omega)}^{2}}{|h|^\sigma} \\
			&\lesssim \bigl(\|u\|_{\widetilde{X}^{s,p}(\Omega)} 
			+\|T_h u\|_{\widetilde{X}^{s,p}(\Omega)}\bigr)^{2-p}
			\,\omega(u; \mathcal{J}, T, D, \sigma).
		\end{aligned}
	\end{equation}
	This shows that, once we verify that \( \mathcal{J} \) satisfies  the \((T, D, \sigma)\)-regularity property of Definition~\ref{Regularidad: Definición Regularidad Local} for some \( \sigma \in (0,1] \), we obtain the corresponding  regularity of solutions.
	\end{remark}
	
	\section{Regularity of the functionals} \label{sec:functionals}
	
	In this section, we study separately the local regularity of the functionals \(\mathcal{J}_1\) and \(\mathcal{J}_2\) from \eqref{eq:functionals} in the sense of Definition~\ref{Regularidad: Definición Regularidad Local}. To this end, given an arbitrary point \(x_0 \in \mathbb{R}^N\), we denote by
	\[
	C_\rho = C_\rho(x_0), \quad D_\rho = D_\rho(x_0),
	\]
	the directional cone and the ball of radius \(\rho\) centered at \(x_0\), respectively. A key property we will exploit is that, for any \(v \in \widetilde{X}^{s,p}(\Omega)\) and any \(h \in C_\rho\), the translated function \(T_h v\) remains in \(\widetilde{X}^{s,p}(\Omega)\). This allows us to evaluate \(\mathcal{J}(T_h v)\) and compare it with \(\mathcal{J}(v)\).
	
	To complete the implementation of the regularity method, it remains to verify the \((T, D, \sigma)\)-regularity condition for each of the operators appearing in the functional~\eqref{Regularidad: Operadores Formulación Debil} with respect to the localized translation operators, see definition \eqref{Regularidad: Operador de Traslación Localizada}. Since $\text{supp}(\phi) \subset D_{2\rho}(x_0)$, the following identity and estimate for \(\mathcal{J}_1\) hold:
\begin{align}
\mathcal{J}_{1}\bigl(T_{h} v\bigr)-\mathcal{J}_{1}(v)
&= \int_{\mathbb{R}^N} f\,\phi\,(v_h-v)\,dx
= \int_{\mathbb{R}^N} v\,\bigl(f_{-h}\phi_{-h}-f\phi\bigr)\,dx \nonumber\\
&\le \int_{D_{3\rho}(x_0)} |v|\,\bigl|f_{-h}\phi_{-h}-f\phi\bigr|\,dx.
\end{align}

Combining the above estimate with Lemma~\ref{lemm: diferencia-traslacion-v}, and adapting the argument of \cite[Proposition~3.1]{borthagaray2024quasi} to our framework, we arrive at the following result for the linear term in the energy functional.

	\begin{proposition} \label{prop:reg-J1}
		Let \( q \in(1, \infty], \, p \in (1, \infty), \, \sigma \in(0,1],\) and \( f \in B_{p', q^{\prime}}^{\gamma}(\Omega) \) for some \( \gamma \in(-1,\sigma) \), then \( \mathcal{J}_{1} \) defined in \eqref{eq:functionals}  is \( \left(T, \mathcal{O}_{\rho}(x_0), \sigma\right) \)-regular in \( \dot{B}_{p, q}^{\sigma-\gamma}(\Omega) \) and
		\begin{equation}
			\label{Regularidad:Regularidad F_1}
			\sup _{h \in C_{\rho}} \frac{\mathcal{J}_{1}\left(T_{h} v\right)-\mathcal{J}_{1}(v)}{|h|^{\sigma}} \lesssim\|f\|_{B_{p^{\prime}, q^{\prime}}^{\gamma}(\Omega)}\|v\|_{B_{p, q}^{\sigma-\gamma}\left(D_{3 \rho}(x_0)\right)}, 
		\end{equation}
		for all \( v \in \dot{B}_{p, q}^{\sigma-\gamma}(\Omega). \)
	\end{proposition}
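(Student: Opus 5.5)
Note that in this statement \(\mathcal J_1\) denotes the linear term \(v\mapsto\langle f,v\rangle\): this is the functional used in the identity displayed just before the proposition. The plan starts from that identity. For \(h\in C_\rho\subset\mathcal O_\rho(x_0)\), \(T_hv\in\widetilde X^{s,p}(\Omega)\) and \(T_hv-v=\phi(v_h-v)\), so
\[
\mathcal J_1(T_hv)-\mathcal J_1(v)=\langle f,\phi(v_h-v)\rangle=\langle f,\, T_hv-v\rangle .
\]
Since \(\operatorname{supp}\phi\subset D_{2\rho}(x_0)\), the pairing only involves \(T_hv-v\) on \(D_{2\rho}(x_0)\). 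We view \(f\in B^{\gamma}_{p',q'}(\Omega)\), extended by zero to \(\mathbb R^N\), as an element of the dual of \(\dot B^{-\gamma}_{p,q}\). This is the standard duality between Besov spaces of opposite order and conjugate exponents; it requires \(q'<\infty\), i.e. \(q>1\), which matches the hypothesis. Duality then gives
\[
|\mathcal J_1(T_hv)-\mathcal J_1(v)|\lesssim \|f\|_{B^{\gamma}_{p',q'}(\Omega)}\,\|T_hv-v\|_{B^{-\gamma}_{p,q}(D_{2\rho}(x_0))}.
\]
Some care is needed because \(T_hv-v\) is supported in \(\overline\Omega\cap D_{2\rho}\) (as \(v,T_hv\in\widetilde X^{s,p}(\Omega)\)), so its localized norm is of dotted type. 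I would handle this with the localization Lemma~\ref{lemma:loc-Besov} and a fixed smooth cut-off equal to \(1\) on \(D_{2\rho}\).

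The key step is to apply Lemma~\ref{lemm: diferencia-traslacion-v} with smoothness index \(-\gamma\) and shift \(\sigma\):
\[
\|v-T_hv\|_{B^{-\gamma}_{p,q}(D_{2\rho}(x_0))}\lesssim |h|^{\sigma}\,\|v\|_{B^{\sigma-\gamma}_{p,q}(D_{3\rho}(x_0))}.
\]
This needs \(-\gamma\in(-1,1)\), which holds when \(\gamma\in(-1,1)\). Dividing by \(|h|^\sigma\) and taking the supremum over \(h\in C_\rho\) gives \eqref{Regularidad:Regularidad F_1}. Finiteness of the supremum is exactly the \((T,\mathcal O_\rho(x_0),\sigma)\)-regularity on \(\dot B^{\sigma-\gamma}_{p,q}(\Omega)\).

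The main obstacle is the range of \(\gamma\). Lemma~\ref{lemm: diferencia-traslacion-v} only covers \(-\gamma\in(-1,1)\), but the statement allows \(\gamma\in[1,\sigma)\) only if \(\sigma>1\), which is excluded. So \(\gamma<\sigma\le 1\) gives \(-\gamma\in(-1,1)\), and the only genuinely delicate point is \(\gamma\ge0\) close to the endpoint. For \(\gamma\) near \(0\) I would instead use the alternative route from the displayed identity, moving the translation onto \(f\phi\):
\[
\int v\,(f_{-h}\phi_{-h}-f\phi)\,dx .
\]
One then writes \(f_{-h}\phi_{-h}-f\phi=(f_{-h}-f)\phi_{-h}+f(\phi_{-h}-\phi)\). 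The first term is estimated by the translation version of Lemma~\ref{lemm: diferencia-traslacion-v} in \(B^{\gamma-\sigma}_{p',q'}\). The second term is \(O(|h|)\) in \(B^{\gamma}_{p',q'}\) by the Lipschitz continuity of \(\phi\), which suffices since \(\sigma\le1\) and \(|h|\le\rho\le1\) give \(|h|\le|h|^\sigma\). This is the adaptation of \cite[Proposition~3.1]{borthagaray2024quasi} mentioned in the text. The other point I would check is that the constants from the duality pairing on the localized domain do not depend on \(h\), which follows because the cut-off is fixed.
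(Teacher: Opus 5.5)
Your argument is correct and is essentially the paper's, which defers to the argument of \cite[Proposition~3.1]{borthagaray2024quasi}. You pair $T_hv-v=\phi(v_h-v)$ against $f$ through the bounded pairing between $B^{\gamma}_{p',q'}(\Omega)$ and $\dot B^{-\gamma}_{p,q}(\Omega)$, realized with an arbitrary extension of $f$ rather than its zero extension, and then apply Lemma~\ref{lemm: diferencia-traslacion-v} with index $-\gamma$; the paper's identity moving the shift onto $f\phi$ is only the transposed form of this step. Since $\gamma<\sigma\le 1$ already gives $-\gamma\in(-1,1)$ for every admissible $\gamma$, your separate treatment of $\gamma$ near $0$ is unnecessary, as nothing degenerates there.
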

	    
	Next, we show that the operator \( \mathcal{J}_2 \) is \( \left(T, \mathcal{O}_\rho(x_0), \sigma\right) \)-regular.
\begin{proposition}
	\label{Regularidad: Regularidad F_2 Proposition}
	Let \( s \in (0,1) \) and \( \sigma \in (0,1] \). Then, the functional \( \mathcal{J}_2\), defined in \eqref{eq:functionals}, is \( (T, \mathcal{O}_\rho(x_0), \sigma) \)-regular on \( \widetilde{X}^{s,p}(\Omega) \) for every \( x_0 \in \Omega \) and \( \rho > 0 \). That is, for every \( v \in \widetilde{X}^{s,p}(\Omega) \), the following inequality holds:
	\begin{equation}
		\label{Regularidad: Regularidad F_2}
		\sup_{h \in \mathcal{O}_\rho(x_0)} \frac{\mathcal{J}_2(T_h v) - \mathcal{J}_2(v)}{|h|^{\sigma }} \lesssim \|v\|_{\dot{B}^\sigma_{p,\infty}(\Omega)} \|v\|_{\widetilde{X}^{s,p}(\Omega)}^{p-1} + \|\nabla^s v\|_{L^p(D_{3\rho}(x_0);\mathbb{R}^N)}^p.
	\end{equation}
\end{proposition}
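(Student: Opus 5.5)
Note the paper's labels are swapped: here $\mathcal{J}_2$ denotes the energy term $\frac1p\int|\nabla^s w|^p$. We start from Lemma~\ref{Regularidad: Traslación del gradiente}, which gives
\[
\nabla^s(T_h v)=T_h(\nabla^s v)+\mathcal{C}_{\phi,v}.
\]
The function $\xi\mapsto\frac1p|\xi|^p$ is convex with derivative $|\xi|^{p-2}\xi$. Hence, pointwise,
\[
\tfrac1p|\nabla^s T_h v|^p\le \tfrac1p|T_h\nabla^s v|^p + |\nabla^s T_h v|^{p-2}\nabla^s T_h v\cdot \mathcal{C}_{\phi,v}.
\]
Since $T_h\nabla^s v=\phi(\nabla^s v)_h+(1-\phi)\nabla^s v$ is a pointwise convex combination, convexity again yields
\[
\tfrac1p|T_h\nabla^s v|^p\le \phi\,\tfrac1p|(\nabla^s v)_h|^p+(1-\phi)\tfrac1p|\nabla^s v|^p .
\]
Integrating, the difference of the first terms is $\frac1p\int\bigl(\phi_{-h}-\phi\bigr)|\nabla^s v|^p\,dx$ after the change of variables $x\mapsto x-h$. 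Because $|\phi_{-h}-\phi|\le\|\nabla\phi\|_\infty|h|$ and this function is supported in a $|h|$-neighbourhood of $D_{2\rho}(x_0)$, contained in $D_{3\rho}(x_0)$ for $|h|\le\rho$, the term is bounded by $C|h|\,\|\nabla^s v\|^p_{L^p(D_{3\rho})}$. Since $|h|\le\rho\le 1$ and $\sigma\le1$, we have $|h|\le|h|^\sigma$. This produces the second term on the right-hand side of \eqref{Regularidad: Regularidad F_2}.

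For the remainder term we use Hölder's inequality:
\[
\int|\nabla^s T_h v|^{p-1}|\mathcal{C}_{\phi,v}|\,dx\le\|\nabla^s T_h v\|_{L^p}^{p-1}\,\|\mathcal{C}_{\phi,v}\|_{L^p}.
\]
By the identity above and the triangle inequality, $\|\nabla^s T_h v\|_{L^p}\le\|T_h\nabla^s v\|_{L^p}+\|\mathcal{C}_{\phi,v}\|_{L^p}\lesssim\|v\|_{\widetilde X^{s,p}(\Omega)}$. Here we use that translation preserves $L^p$ norms, that $0\le\phi\le1$, and Lemma~\ref{Regularidad: Cota conmutador} together with $\|v_h-v\|_{L^p}\le2\|v\|_{L^p}\lesssim\|\nabla^s v\|_{L^p}$ by the Poincaré inequality. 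Then Lemma~\ref{Regularidad: Cota conmutador} combined with \eqref{eq:traslacion_Besov} (with $q=\infty$, applied to $v$ extended by zero, which lies in $\dot B^\sigma_{p,\infty}(\Omega)$) gives $\|\mathcal{C}_{\phi,v}\|_{L^p}\lesssim\|v_h-v\|_{L^p(\mathbb{R}^N)}\lesssim|h|^\sigma\|v\|_{\dot B^{\sigma}_{p,\infty}(\Omega)}$. This produces the first term. For $\sigma=1$ the translation bound is the standard $W^{1,p}$ estimate; if the $B^1_{p,\infty}$ version is delicate, one can use $\sigma<1$ and take a limit.

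Dividing by $|h|^\sigma$ and taking the supremum over $h\in\mathcal{O}_\rho(x_0)$ gives \eqref{Regularidad: Regularidad F_2}. This is a one-sided estimate, which is all the statement asserts, since it bounds $\mathcal{J}_2(T_hv)-\mathcal{J}_2(v)$ without absolute value. The main obstacle I expect is the first-term cancellation: we must check rigorously that the change of variables is legitimate, namely that $(\nabla^s v)_h$ denotes the plain translate of $\nabla^s v$ on all of $\mathbb{R}^N$ (not a zero extension) and that $\phi\,(\nabla^s v)_h$ lives near $D_{2\rho}$. A further issue is that the localization to $D_{3\rho}$ requires $|h|\le\rho$, which holds because $\mathcal{O}_\rho(x_0)\subset D_\rho(0)$.
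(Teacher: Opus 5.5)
Your proposal is correct and is essentially the paper's proof. You split $\nabla^s(T_h v)=T_h(\nabla^s v)+\mathcal{C}_{\phi,v}$ and control the commutator contribution by the supporting-hyperplane inequality for $\xi\mapsto\frac1p|\xi|^p$ plus H\"older; this is just a proof of the elementary inequality \eqref{Regularidad: Regularidad de F_2 cota auxiliar savare 1} that the paper quotes. You combine it, as the paper does, with Lemma~\ref{Regularidad: Cota conmutador}, the Poincar\'e inequality and \eqref{eq:traslacion_Besov}. The remaining term you control by pointwise convexity of $T_h(\nabla^s v)$ and the change of variables giving $\frac1p\int(\phi_{-h}-\phi)|\nabla^s v|^p\,dx$, which is exactly \eqref{Regularidad: Regularidad de F_2 cota auxiliar savare 2}.

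Your remarks that $(\nabla^s v)_h$ must be the plain translate and that $|h|<\rho$ keeps everything inside $D_{3\rho}(x_0)$ make explicit what the paper leaves implicit. The one loose end is the endpoint $\sigma=1$: there $\|v_h-v\|_{L^p}\lesssim|h|\,\|v\|_{\dot B^1_{p,\infty}(\Omega)}$ fails in general, and letting $\sigma\uparrow1$ would at best give a bound of order $|h|\log(1/|h|)$, so your limit fix does not work. This gap is shared with the paper's own appeal to \eqref{eq:traslacion_Besov}, and it is harmless for Theorems~\ref{teo-reg-caso-opt} and~\ref{teo-reg-subopt}, which only use $\sigma<1$.
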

\begin{proof}
  In first place, we have the following elementary inequality, with a constant $C = C(p)$, valid for all $u, v \in L^p(\mathbb{R}^N)$:
	\begin{equation}
		\label{Regularidad: Regularidad de F_2 cota auxiliar savare 1}
    \int_{\mathbb{R}^N} \big( |u(x) + v(x)|^p - |u(x)|^p \big) \, dx \leq C \|v\|_{L^p(\mathbb{R}^N)} \left( \|v\|_{L^p(\mathbb{R}^N)} + \|u\|_{L^p(\mathbb{R}^N)} \right)^{p-1}.
	\end{equation}
Additionally, noting that $T_h u = \phi u_h + (1-\phi) u$, using the convexity of \( F(x) = |x|^p \), performing a change of variables, and considering the supports of \( u \) and \( \phi \), we obtain
	\begin{equation}
		\label{Regularidad: Regularidad de F_2 cota auxiliar savare 2} \begin{split}
		\int_{\mathbb{R}^N} \left( |T_h u(x)|^p - |u(x)|^p \right)\, dx
        & \le \int_{D_{2\rho(x_0)}} \phi(x) \left(|u_h(x)|^p - |u(x)|^p \right) \, dx \\
        & \leq C |h| \|u\|_{L^p(D_{3\rho}(x_0))}^p.
\end{split}	\end{equation}

 Now, let \( v \in \widetilde{X}^{s,p}(\Omega) \). By Lemma~\ref{Regularidad: Traslación del gradiente}, 
 we can split $\mathcal{J}_2(T_h v) - \mathcal{J}_2(v)$ as
\[
\begin{aligned}
	  J_1 + J_2 :=& \int_{\mathbb{R}^N} \left( \left|T_h(\grads v) + \mathcal{C}_{\phi,v}\right|^p - \left|T_h(\grads v)\right|^p  \right)\, dx \\ &+ \int_{\mathbb{R}^N } \left(\left|T_h(\grads v)\right|^p - \left|\grads v\right|^p \right)\, dx   
\end{aligned}
\]

From inequality~\eqref{Regularidad: Regularidad de F_2 cota auxiliar savare 1}, we can estimate \( J_1 \) as
\[
J_1 \leq C\,\|\mathcal{C}_{\phi,v}\|_{L^p(\mathbb{R}^N)}\left( \|\mathcal{C}_{\phi,v}\|_{L^p(\mathbb{R}^N)} + \|T_h(\grads v)\|_{L^p(\mathbb{R}^N)}\right)^{p-1}.
\]

Using Lemma~\ref{Regularidad: Cota conmutador} together with \eqref{eq:traslacion_Besov},
we have
\begin{equation} \label{Regularidad: Teorema de regularidad F_2 Cota conmutador Besov}
\begin{aligned}
\|\mathcal{C}_{\phi,v}\|_{L^p(\mathbb{R}^N)} \leq C \| v_h-v \|_{L^{p}(\mathbb{R}^N)}  
	\leq C |h|^{\sigma} |v|_{B^{\sigma}_{p,\infty}(\Omega^{|h|})},
\end{aligned}    
\end{equation}
substituting estimates~\eqref{Regularidad: Teorema de regularidad F_2 Cota conmutador con norma Lp} and~\eqref{Regularidad: Teorema de regularidad F_2 Cota conmutador Besov} into the bound for \( J_1 \), we obtain the following chain of inequalities:
\[
\begin{aligned}
J_1 &\le C \,\|\mathcal{C}_{\phi,v}\|_{L^p(\mathbb{R}^N)}
\Bigl(\|\mathcal{C}_{\phi,v}\|_{L^p(\mathbb{R}^N)} + \|T_h(\nabla^s v)\|_{L^p(\mathbb{R}^N)}\Bigr)^{p-1}\\
&\le C\,|h|^{\sigma}\,|v|_{B^{\sigma}_{p,\infty}(\Omega^{|h|})}
\Bigl(\|v\|_{L^{p}(\Omega)} + \|T_h(\nabla^s v)\|_{L^p(\mathbb{R}^N)}\Bigr)^{p-1}.
\end{aligned}
\]

We use $ \|T_h(\nabla^s v)\|_{L^p(\mathbb{R}^N)} \le C  \|\nabla^s v\|_{L^p(\mathbb{R}^N)} = C \|v\|^{\,p-1}_{\widetilde{X}^{s,p}(\Omega)}$ and the Poincar\'e inequality (e.g. \cite[Theorem 2.9]{bellido2020gammaconvergencepolyconvexfunctionalsinvolving})
to obtain
\begin{align*}
    J_1 
        &\leq C\, |h|^{\sigma} \, |v|_{B^{\sigma}_{p,\infty}(\Omega^{|h|})} \, \|v\|^{\,p-1}_{\widetilde{X}^{s,p}(\Omega)}.
\end{align*}
\
Regarding the second term,
\[
J_2 = \int_{\mathbb{R}^N} \left( \left|T_h(\grads v)\right|^p - \left|\grads v\right|^p\right) \, dx,
\]
we apply inequality~\eqref{Regularidad: Regularidad de F_2 cota auxiliar savare 2} to deduce
\[
J_2 \leq C|h|\|\grads v\|^p_{L^p(D_{3\rho}(x_0);\mathbb{R}^N)}.
\]

Combining the bounds for \( J_1 \) and \( J_2 \), we deduce the following estimate for \( I_1 \):
\[
I_1 \leq C |h|^{\sigma} \left[ |v|_{B^{\sigma}_{p,\infty}(\Omega^{|h|})} \|v\|^{p-1}_{\widetilde{X}^{s,p}(\Omega)}  + |h|^{1-\sigma} \|\grads v\|^p_{L^p(D_{3\rho}(x_0);\mathbb{R}^N)} \right].
\]

Thus, grouping terms appropriately, we conclude the following estimate for the functional \( \mathcal{J}_2 \):
\[
\sup_{h \in \mathcal{O}_\rho\left(x_0\right)} \frac{\mathcal{J}_{2}\left(T_h v\right) - \mathcal{J}_{2}(v)}{|h|^{\sigma}} \lesssim  \|v\|_{\dot{B}^{\sigma}_{p,\infty}(\Omega)}\|v\|^{p-1}_{\widetilde{X}^{s,p}(\Omega)}+\|\grads v\|^{p}_{L^{p}(D_{3\rho}(x_0);\mathbb{R}^N)}.
\]
\end{proof}

	\section{Regularity of solutions}
    \label{sec:regularity}
	Having verified the regularity of each component functional, we are now prepared to derive regularity results for minimizers of the energy functional given in \eqref{Regularidad: Operadores Formulación Debil}. We obtain estimates in the Besov scale.
	
	\begin{theorem}
		\label{teo-reg-caso-opt}
		Let \(\Omega\) be a bounded Lipschitz domain, \( \sigma \in (0,1)\)  and \(u \in \widetilde{X}^{s,p}(\Omega)\) be a weak solution to \eqref{Regularidad:Problema Dirichlet}. If \(p \geq 2, s \in [\frac{1}{p'},1) \) and \(f \in B_{p^{\prime},1}^{-s+\frac{1}{p^{\prime}}}(\Omega)\) then \(u \in \dot{B}_{p, \infty}^{s+\frac{1}{p}}(\Omega)\) and
			\begin{equation}
				\|u\|_{\dot{B}_{p, \infty}^{s+\frac{1}{p}}(\Omega)} \lesssim\|f\|_{B_{p^{\prime}, 1}^{-s+\frac{1}{p^{\prime}}}(\Omega)}^{\frac1{p-1}}.
			\end{equation}

		If \(  1 < p <2, \,  s \in[\frac{1}{2},1)  \) and \(f \in B_{p^{\prime},1}^{-s+\frac{1}{2}}(\Omega)\), then  \(u \in \dot{B}_{p, \infty}^{s+\frac{1}{2}}(\Omega)\) and
			\begin{equation}
				\|u\|_{\dot{B}_{p, \infty}^{s+\frac12}(\Omega)} \lesssim \|f\|_{X^{-s,p^{\prime}}(\Omega)}^{\frac{2-p}{p-1}}\|f\|_{B_{p^{\prime}, 1}^{-s+\frac{1}{2}}(\Omega)}.
			\end{equation}
		
Above, all hidden constants depend on \( N,s,p, \) and \( \Omega. \)
	\end{theorem}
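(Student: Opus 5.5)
We follow the Savaré scheme: prove that $\mathcal{J}$ is $(T,\mathcal{O}_\rho(x_0),\sigma)$-regular with $\sigma=1$, and then invoke Proposition~\ref{Regularidad: Regularity and minimizers}.

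\textbf{Localization.} By Proposition~\ref{prop:uniform-cone} we fix $\rho$ and a finite covering of $\Omega^{\delta}$ by balls $D_\rho(x_j)$. On each ball we use the cone $C_\rho=\mathcal{C}_\rho(\boldsymbol{n}(x_j),\theta)\subset\mathcal{O}_\rho(x_j)$ together with the equivalence \eqref{Regularidad: Equivalecia normas Besov}. Lemma~\ref{lemma:loc-Besov} then reduces the global estimate to bounding $|u|_{B^{s+\sigma/p}_{p,\infty}(D_\rho(x_j))}$, respectively $|u|_{B^{s+\sigma/2}_{p,\infty}(D_\rho(x_j))}$, uniformly in $j$. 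By \eqref{eq:reg-functional-pge2} and \eqref{eq:reg-functional-p<2}, each of these is controlled by $\omega(u;\mathcal{J},T,C_\rho,\sigma)$. With $\sigma=1$ this gives exactly the target exponents $s+\frac1p$ and $s+\frac12$.

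\textbf{Regularity of the two pieces with $\sigma=1$.} Because $T_h u$ is again admissible and $\mathcal{J}(u)\le\mathcal{J}(T_hu)$, only upper bounds for $\mathcal{J}(T_hu)-\mathcal{J}(u)$ are needed.

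\emph{Energy term.} We apply Proposition~\ref{Regularidad: Regularidad F_2 Proposition} with $\sigma=1$. Its right-hand side contains $\|u\|_{\dot B^1_{p,\infty}}$, which is not available, so we do not use the statement verbatim. Instead, in the bound for $J_1$ we estimate the commutator by $\|u_h-u\|_{L^p}\lesssim |h|^{s}\|u\|_{\widetilde X^{s,p}}$ via Lemma~\ref{Regularidad: Lemma estimativa de error}. The term $J_2$ is already $O(|h|)\,\|\nabla^s u\|^p_{L^p}$. If the commutator contribution turns out to be only $O(|h|^{s})$, we refine Lemma~\ref{Regularidad: Cota conmutador}: expanding $\phi(x)-\phi(y)$ and moving the difference onto $\phi$ should give a bound of order $|h|\,\|u\|_{L^p}$ up to a constant depending on $\|\phi\|_{W^{2,\infty}}$. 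The outcome we aim for is a contribution $\lesssim |h|\,\|u\|^p_{\widetilde X^{s,p}(\Omega)}$.

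\emph{Source term.} We apply Proposition~\ref{prop:reg-J1} with $\sigma=1$ and $q=\infty$, so $q'=1$. For $p\ge2$ we take $\gamma=-s+\frac1{p'}$. This lies in $(-1,1)$, and $1-\gamma=s+\frac1p$. The resulting bound is $\|f\|_{B^{\gamma}_{p',1}}\|u\|_{B^{s+1/p}_{p,\infty}(D_{3\rho})}$, and the hypothesis $s\ge\frac1{p'}$ guarantees $\gamma\le 0$. For $p<2$ we take $\gamma=-s+\frac12$, giving the norm $B^{s+1/2}_{p,\infty}$.

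\textbf{Absorption and a priori bounds.} Summing over the covering, the case $p\ge2$ gives
\[
\|u\|^p_{\dot B^{s+1/p}_{p,\infty}}\lesssim \|f\|_{B^{\gamma}_{p',1}}\|u\|_{\dot B^{s+1/p}_{p,\infty}}+\|u\|^p_{\widetilde X^{s,p}}.
\]
Young's inequality absorbs the first term. For the second we test the weak formulation \eqref{eq:weak-formulation} with $u$, which gives $\|u\|^{p-1}_{\widetilde X^{s,p}}\le\|f\|_{X^{-s,p'}}$. Then we use $\|f\|_{X^{-s,p'}}\lesssim\|f\|_{B^{\gamma}_{p',1}}$, which holds by duality with Lemma~\ref{Regularidad: Inmersión Lions Calderon Besov} because $\gamma\ge -s$. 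Altogether this yields the power $\frac1{p-1}$.

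For $1<p<2$ the left-hand side is squared and carries the prefactor $(\|u\|+\|T_hu\|)^{2-p}\lesssim\|f\|_{X^{-s,p'}}^{\frac{2-p}{p-1}}$. The same absorption gives the stated product bound.

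\textbf{Main obstacle.} The delicate step is the energy estimate at $\sigma=1$: showing that the commutator contribution is $O(|h|)$ and not merely $O(|h|^{s})$. A second, related worry is the circularity that arises if $\|u\|_{\dot B^{s+\sigma/p}}$ is not known a priori to be finite. To handle that, we would first run the argument with $\sigma<1$, and then either iterate the estimate or absorb terms using the difference-quotient norms restricted to $|h|\le\rho$, which are finite.
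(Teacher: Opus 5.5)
Your Savaré framework (localization, cones, Proposition~\ref{Regularidad: Regularity and minimizers}, Proposition~\ref{prop:reg-J1} with $q'=1$, the stability bound) matches the paper's. There is, however, a genuine gap at the step you flag yourself: the $(T,\mathcal{O}_\rho(x_0),1)$-regularity of the energy term.

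\textbf{The commutator step.} Lemmas~\ref{Regularidad: Cota conmutador} and~\ref{Regularidad: Lemma estimativa de error} only give $\|\mathcal{C}_{\phi,u}\|_{L^p}\lesssim |h|^{s}\|u\|_{\widetilde X^{s,p}(\Omega)}$, and the refinement you sketch is false as stated. Note that $\mathcal{C}_{\phi,u}=\nabla^s(\phi\,\delta_h u)-\phi\,\nabla^s(\delta_h u)$ with $\delta_h u=u_h-u$. It is the commutator $[\nabla^s,\phi]$ applied to $\delta_h u$, and this commutator has order $s-1$, not $-1$.
\begin{itemize}
\item Test it on $u$ oscillating at frequency $\lambda\sim|h|^{-1}$ along $h$, supported in a region of $\Omega$ where $\nabla\phi\neq0$. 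Then $\|u_h-u\|_{L^p}\sim\|u\|_{L^p}$, while $\|\mathcal{C}_{\phi,u}\|_{L^p}\sim|h|^{1-s}\|u\|_{L^p}\gg|h|\,\|u\|_{L^p}$.
\item Moving the difference onto the kernel does not help either. The differentiated kernel has size $|x-y|^{-N-s}$, which is not locally integrable, so kernel bounds only return $|h|^{1-s}\|u\|_{L^p}$.
\end{itemize}
The bound you actually need, $\|\mathcal{C}_{\phi,u}\|_{L^p}\lesssim|h|\,\|u\|_{\widetilde X^{s,p}(\Omega)}$, would require a genuine commutator estimate (for instance via $\nabla^s=\nabla I_{1-s}$ and pseudodifferential calculus). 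You do not supply it, and the paper does not use it.

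A symptom of this gap: your argument never really uses $s\ge\frac1{p'}$ (resp.\ $s\ge\frac12$). Proposition~\ref{prop:reg-J1} only needs $\gamma\in(-1,\sigma)$, so ``$\gamma\le0$'' plays no role. If the commutator bound held, your route would give the result for every $s\in(0,1)$, which is more than the theorem claims.

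\textbf{The circularity.} Your second remedy fails. The difference quotients over $|h|\le\rho$ are exactly the quantity whose finiteness is unknown, so they cannot be assumed finite for absorption.

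\textbf{How the paper proceeds.} It avoids both problems by never taking $\sigma=1$. It keeps the crude bound $\|\mathcal{C}_{\phi,u}\|_{L^p}\lesssim\|u_h-u\|_{L^p}\lesssim|h|^{\sigma}\|u\|_{\dot B^{\sigma}_{p,\infty}(\Omega)}$ from Proposition~\ref{Regularidad: Regularidad F_2 Proposition} and bootstraps. With $\gamma=-s+\frac1{p'}$, the hypothesis $s\ge\frac1{p'}$ gives the embedding $\dot B^{\sigma+s-1/p'}_{p,\infty}(\Omega)\hookrightarrow\dot B^{\sigma}_{p,\infty}(\Omega)$. Hence the commutator term and the source term are controlled by the same norm:
\[
\|u\|^p_{\dot B^{s+\sigma/p}_{p,\infty}(\Omega)}\lesssim\bigl(\|u\|^{p-1}_{\widetilde X^{s,p}(\Omega)}+\|f\|_{B^{\gamma}_{p',1}(\Omega)}\bigr)\|u\|_{\dot B^{\sigma+s-1/p'}_{p,\infty}(\Omega)}+\|u\|^p_{\widetilde X^{s,p}(\Omega)}.
\]
Choose $\sigma_{k+1}=\frac{\sigma_k}{p}+\frac1{p'}$ with $\sigma_0=0$. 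Then the right-hand side at step $k+1$ involves only $\|u\|_{\dot B^{s+\sigma_k/p}_{p,\infty}(\Omega)}$, which was bounded at step $k$, so nothing has to be absorbed. Young's inequality keeps the constants $\Lambda_k$ uniformly bounded, and $\sigma_k=1-p^{-k}\to1$ gives the estimate in the limit. For $1<p<2$ the same scheme with $\sigma_{k+1}=\frac{\sigma_k}{2}+\frac12$ is where $s\ge\frac12$ enters.

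To repair your proposal, replace the one-shot $\sigma=1$ step with this iteration.
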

	\begin{proof}
		We begin by recalling that the Dirichlet problem \eqref{Regularidad:Problema Dirichlet} has a unique solution for any \(f \in X^{-s,p'}(\Omega)\). In particular, when the data satisfies our assumption \(f \in B_{p',1}^{-s+\tau}(\Omega)\), $\tau = \max \{ \frac{1}{p'}, \frac12 \}$,
        the continuous embedding \(B_{p',1}^{-s+\tau}(\Omega) \hookrightarrow X^{-s,p'}(\Omega)\) guarantees well-posedness, and we have the stability estimate
		\begin{equation}
			\label{Regularidad: Cota de Estabilidad}
			\|u\|_{\widetilde{X}^{s,p}(\Omega)} \lesssim \|f\|_{X^{-s,p'}(\Omega)}^{\frac{1}{p-1}} \lesssim \|f\|_{B_{p',1}^{-s+\tau}(\Omega)}^{\frac{1}{p-1}}.
		\end{equation}
		
		Since \(\Omega\) is a Lipschitz domain, 
        by Proposition \ref{prop:uniform-cone} there exist \(\rho > 0\) and \(\theta \in (0, \pi/2)\) such that
        $\mathcal{C}_\rho(\boldsymbol{n}(x), \theta) \subset \mathcal{O}_\rho(x),
		$ where $\boldsymbol{n} \colon \mathbb{R}^N \to S^{N-1}$ is a measurable map.
       Using this geometric property, we construct a finite covering of the expanded domain \(\Omega^\rho\) by balls \(D_\rho(x_j)\) of radius \(\rho\), centered at points \(\{x_j\}_{j=1,\ldots,J} \subset \mathbb{R}^N\). For each ball \(D_\rho(x_j)\), we consider the associated cone of admissible translation directions \(\mathcal{C}_j := \mathcal{C}_\rho(\mathbf{n}(x_j), \theta)\). 
		
		For exponents \(\sigma \in (0,1]\) and \(\gamma \in (-1,\sigma)\), we analyze the regularity properties of the functional \(\mathcal{J}\) through its decomposition into \(\mathcal{J}_1\) and \(\mathcal{J}_2\) as in \eqref{eq:functionals}. The subadditivity of the regularity modulus allows us to combine the estimates from \eqref{Regularidad:Regularidad F_1} for \(\mathcal{J}_1\) and \eqref{Regularidad: Regularidad F_2} for \(\mathcal{J}_2\), yielding the following bound:
	\begin{equation}\label{Regularidad: Cota de regularidad para F}
        \begin{aligned}
			\omega\left(u ; \mathcal{J}, T, \mathcal{C}_{j}, \sigma\right) 
			\lesssim &\|u\|_{\dot{B}^{\sigma}_{p,\infty}(\Omega)}\|u\|^{p-1}_{\widetilde{X}^{s,p}(\Omega)}+\|\grads u\|^{p}_{L^{p}(D_{3\rho}(x_0); \mathbb{R}^N)} \\ & +\|f\|_{B_{p^{\prime}, 1}^{\gamma}\left(\Omega\right)}\|u\|_{B_{p, \infty}^{\sigma-\gamma}\left(D_{3 \rho}\left(x_{j}\right)\right)}.
		\end{aligned}\end{equation}
Next, we distinguish between $p \ge 2$ or $1<p<2$.
        
		\textbf{Case \( p \in [2, \infty). \)}	
        Our starting point is inequality \eqref{eq:reg-functional-pge2}, which shows that the Besov seminorms of weak solutions can be controlled by the regularity modulus:
		\[
		|u|_{B_{p,\infty}^{s+\sigma/p}(D_\rho(x_j))}^p \lesssim \omega(u;\mathcal{J},T,\mathcal{C}_j,\sigma).
		\]
		This, together with \eqref{Regularidad: Cota de regularidad para F} and setting $\gamma = -s + \frac1{p'} \in (-s,0)$, yields
        \[ \begin{split}
		|u|_{B_{p, \infty}^{s+\sigma / p}\left(D_\rho\left(x_j \right)\right)}^p \lesssim &\|u\|_{\dot{B}^{\sigma}_{p,\infty}(\Omega)}\|u\|^{p-1}_{\widetilde{X}^{s,p}(\Omega)}+\|\grads u\|^{p}_{L^{p}(D_{3\rho}(x_0); \mathbb{R}^N)}  \\
        & +\|f\|_{B_{p^{\prime}, 1}^{-s + \frac1{p'}}\left(\Omega\right)}\|u\|_{B_{p, \infty}^{\sigma+s - \frac1{p'}}\left(D_{3 \rho}\left(x_{j}\right)\right)}.
		\end{split}\]
		
		Summing over \(j\) for \(j=1, \ldots, J,\) and using the localization property from Lemma \ref{lemma:loc-Besov}, we obtain 
        the following  inequality with a constant depending on the cardinality of the covering of $\Omega^\rho$:
		\begin{align}
			\label{Regularidad: Desigualdad general para el bootstraping}
			\|u\|_{\dot{B}_{p, \infty}^{s+\sigma / p}\left(\Omega\right)}^p \lesssim& \|u\|_{\dot{B}^{\sigma}_{p,\infty}(\Omega)}\|u\|^{p-1}_{\widetilde{X}^{s,p}(\Omega)}+\|\grads u\|^{p}_{L^{p}(\Omega)}\\&+\|f\|_{B_{p^{\prime}, 1}^{-s + \frac1{p'}}\left(\Omega\right)}\|u\|_{\dot{B}_{p, \infty}^{\sigma+s - \frac1{p'}}\left(\Omega\right)}
		\end{align}
		for all \(\sigma \in (0,1]\). To set up the bootstrap argument, it suffices to note that 
		\( s \geq \frac{1}{p'} \), hence the embedding  \(\dot{B}_{p,\sigma}^{\sigma+s - \frac1{p'}}(\Omega) \hookrightarrow \dot{B}_{p,\sigma}^{\sigma}(\Omega)\) holds, which yields
		\begin{align}
			\label{Regularidad: Desigualdad paso inductivo}
			\|u\|_{\dot{B}_{p, \infty}^{s+\sigma / p}\left(\Omega\right)}^p \lesssim & C_1 \|u\|_{\dot{B}^{\sigma+s-\frac{1}{p'}}_{p,\infty}(\Omega)}\|u\|^{p-1}_{\widetilde{X}^{s,p}(\Omega)}+C_2\|u\|^{p}_{\widetilde{X}^{s,p}(\Omega)} \\&+C_3\|f\|_{B_{p^{\prime}, 1}^{\frac{1}{p'}-s}\left(\Omega\right)}\|u\|_{\dot{B}_{p, \infty}^{\sigma+s-\frac{1}{p'}}\left(\Omega\right)},
		\end{align}		
		where the hidden constants depend on the covering (namely, on \(\Omega\) and \(J\)), and on \(N, s, p.\) Furthermore, observe that the differentiability index on the left-hand side is larger than that on the right-hand side. To exploit this fact, we define the sequence \(\{\sigma_k\}\) recursively as
	\begin{equation}
		\label{eq-rec-p>2-s>1p'}
			s+\frac{\sigma_{k}}{p}=\sigma_{i+1}+s-\frac{1}{p'} \Rightarrow \sigma_{k+1}:=\frac{\sigma_{k}}{p}+\frac{1}{p'},
	\end{equation}
		with initial value \(\sigma_{0}=0\); the latter is a consequence of setting the starting value \(\sigma_{0}+\gamma=s\) in \eqref{Regularidad: Desigualdad general para el bootstraping} with \(\gamma=s-\frac{1}{p'}\). Using an induction argument, we readily see that 
         \(\sigma_{k}=1- \left(\frac{1}{p}\right)^{k} \to 1\). 
        We claim that, for all $k\ge0$, \(u \in \dot{B}_{p, \infty}^{s+\sigma_{k} / p}(\Omega)\) and
		\begin{equation}
			\label{Regularidad: Afirmación por inducción}
			\|u\|_{\dot{B}_{p, \infty}^{s+\sigma_{k} / p}(\Omega)} \leq \Lambda_{k}\|f\|^{\frac{1}{p-1}}_{B_{p^{\prime}, 1}^{-s+\frac{1}{p'}}(\Omega)},
		\end{equation}
		for some uniformly bounded constants \(\Lambda_{k}\). We argue by induction. Note that the case \(k=0\) follows from the stability estimate \eqref{Regularidad: Cota de Estabilidad} and the embedding \(\widetilde{X}^{s,p}(\Omega) \subset \dot{B}_{p, \infty}^{s}(\Omega)\), giving
		\[
		\|u\|_{\dot{B}_{p, \infty}^{s}(\Omega)} \leq \Lambda_{0}\|f\|_{B_{p^{\prime}, 1}^{-s+\frac{1}{p^{\prime}}}(\Omega)}^{\frac{1}{p-1}}
		\]
		where \(\Lambda_{0}:=\Lambda_{0}(N, s, p, \Omega)\). We now set \(\sigma = \sigma_{k+1}\) in the inductive step inequality \eqref{Regularidad: Desigualdad paso inductivo}. Applying the inductive hypothesis yields the chain of inequalities:
		
		\[
		\begin{aligned}
			\|u\|_{\dot{B}_{p, \infty}^{s+\sigma_{k+1}/p}(\Omega)}^p 
			& \lesssim  C_1 \|u\|_{\dot{B}^{s+\frac{\sigma_k}{p}}_{p,\infty}(\Omega)}\|u\|^{p-1}_{\widetilde{X}^{s,p}(\Omega)}+C_2\|u\|^{p}_{\widetilde{X}^{s,p}(\Omega)} \\ & \quad + C_3\|f\|_{B_{p^{\prime}, 1}^{-s+\frac{1}{p'}}(\Omega)}\|u\|_{\dot{B}_{p, \infty}^{\frac{\sigma_k}{p}+s}(\Omega)} \\
			& \lesssim C_2 \|f\|^{p'}_{B_{p^{\prime}, 1}^{-s+\frac{1}{p'}}(\Omega)}+(C_1+C_3)  \Lambda_k \|f\|^{p'}_{B_{p^{\prime}, 1}^{-s+\frac{1}{p'}}(\Omega)}\\
			& \lesssim \left(C_2 + (C_1+C_3) \Lambda_k\right) \|f\|^{p'}_{B_{p^{\prime}, 1}^{-s+\frac{1}{p'}}(\Omega)}.
		\end{aligned}
		\]
		This establishes that the inductive claim \eqref{Regularidad: Afirmación por inducción} holds with \(\Lambda_{k+1} := (C_2 + (C_1+C_3) \Lambda_k)^{1/p}\). To prove uniform boundedness of this sequence, we define the majorant \(\Lambda := \max\{\Lambda_0, p'C_2+(C_1+C_3)^{p'} \}\). The base case \(\Lambda_0 \leq \Lambda\) holds by construction, and if \(\Lambda_k \leq \Lambda\), Young's inequality gives:	
		\[
		\begin{aligned}
			\Lambda_{k+1}^p &= C_2 + (C_1+C_3) \Lambda_k \leq  C_2 + \frac{(C_1+C_3)^{p'}}{p'}  +   \frac{\Lambda^{p}_{k}}{p} \\
			& \leq \frac{1}{p'}\left(p'C_2+(C_1+C_3)^{p'}\right) +\frac{\Lambda^{p}_k}{p} \leq \Lambda^p.
		\end{aligned}
		\]
		
	Taking the limit \(k \to \infty\), and noting that \(\sigma_k \to 1\), completes the proof for the case \(s \geq \frac{1}{p'}\).

		\textbf{Case \( p \in (1,2) \).} As a starting point for the subquadratic case, we rely on inequality \eqref{eq:reg-functional-p<2}, which yields the following local bound for Besov seminorms:
		
		\[
		\begin{aligned}
			|u|_{B_{p,\infty}^{s+\sigma/2}(D_\rho(x_0))}^{2} &\lesssim (\|u\|_{\widetilde{X}^{s,p}(\Omega)} + \|T_h u\|_{\widetilde{X}^{s,p}(\Omega)})^{2-p} \omega(u; \mathcal{J}, T, D, \sigma),
		\end{aligned}
		\]
	 invoking the stability estimate \eqref{Regularidad: Cota de Estabilidad}, we infer that
		\[
		|u|_{B_{p,\infty}^{s+\sigma/2}(D_\rho(x_0))}^{2} \lesssim \|f\|_{X^{-s,p'}(\Omega)}^{\frac{2-p}{p-1}} \omega(u; \mathcal{J}, T, D, \sigma).
		\]
		
		Next, using the inequality \eqref{Regularidad: Cota de regularidad para F}, setting \(\gamma=s-\frac{1}{2}\), and summing over \(j\) for \(j=1, \ldots, J\), and using the localization property of Besov norms, we obtain the inequality
		\begin{align*}
			|u|_{B_{p,\infty}^{s+\sigma/2}(\Omega)}^{2} 
			&\lesssim \|f\|_{X^{-s,p'}(\Omega)}^{\frac{2-p}{p-1}} \Bigg(
			C_1 \|u\|_{\dot{B}^{\sigma+s-\frac{1}{2}}_{p,\infty}(\Omega)} 
			\|u\|^{p-1}_{\widetilde{X}^{s,p}(\Omega)} \\
			&\hspace{4em} + C_2 \|u\|^{p}_{\widetilde{X}^{s,p}(\Omega)} 
			+ C_3 \|f\|_{B_{p^{\prime},1}^{-s+\frac{1}{2}}(\Omega)} 
			\|u\|_{\dot{B}_{p,\infty}^{\sigma+s-\frac{1}{2}}(\Omega)} 
			\Bigg),
		\end{align*}
     for every \(\sigma \in (0,1)\) and \(s \geq \tfrac{1}{2}\); note that this assumption is required to ensure the embedding \(\dot{B}_{p,\infty}^{\sigma+s - \frac{1}{2}}(\Omega) \hookrightarrow \dot{B}_{p,\infty}^{\sigma}(\Omega).\)
		
		Similarly to the previous case, we now proceed to exploit the improvement of the differentiability index from the right-hand side to the left one. We set \(\{\sigma_k\}\) recursively by
		\[
		s+\frac{\sigma_{k}}{2}=\sigma_{k+1}+s-\frac{1}{2} \quad \Rightarrow \quad \sigma_{k+1}=\frac{\sigma_{k}}{2}+\frac{1}{2},
		\]
		with initial value \(\sigma_{0}=0\). This yields \( \sigma_{k}=1-\frac{1}{2^{k}} \), which satisfies the conditions \(  \sigma_{k}+\gamma=\sigma_{k}+s-\frac{1}{2} \geq s>0 \) for all \( i \geq 1 \), and \( \sigma_{k} \rightarrow 1\). We prove by induction that
		
		\begin{equation}
			\label{Regularidad: Teorema Regularidad caso p entre 1 y 2}
			\|u\|_{\dot{B}_{p, \infty}^{s+\frac{\sigma_{k}}{2}}(\Omega)} \leq \Lambda_{k}\|f\|_{X^{-s,p^{\prime}}(\Omega)}^{\frac{2-p}{p-1}+2^{-k}}\|f\|_{B_{p^{\prime}, 1}^{-s+\frac{1}{2}}(\Omega)}^{1-2^{-k}} \quad \forall k \ge 0,
		\end{equation}
		for some uniformly bounded constants \(\Lambda_{k}\).  Note that the case \(k=0,\) follows by \eqref{Regularidad: Cota de Estabilidad} and the continuity of the embedding \( \widetilde{X}^{s,p} (\Omega) \subset \dot{B}^{s}_{p,\infty} (\Omega) \)	 we have:
		
		\begin{equation*}
			\|u\|_{\dot{B}_{p, \infty}^{s}(\Omega)} \leq \Lambda_{0}\|f\|_{{X}^{-s,p^{\prime}}(\Omega)}^{\frac{2-p}{p-1}}.
		\end{equation*}
		We now set \(\sigma = \sigma_{k+1}\), then we have
		\begin{align*}
			|u|_{B_{p,\infty}^{s+\frac{\sigma_{k+1}}{2}}(D_\rho(x_0))}^{2} &\lesssim \|f\|_{X^{-s,p'}(\Omega)}^{\frac{2-p}{p-1}} \Bigg( C_2\|f\|^{p'}_{\widetilde{X}^{-s,p'}(\Omega)}  \\ & 
             \hspace{10em} +(C_3+C_1)\|f\|_{B_{p^{\prime}, 1}^{-s+\frac{1}{2}}\left(\Omega\right)}\|u\|_{\dot{B}_{p, \infty}^{\sigma+s-\frac{1}{2}}\left(\Omega\right)}\Bigg)\\
			& \leq \|f\|_{X^{-s,p'}(\Omega)}^{\frac{2(2-p)}{p-1}+2^{-k}} \left(C_2\|f\|^{2-2^{-k}}_{\widetilde{X}^{-s,p'}(\Omega)}+(C_{1}+C_3) \Lambda_{k}\|f\|_{B_{p^{\prime}, 1}^{-s+\frac{1}{2}}(\Omega)}^{2-2^{-k}}\right)\\
			&\leq \|f\|_{X^{-s,p'}(\Omega)}^{\frac{2(2-p)}{p-1}+2^{-k}}\|f\|_{B_{p^{\prime}, 1}^{-s+\frac{1}{2}}(\Omega)}^{2-2^{-k}} \left(C_2 C_4^{2-2^{-k}}+(C_1+C_3)\Lambda_{k}\right),
		\end{align*}
		
	\noindent where \( C_{4} := C_{4}(\Omega, N, s, p, \beta) \) denotes the constant associated with the continuous embedding \( X^{-s,p'}(\Omega) \subset B_{p^{\prime}, 1}^{-s+\frac{1}{2}}(\Omega) \). Since \( 1 \leq 2 - 2^{-k} \leq 2 \), it follows that
		\[
		C_{4}^{2 - 2^{-k}} \leq \max\{C_{4}, C_{4}^2\} =: C_{5},
		\]
	which leads to the recurrence relation
		\[
		\Lambda_{k+1} := \left(C_{2} C_{5} + (C_1 + C_3) \Lambda_k \right)^{1/2},
		\]
	appearing in \eqref{Regularidad: Teorema Regularidad caso p entre 1 y 2}. It remains to prove that \( \Lambda_k \leq \Lambda \) for some constant \( \Lambda > 0 \) and for all \( k \geq 0 \). Define
		\[
		\Lambda := \max\left\{ \Lambda_0, \left(2 C_2 C_5 + (C_1 + C_3)^2 \right)^{1/2} \right\},
		\]
	so that the same inductive argument used in the case \( p \geq 2 \) applies in this context as well. Finally, the estimate \eqref{Regularidad: Teorema Regularidad caso p entre 1 y 2} follows by taking the limit \( k \to \infty \). 
	\end{proof}
We now turn to the complementary regime in the parameter $s$ covered by the previous theorem. More precisely, for $p>2$ we consider  \(s \in \left(0,\tfrac{1}{p'}\right),\) while for $1<p<2$ we take \(s \in \left(0,\tfrac12\right).\) Although the resulting regularity exponents are not optimal, it is worth emphasizing that the regularity obtained is monotone with respect to $s$ and coincides with the critical threshold, namely $s=\tfrac{1}{p'}$ for $p>2$ and $s=\tfrac{1}{2}$ for $1<p<2$.

	\begin{theorem}
	\label{teo-reg-subopt}
	Let \(\Omega\) be a bounded Lipschitz domain, \( \sigma \in (0,1)\), \(f \in B_{p^{\prime},1}^{0}(\Omega),\)  and \(u \in \widetilde{X}^{s,p}(\Omega)\) be a weak solution to \eqref{Regularidad:Problema Dirichlet}. If \( p \geq 2, \, s \in (0,\frac{1}{p'})\),  then \(u \in \dot{B}_{p, \infty}^{s+\frac{s}{p-1}}(\Omega)\) and
		\[
		\|u\|_{\dot{B}_{p, \infty}^{\,s + \frac{s}{p-1}}(\Omega)}^p 
		\lesssim 
		\|f\|_{B^{0}_{p',1}(\Omega)}^{p'}.
		\] 
				
In contrast, if \( 1 < p < 2, \, s \in (0,\frac{1}{2})\) then  \(u \in \dot{B}_{p, \infty}^{2s}(\Omega)\) and
		\[
		\|u\|_{\dot{B}_{p, \infty}^{\,2s }(\Omega)}^p 
		\lesssim 
		\|f\|_{B^{0}_{p',1}(\Omega)}^{p'}.
		\]
		
		Above, all hidden constants depend on \( N,s,p, \) and \( \Omega. \)
	\end{theorem}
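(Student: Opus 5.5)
We follow the scheme of the proof of Theorem~\ref{teo-reg-caso-opt}: combine the functional-regularity estimates with a bootstrap in the Besov scale. The difference is in the choice of $\gamma$. In the present regime the embedding $\dot B^{\sigma+s-\tau}_{p,\infty}(\Omega)\hookrightarrow \dot B^{\sigma}_{p,\infty}(\Omega)$ with $\tau=\max\{1/p',1/2\}$ fails, because $s<\tau$. We therefore take $\gamma=0$, i.e.\ $f\in B^0_{p',1}(\Omega)$, in Proposition~\ref{prop:reg-J1}. With this choice the $\mathcal J_1$ term is controlled by $\|f\|_{B^0_{p',1}(\Omega)}\|u\|_{\dot B^{\sigma}_{p,\infty}(\Omega)}$. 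This is the same Besov index that already appears in the $\mathcal J_2$ term of Proposition~\ref{Regularidad: Regularidad F_2 Proposition}. We also note that $B^0_{p',1}(\Omega)\hookrightarrow X^{-s,p'}(\Omega)$, so \eqref{Regularidad: Cota de Estabilidad} still gives $\|u\|_{\widetilde X^{s,p}(\Omega)}\lesssim\|f\|_{B^0_{p',1}(\Omega)}^{1/(p-1)}$.

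\textbf{Case $p\ge2$.} Insert these bounds into \eqref{eq:reg-functional-pge2} and sum over the covering $\{D_\rho(x_j)\}$ using Lemma~\ref{lemma:loc-Besov}. This yields, for $\sigma\in(0,1]$,
\[
\|u\|^p_{\dot B^{s+\sigma/p}_{p,\infty}(\Omega)}\lesssim \|u\|_{\dot B^{\sigma}_{p,\infty}(\Omega)}\Bigl(\|u\|^{p-1}_{\widetilde X^{s,p}(\Omega)}+\|f\|_{B^0_{p',1}(\Omega)}\Bigr)+\|u\|^p_{\widetilde X^{s,p}(\Omega)}.
\]
The right-hand side is finite as long as $u\in\dot B^\sigma_{p,\infty}(\Omega)$. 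We bootstrap by setting $\sigma_{k+1}=s+\sigma_k/p$ with $\sigma_0=s$; the base case comes from $\widetilde X^{s,p}(\Omega)\subset\dot B^s_{p,\infty}(\Omega)$. The sequence $\sigma_k$ increases to the fixed point $\sigma_*=s+\sigma_*/p$, i.e.\ $\sigma_*=sp'$. Since $s<1/p'$, we have $\sigma_*<1$, so every $\sigma_k$ stays in the admissible range $(0,1]$. The limiting regularity index is $s+\sigma_*/p=s+s/(p-1)$, which is the claimed exponent.

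The inductive bound $\|u\|_{\dot B^{s+\sigma_k/p}_{p,\infty}}\le\Lambda_k\|f\|^{1/(p-1)}_{B^0_{p',1}}$ is obtained as in Theorem~\ref{teo-reg-caso-opt}. Each term on the right-hand side scales like $\|f\|^{p'}$, and the recursion $\Lambda_{k+1}^p=C_2+(C_1+C_3)\Lambda_k$ is uniformly bounded by the same Young-inequality argument.

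\textbf{Case $1<p<2$.} We use \eqref{eq:reg-functional-p<2} instead, which gives the left-hand index $s+\sigma/2$. The recursion becomes $\sigma_{k+1}=s+\sigma_k/2$, with fixed point $\sigma_*=2s<1$ because $s<1/2$. The limiting regularity is therefore $s+\sigma_*/2=2s$. The powers of $\|f\|$ are tracked as in \eqref{Regularidad: Teorema Regularidad caso p entre 1 y 2}: the prefactor $(\|u\|+\|T_hu\|)^{2-p}$ is bounded by the stability estimate, and the resulting powers are collected into the form stated in the theorem.

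\textbf{Main obstacle: passing to the limit.} The uniform bounds $\Lambda_k\le\Lambda$ only give $u\in\dot B^{s+\sigma_k/p}_{p,\infty}(\Omega)$ with index strictly below the endpoint. To reach the endpoint, we note that the basic inequality holds for every $\sigma\le1$, not just along the sequence. We then apply it directly with $\sigma=\sigma_*$, bounding $\|u\|_{\dot B^{\sigma_*}_{p,\infty}}$ by $\|u\|_{\dot B^{s+\sigma_*/p}_{p,\infty}}$ (the two indices coincide since $s+\sigma_*/p=\sigma_*$). This gives $X^p\lesssim AX+B$ with $X=\|u\|_{\dot B^{\sigma_*}_{p,\infty}}$, so Young's inequality yields $X^p\lesssim A^{p'}+B$ provided $X<\infty$ a priori. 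Finiteness is the delicate point. We would try to obtain it from the uniform bounds $\Lambda_k\le\Lambda$ via the monotone limit of the difference-quotient seminorms \eqref{eq: Regularidad-Norma-Besov-Inf}: for each fixed $h$ the quotient $\|u_h-2u+u_{-h}\|/|h|^{\sigma_k}$ converges to the quotient at exponent $\sigma_*$, and the constants are uniform in $k$. This passage to the limit is the step we expect to need the most care.
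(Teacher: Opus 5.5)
Your proposal is correct and follows the paper's proof. Like the paper, you take $\gamma=0$ in the estimate for the linear term, combine it with the estimate for the $p$-energy term and the stability bound, and bootstrap along $\sigma_{k+1}=s+\sigma_k/p$ (resp.\ $\sigma_{k+1}=s+\sigma_k/2$) toward the fixed point $sp'<1$ (resp.\ $2s<1$), keeping $\Lambda_k$ uniformly bounded by Young's inequality; your starting value $\sigma_0=s$ is just the paper's $\sigma_0=0$ shifted by one index.

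The only addition is your treatment of the limit $k\to\infty$, which the paper simply asserts. Your monotone-convergence argument on the second-difference quotients is sound, since $|h|\le\rho\le 1$ and the norm-equivalence constants are uniform over the compact range of indices involved. It already yields the endpoint bound directly, so the subsequent absorption at $\sigma=\sigma_*$ is optional.
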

	\begin{proof}
We follow the strategy of Theorem~\ref{teo-reg-caso-opt}, emphasizing only the modifications required in the present setting. 
Starting from inequality~\eqref{Regularidad: Cota de regularidad para F} and taking $\gamma = 0$ in Proposition~\ref{prop:reg-J1}, we obtain

\begin{equation}\label{eq-reg-F-sub-opt}
	\begin{aligned}
		\omega\left(u ; \mathcal{J}, T, \mathcal{C}_{j}, \sigma\right)
		\lesssim\;
		&\|u\|_{\dot{B}^{\sigma}_{p,\infty}(\Omega)}
		\|u\|^{p-1}_{\widetilde{X}^{s,p}(\Omega)}
		+\|\grads u\|^{p}_{L^{p}(D_{3\rho}(x_0); \mathbb{R}^N)} \\
		&\quad
		+\|f\|_{B_{p^{\prime}, 1}^{0}\left(\Omega\right)}
		\|u\|_{B_{p, \infty}^{\sigma}\left(D_{3 \rho}\left(x_{j}\right)\right)}.
	\end{aligned}
\end{equation}


\medskip
\noindent\textbf{Case $p \in (2,\infty)$.}
As in the previous theorem, from \eqref{eq:reg-functional-pge2} we know that
\[
|u|_{B_{p,\infty}^{s+\sigma/p}(D_\rho(x_j))}^p
\lesssim
\omega(u;\mathcal{J},T,\mathcal{C}_j,\sigma).
\]

Combining this estimate with \eqref{eq-reg-F-sub-opt}, summing over $j=1,\ldots,J$, and using the localization property of Besov norms stated in Lemma~\ref{lemma:loc-Besov}, we obtain the following inequality, with a constant depending on the cardinality of the covering of $\Omega^\rho$:
\begin{equation}
	\label{eq-bootstraping-subopt-p>2}
	\|u\|_{\dot{B}_{p, \infty}^{s+\sigma / p}\left(\Omega\right)}^p
	\lesssim
	\|u\|_{\dot{B}^{\sigma}_{p,\infty}(\Omega)}
	\|u\|^{p-1}_{\widetilde{X}^{s,p}(\Omega)}
	+\|\grads u\|^{p}_{L^{p}(\Omega)}
	+\|f\|_{B_{p^{\prime}, 1}^{0}\left(\Omega\right)}
	\|u\|_{\dot{B}_{p, \infty}^{\sigma}\left(\Omega\right)},
\end{equation}
for all $\sigma \in (0,1)$. 
We define the sequence $\{\sigma_k\}$ recursively by
$
	\label{eq-rec-p>2-s<1p'}
	s + \frac{\sigma_k}{p} = \sigma_{k+1},
	\ 
	\sigma_0 = 0. $
The sequence $\{\sigma_k\}$ is increasing and converges to $s p'$ as $k \to \infty$, since \(\sigma_k = s p'\bigl(1 - p^{-k}\bigr).\) Therefore, proceeding from inequality~\eqref{Regularidad: Afirmación por inducción} exactly as in the previous theorem, we conclude that

\[
\|u\|_{\dot{B}_{p, \infty}^{\,s + \frac{s}{p-1}}(\Omega)}^p 
\lesssim 
\|f\|_{B^{0}_{p',1}(\Omega)}^{p'}.
\]

\textbf{Case \(1 < p < 2\).}
Recall that in the subquadratic regime we rely on inequality~\eqref{eq:reg-functional-p<2}, namely
\[
\begin{aligned}
	|u|_{B_{p,\infty}^{s+\sigma/2}(D_\rho(x_0))}^{2} 
	\lesssim 
	\bigl(\|u\|_{\widetilde{X}^{s,p}(\Omega)} + \|T_h u\|_{\widetilde{X}^{s,p}(\Omega)}\bigr)^{2-p}
	\,\omega(u; \mathcal{J}, T, D, \sigma).
\end{aligned}
\]
Arguing as in the previous theorem, and combining this estimate with the stability bound~\eqref{Regularidad: Cota de Estabilidad} and inequality~\eqref{eq-reg-F-sub-opt}, we sum over \(j = 1, \ldots, J\) and invoke the localization property of Besov norms to obtain the following global estimate:
\begin{align*}
	|u|_{B_{p,\infty}^{\,s+\sigma/2}(\Omega)}^{2} 
	&\lesssim 
	\|f\|_{X^{-s,p'}(\Omega)}^{\frac{2-p}{p-1}} 
	\Bigg(
	C_1 \|u\|_{\dot{B}^{\sigma}_{p,\infty}(\Omega)} 
	\|u\|^{p-1}_{\widetilde{X}^{s,p}(\Omega)}  \\
	&\hspace{8em}
	+ C_2 \|u\|^{p}_{\widetilde{X}^{s,p}(\Omega)} 
	+ C_3 \|f\|_{B_{p^{\prime},1}^{0}(\Omega)} 
	\|u\|_{\dot{B}_{p,\infty}^{\sigma}(\Omega)} 
	\Bigg),
\end{align*}
for every \(\sigma \in (0,1)\). 
We now set \(\{\sigma_k\}\) by
$	s + \frac{\sigma_k}{2} = \sigma_{k+1},
 \	\sigma_0 = 0, $
namely
\(
\sigma_k = 2s \left( 1 - \left(\tfrac{1}{2}\right)^k \right).
\)
The same inductive bootstrap argument developed in~\eqref{Regularidad: Teorema Regularidad caso p entre 1 y 2} applies without modification, yielding
\[
\|u\|_{\dot{B}_{p, \infty}^{\,2s }(\Omega)}^p 
\lesssim 
\|f\|_{B^{0}_{p',1}(\Omega)}^{p'}.
\]

	\end{proof}

	We observe that, as a direct consequence of Proposition~\ref{teo-reg-caso-opt}, by setting \(p = 2\) we recover the Besov regularity result obtained  in~\cite{borthagaray2023besov}. 
	This is to be expected, since in the linear case \(p=2\) the operator under consideration reduces to the fractional Laplacian, and our regularity argument relies on the same class of localized translations.

\section{Complementary results} \label{sec:complementary}
Here, we comment on a couple of variants and extensions of our main results.

\subsection{Problems with a variable diffusivity}
In first place, the regularity estimates from Theorems \ref{teo-reg-caso-opt} and \ref{teo-reg-subopt} can be extended to problems with a variable diffusivity coefficient of the form:
		\begin{equation}
	\label{Regularidad:Problema Dirichlet difusividad variable}
			\left\{
			\begin{aligned}
				-\operatorname{div}_{s}\left( A(x)|\nabla^{s} u|^{p-2} \nabla^{s} u\right) &= f\quad\text{in } \; \Omega, \\
				u &= 0 \quad\text{ in } \; \Omega^c,
			\end{aligned}
			\right.
		\end{equation}
where the function \( A: \mathbb{R}^N \rightarrow \mathbb{R} \) is assumed to be Lipschitz continuous.
		
Indeed, the proof follows the same structure, with the main difference being that the constants in the key estimates now depend on properties of \(A\). The argument from Proposition \ref{Regularidad: Regularidad F_2 Proposition} to establish inequality \eqref{Regularidad: Regularidad de F_2 cota auxiliar savare 1} can be replicated, noting that the resulting constant will depend on the \(L^\infty\)-norm of \(A\).
		
	On the other hand, the derivation of inequality \eqref{Regularidad: Regularidad de F_2 cota auxiliar savare 2} requires a slight modification. Using the convexity of \( F(x) = |x|^p \), we have:
		\begin{align*}
			\int_{\mathbb{R}^N} \left( |T_h u(x)|^{p} - |u(x)|^{p} \right) \, dx  
			&\leq \int_{\mathbb{R}^N}
            A(x) \left( |\nabla^s (T_h u)(x)|^p - |\nabla^s u(x)|^p \right) dx \\
			&\leq \int_{\mathbb{R}^N}\left[ A(x) \phi(x) | \nabla^s v_h |^p - A(x) \phi(x)|\nabla^s v|^p \right] dx \\
			&\leq \int_{\mathbb{R}^N}
            \left( A_{-h}(x) \phi_{-h}(x) - A(x) \phi(x) \right) | \nabla^s v_h |^p \, dx.
		\end{align*}
		This last term can be bounded using the Lipschitz continuity of \(A\) and \(\phi\), yielding an estimate analogous to \eqref{Regularidad: Regularidad de F_2 cota auxiliar savare 2}:
		\begin{equation}
			\int_{\mathbb{R}^N} \left( |T_h u(x)|^p - |u(x)|^p \right) \, dx \leq C |h| \|u\|_{L^{p}(D_{3\rho}(x_0))}^p,
		\end{equation}
		where the constant \(C\) now depends on the Lipschitz constant of \(A\). 
		
		Having re-established these two key inequalities, the argument from the proof of Theorems \ref{teo-reg-caso-opt} and \ref{teo-reg-subopt} can be directly applied to obtain a fully analogous result for problem \eqref{Regularidad:Problema Dirichlet difusividad variable}. Clearly, analogous estimates can be derived in case $A$ is only H\"older continuous, but will give rise to a lower regularity pickup.
		
The result obtained for problem~\eqref{Regularidad:Problema Dirichlet difusividad variable} is new even in the case \(p=2\), since the resulting operator does not coincide with the variable-diffusivity fractional Laplacian from~\cite{borthagaray2023besov}. Instead, it gives rise to a genuinely new class of nonlocal operators as discussed in \cite{shieh2015new,garcia2025fractional}. For this class, it is natural and potentially fruitful to formulate interface problems in a manner closely analogous to the classical fractional Laplacian, opening the door to a parallel interface theory within this nonlocal framework.

\subsection{Intermediate regularity}
Next, we comment on the derivation of regularity estimates under weaker data assumptions with respect to Theorems \ref{teo-reg-caso-opt} and \ref{teo-reg-subopt}.
We rely on the following nonlinear interpolation estimate, which can be found in Tartar~\cite[Theorem~I.1]{TARTAR1972469}.

	\begin{proposition}
\label{prop-nonlinearinterplation} Let $A_0 \subset A_1, B_0 \subset B_1$ be Banach spaces, $U \subset A_1$ a nonempty open set and $T: U \rightarrow B_1$ be a a function that maps $A_0 \cap U$ into $B_0$. Let us assume that there exist constants $c_0, c_1$ such that
	
	$$
	\begin{aligned}
		& \|T f\|_{B_0} \leq c_0\|f\|_{A_0}^{\alpha_0}, \quad \forall f \in A_0 \cap U, \\
		& \|T f-T g\|_{B_1} \leq c_1\|f-g\|_{A_1}^{\alpha_1}, \quad \forall f, g \in U,
	\end{aligned}
	$$
	
	\noindent for some $\alpha_0>0, \alpha_1 \in(0,1]$. Then, if $\theta \in(0,1)$ and $q \in[1, \infty], T \operatorname{maps}\left(A_0, A_1\right)_{\theta, q}$ into $\left(B_0, B_1\right)_{\eta, r}$, where $\frac{1-\eta}{\eta}=\frac{\alpha_1}{\alpha_0} \frac{1-\theta}{\theta}$ and $r=\max \left\{1, \frac{q}{(1-\eta) \alpha_0+\eta \alpha_1}\right\}$.
	\end{proposition}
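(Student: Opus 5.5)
The plan is to argue directly with Peetre's $K$-functional, following Tartar's scheme. We estimate $K(\tau, Tf; B_0,B_1)$ for suitable $\tau$ in terms of $K(t,f;A_0,A_1)$, then reparametrize. Fix $f\in (A_0,A_1)_{\theta,q}\cap U$, write $K(t):=K(t,f;A_0,A_1)$ and $g(t):=t^{-\theta}K(t)$, so that $g\in L^q(dt/t)$. Recall that $K$ is nondecreasing and $K(t)/t$ is nonincreasing.

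\textbf{Step 1: splitting.} For each $t>0$ pick a near-optimal decomposition $f=f_0(t)+f_1(t)$ with $\|f_0\|_{A_0}+t\|f_1\|_{A_1}\le 2K(t)$. Since $\theta<1$, we have $K(t)/t\to 0$ as $t\to\infty$, so $\|f_1\|_{A_1}\le 2K(t)/t$ is small for $t\ge t_*$. Because $U$ is open in $A_1$, this gives $f_0=f-f_1\in U\cap A_0$ for $t\ge t_*$. For such $t$ and any $\tau>0$,
\[
K(\tau,Tf;B_0,B_1)\le \|Tf_0\|_{B_0}+\tau\|Tf-Tf_0\|_{B_1}\le c_0(2K(t))^{\alpha_0}+c_1\tau\,(2K(t)/t)^{\alpha_1}.
\]
For small $\tau$, corresponding to $t<t_*$, we only use the trivial bound $K(\tau,Tf)\le \tau\|Tf\|_{B_1}$. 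Its contribution to the $(\eta,r)$-norm is finite because $\eta<1$.

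\textbf{Step 2: balancing and exponents.} Choose $\tau(t):=t^{\alpha_1}K(t)^{\alpha_0-\alpha_1}$, so that both terms are $\lesssim K(t)^{\alpha_0}$. Substituting $K=t^\theta g$, one checks that
\[
\tau^{-\eta}K(\tau,Tf)\lesssim t^{\theta\alpha_0-\eta(\alpha_1+\theta(\alpha_0-\alpha_1))}\,g^{\alpha_0-\eta(\alpha_0-\alpha_1)} .
\]
The power of $t$ vanishes exactly when $\theta\alpha_0=\eta\big(\alpha_1(1-\theta)+\theta\alpha_0\big)$, which is precisely the relation $\frac{1-\eta}{\eta}=\frac{\alpha_1}{\alpha_0}\frac{1-\theta}{\theta}$. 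Hence
\[
\tau^{-\eta}K(\tau,Tf)\lesssim g(t)^{\beta},\qquad \beta:=(1-\eta)\alpha_0+\eta\alpha_1 .
\]
This yields $g^\beta\in L^{q/\beta}(dt/t)$, and therefore the target index $r=\max\{1,q/\beta\}$. The lower bound $r\ge1$ is needed only so that the target is a Banach interpolation space; the embedding from $\ell^{q/\beta}$ into $\ell^r$ handles it.

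\textbf{Step 3 (main obstacle): change of variables.} The map $t\mapsto\tau(t)$ need not be monotone or smooth, so one cannot substitute $d\tau/\tau$ for $dt/t$ directly. I would discretize. Take dyadic $t_k=2^k$ and $\tau_k=\tau(t_k)$. Using monotonicity of $K$ and of $K(t)/t$, one gets $K(2t)\simeq K(t)$. Then, since $\alpha_1\in(0,1]$ and $\alpha_0>0$, one obtains $\tau_{k+1}\gtrsim \tau_k\,2^{\min\{\alpha_1,\,\alpha_0\}}\cdot c$, so that the $\tau_k$ grow at least geometrically (after possibly passing to a subsequence with a fixed step). Next, $K(\tau,Tf)$ is monotone in $\tau$ and $K(\tau,Tf)/\tau$ is nonincreasing. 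These two facts let us control $\sup_{\tau\in[\tau_k,\tau_{k+1}]}\tau^{-\eta}K(\tau,Tf)$ by the values at the endpoints, up to a factor depending on the ratio $\tau_{k+1}/\tau_k$. That ratio is itself bounded by a power of $K(t_{k+1})/K(t_k)\lesssim1$. Summing over $k$, the $\ell^{r}$-sum of $\tau_k^{-\eta}K(\tau_k,Tf)$ is dominated by $\|g^\beta\|_{\ell^{q/\beta}}\simeq\|f\|^{\beta}_{(A_0,A_1)_{\theta,q}}$, which gives $Tf\in(B_0,B_1)_{\eta,r}$. I expect verifying this two-sided comparability of consecutive $\tau_k$ to be the delicate point. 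If it fails, the fallback is to choose $\tau$ via $\tau=t^{\alpha_1}\tilde K(t)^{\alpha_0-\alpha_1}$ with a regularized (log-concave) majorant $\tilde K\simeq K$, which makes $\tau$ strictly monotone with $d\log\tau\simeq d\log t$.
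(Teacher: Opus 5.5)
The paper does not prove this proposition. It is quoted from Tartar [Theorem I.1] and used as a black box, so there is no internal argument to compare with. Your proposal is a correct reconstruction of the standard $K$-functional proof of Tartar's result. The following steps all check out:
the splitting with $f_0(t)\in A_0\cap U$ for $t\ge t_*$ (from $K(t)/t\to 0$ and openness of $U$ in $A_1$);
the balancing choice $\tau(t)=t^{\alpha_1}K(t)^{\alpha_0-\alpha_1}$;
the exponent count giving $\tau^{-\eta}K(\tau,Tf)\lesssim g(t)^{\beta}$ with $\beta=(1-\eta)\alpha_0+\eta\alpha_1$;
the trivial bound $K(\tau,Tf)\le\tau\|Tf\|_{B_1}$ for small $\tau$.
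You are also right to read the conclusion for $f\in(A_0,A_1)_{\theta,q}\cap U$. The result is only qualitative because $t_*$ depends on $f$.

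The one thing to correct is your assessment of Step 3. The map $t\mapsto\tau(t)$ \emph{is} always monotone, so the obstacle disappears. For $0<t<t'$, the two facts you already recorded give $1\le K(t')/K(t)\le t'/t$. Write $\tau(t)=t^{\alpha_1}K(t)^{\alpha_0-\alpha_1}$ if $\alpha_0\ge\alpha_1$, and $\tau(t)=t^{\alpha_0}\bigl(t/K(t)\bigr)^{\alpha_1-\alpha_0}$ if $\alpha_0<\alpha_1$. In both cases $(t'/t)^{\min\{\alpha_0,\alpha_1\}}\le\tau(t')/\tau(t)\le(t'/t)^{\max\{\alpha_0,\alpha_1\}}$.

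So $\log\tau$ is an increasing bi-Lipschitz function of $\log t$, mapping $[t_*,\infty)$ onto $[\tau(t_*),\infty)$. The substitution $\int\Phi(\tau)\,d\tau/\tau\le\max\{\alpha_0,\alpha_1\}\int\Phi(\tau(t))\,dt/t$ for nonnegative $\Phi$ is therefore legitimate as it stands. You need no dyadic discretization, subsequence, or regularized majorant.

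Likewise, the passage to $r=\max\{1,q/\beta\}$ needs no sequence-space embedding. Since $(A_0,A_1)_{\theta,q}\subset(A_0,A_1)_{\theta,\infty}$, $g$ is bounded, and $g^{\beta r}\le\|g\|_{L^\infty}^{\beta r-q}g^{q}$ because $\beta r\ge q$. Your discrete version is not wrong, just longer than necessary. In fact $\tau_{k+1}/\tau_k\in[2^{\min\{\alpha_0,\alpha_1\}},2^{\max\{\alpha_0,\alpha_1\}}]$ exactly, with no constant.
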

	
	Using the continuous embedding
	\[
	\widetilde{X}^{s,p}(\Omega) \hookrightarrow \dot{B}_{p,\infty}^{s}(\Omega),
	\]
	and combining Theorems~\ref{teo-reg-caso-opt} and~\ref{teo-reg-subopt} with 
    Proposition~\ref{prop-nonlinearinterplation}, we obtain the following regularity result for the solution operator via real interpolation; see \cite[Theorem 2]{savare1998regularity}, \cite[Corollary 3.2]{borthagaray2024quasi}.
	
	\begin{corollary}
		Let $\Omega$ be a bounded Lipschitz domain, let $p \in (1,\infty)$, and let $\theta \in (0,1)$.  
		Then, the solution operator $f \mapsto u$ satisfies the following mapping properties.
		
		\medskip
		
		\noindent\textbf{Case $p \geq 2$.}
		\begin{align*}
			&\text{If } s \in \Bigl[\frac{1}{p'},1\Bigr)
			\quad \text{and} \quad
			f \in W^{-s+\frac{\theta}{p'},\,p'}(\Omega),
			\quad \text{then} \quad
			u \in \widetilde{W}^{s+\frac{\theta}{p},\,p}(\Omega), \\
			&\text{If } s \in \Bigl(0,\frac{1}{p'}\Bigr)
			\quad \text{and} \quad
			f \in W^{-\theta s,\,p'}(\Omega),
			\quad \text{then} \quad
			u \in \widetilde{W}^{s+\theta\frac{s}{p-1},\,p}(\Omega).
		\end{align*}
		
		\medskip
		
		\noindent\textbf{Case $1<p<2$.}
		\begin{align*}
			&\text{If } s \in \Bigl[\frac{1}{2},1\Bigr)
			\quad \text{and} \quad
			f \in W^{-s+\frac{\theta}{2},\,p'}(\Omega),
			\quad \text{then} \quad
			u \in \widetilde{W}^{s+\frac{\theta}{2},\,p}(\Omega), \\
			&\text{If } s \in \Bigl(0,\frac{1}{2}\Bigr)
			\quad \text{and} \quad
			f \in W^{-\theta s,\,p'}(\Omega),
			\quad \text{then} \quad
			u \in \widetilde{W}^{s+\theta s,\,p}(\Omega).
		\end{align*}
	\end{corollary}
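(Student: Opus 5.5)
The plan is to apply Tartar's nonlinear interpolation result, Proposition~\ref{prop-nonlinearinterplation}, to the solution operator $T\colon f\mapsto u$. The two endpoint estimates come from the stability bound \eqref{Regularidad: Cota de Estabilidad} and from Theorems~\ref{teo-reg-caso-opt} and~\ref{teo-reg-subopt}. Fix one of the four regimes, say $p\ge2$ and $s\in[\frac1{p'},1)$, and choose the spaces as follows:
\[
A_0=B^{-s+\frac1{p'}}_{p',1}(\Omega),\quad A_1=X^{-s,p'}(\Omega),\quad B_0=\dot B^{s+\frac1p}_{p,\infty}(\Omega),\quad B_1=\widetilde X^{s,p}(\Omega).
\]
Take $U=A_1$. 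Theorem~\ref{teo-reg-caso-opt} gives the first hypothesis with exponent $\alpha_0=\frac1{p-1}$. In the other regimes the choices change accordingly: for $1<p<2$ the left-hand side carries the extra factor $\|f\|_{X^{-s,p'}}^{(2-p)/(p-1)}$, which we restrict to bounded sets of $A_1$. For the small-$s$ cases, Theorem~\ref{teo-reg-subopt} is used with $A_0=B^0_{p',1}(\Omega)$.

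The second hypothesis is the Hölder continuity of the solution map from $X^{-s,p'}$ into $\widetilde X^{s,p}$. This follows from the strong monotonicity recalled in Remark~\ref{rem: coercividad}. Test the difference of the weak formulations \eqref{eq:weak-formulation} for $u_1,u_2$ with $v=u_1-u_2$.
\begin{itemize}
\item For $p\ge2$, the inequality $(|a|^{p-2}a-|b|^{p-2}b)\cdot(a-b)\gtrsim|a-b|^p$ gives $\|u_1-u_2\|^{p-1}\lesssim\|f_1-f_2\|$, so $\alpha_1=\frac1{p-1}$.
\item For $1<p<2$, one uses $2$-coercivity on bounded sets together with the $(p-1)$-Hölder continuity of $a\mapsto|a|^{p-2}a$. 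On bounded sets of $U$ this yields a Lipschitz bound, $\alpha_1=1$.
\end{itemize}

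The interpolation exponents then follow from Proposition~\ref{prop-nonlinearinterplation}. When $\alpha_0=\alpha_1$ we get $\eta=\theta$ and $r=\max\{1,q(p-1)\}$. With $q=p'$ this gives $r=p$, which matches the $W^{\cdot,p}$ targets. It remains to identify the interpolation spaces. On the data side, $(B^{-s+\tau}_{p',1},X^{-s,p'})_{\theta,p'}$ is, by reiteration and since $X^{-s,p'}$ is a complex interpolation space, the space $B^{-s+\theta\tau}_{p',p'}=W^{-s+\theta\tau,p'}$, with $\tau=\frac1{p'}$ or $\frac12$. Note the roles of $\theta$ and $1-\theta$: in Tartar's convention $A_0$ is the smoother space, so the parametrization must be adjusted (replace $\theta$ by $1-\theta$) to match the statement. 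On the solution side, $\widetilde X^{s,p}\hookrightarrow\dot B^s_{p,\infty}$ is combined with the reiteration theorem for $(\dot B^{s+\frac1p}_{p,\infty},\dot B^s_{p,\infty})$. The result lands in $\dot B^{s+\theta/p}_{p,p}=\widetilde W^{s+\theta/p,p}$, after shrinking $B_1$ to $\dot B^s_{p,\infty}$, which preserves the second hypothesis. The small-$s$ cases are identical with the shifts $\theta s$ and $\theta\frac{s}{p-1}$.

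The main obstacle is the subquadratic case. There $\alpha_0$ and $\alpha_1$ differ, and the endpoint bounds hold only on bounded subsets, so $\eta\ne\theta$ in general. I would first localize to a ball $U$ in $A_1$, where the factor $\|f\|_{X^{-s,p'}}^{(2-p)/(p-1)}$ is a constant. On that ball the first estimate becomes effectively linear ($\alpha_0=1$) and the second Lipschitz ($\alpha_1=1$), which gives $\eta=\theta$ and $r=q$. If the Lipschitz bound fails to be uniform, I would instead accept $\alpha_1=p-1$ and track the resulting shift in $\eta$. I would also check carefully that the Besov/Sobolev identifications on Lipschitz domains with zero extension, which hold for non-half-integer indices, are legitimate.
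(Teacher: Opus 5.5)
Your route is the paper's route. You use Tartar's Proposition~\ref{prop-nonlinearinterplation}, with the stability/H\"older bound for $f\mapsto u$ at the $A_1$ end and Theorems~\ref{teo-reg-caso-opt} and~\ref{teo-reg-subopt} at the $A_0$ end. You also use $\widetilde X^{s,p}(\Omega)\hookrightarrow\dot B^s_{p,\infty}(\Omega)$ to replace $B_1$; note this enlarges $B_1$, it does not shrink it. For $p\ge 2$, $s\ge\frac1{p'}$ your bookkeeping is correct: $\alpha_0=\alpha_1=\frac1{p-1}$, $q=p'$, $r=p$, then $\theta\mapsto1-\theta$.

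The small-$s$ cases, however, are not ``identical''. Since $\alpha_0=\alpha_1$, the data and solution indices move with the same parameter. Interpolating between $B^0_{p',1}\to\dot B^{sp'}_{p,\infty}$ and $X^{-s,p'}\to\dot B^{s}_{p,\infty}$ therefore pairs the data index $-\theta s$ with the solution index $(1-\theta)sp'+\theta s=s+(1-\theta)\frac{s}{p-1}$. Equivalently, $f\in W^{-(1-\theta)s,p'}(\Omega)$ yields $u\in\widetilde W^{s+\theta\frac{s}{p-1},p}(\Omega)$, and $u\in\widetilde W^{s+\theta s,p}(\Omega)$ when $1<p<2$. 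No relabelling $\theta\mapsto1-\theta$ reconciles this with the printed pairing $-\theta s\leftrightarrow s+\theta\frac{s}{p-1}$.

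The printed pairing is in fact false for $\theta>\frac12$. At $p=2$ the operator is $(-\Delta)^s$, and $u\mapsto((-\Delta)^s u)|_\Omega$ maps $\widetilde W^{t,2}(\Omega)$ into $W^{t-2s,2}(\Omega)$. So $u\in\widetilde W^{s+\theta s,2}(\Omega)$ for every $f\in W^{-\theta s,2}(\Omega)$ would force $W^{-\theta s,2}(\Omega)\subset W^{-(1-\theta)s,2}(\Omega)$. The endpoint $\theta\to1$ already shows the problem: it would give maximal regularity from the minimal data $X^{-s,p'}$. Your argument proves the corrected pairing, and you should state it that way.

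The second gap is the fine index for $1<p<2$. On a ball $U\subset X^{-s,p'}(\Omega)$ you do get $\alpha_0=\alpha_1=1$ and $\eta=\theta$, but then $r=q$. Data in $W^{\sigma,p'}(\Omega)=B^{\sigma}_{p',p'}(\Omega)$ force $q\ge p'$. The output is then $\dot B^{s+\theta/2}_{p,p'}(\Omega)$, which is strictly larger than $\widetilde W^{s+\theta/2,p}(\Omega)=\dot B^{s+\theta/2}_{p,p}(\Omega)$ because $p'>2>p$.

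The identity $r=p$ you used for $p\ge2$ has no analogue here. With $q=p'$ it requires $(1-\eta)\alpha_0+\eta\alpha_1\ge\frac1{p-1}>1$. This is impossible for $\eta\in(0,1)$, since $\alpha_1\le 1$ and, by the $\frac1{p-1}$-homogeneity of $f\mapsto u$, $\alpha_0\le\frac1{p-1}$. Your fallback $\alpha_1=p-1$ only increases both $\eta$ and $r$.

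As sketched, the subquadratic cases give one of the following:
\begin{itemize}
\item $u\in\dot B^{s+\theta/2}_{p,p'}(\Omega)$ for $f\in W^{-s+\theta/2,p'}(\Omega)$;
\item $u\in\widetilde W^{s+\theta/2,p}(\Omega)$ under the stronger hypothesis $f\in B^{-s+\theta/2}_{p',p}(\Omega)$;
\item the stated conclusion up to an $\varepsilon$-loss in differentiability.
\end{itemize}
The exact $W$-to-$W$ mapping needs an additional argument. Neither your sketch nor the paper's one-line justification supplies it.
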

	
	We emphasize that the embedding of Lions-Calderón spaces into Besov spaces is a key ingredient in order to apply the Reiteration Theorem for the real interpolation method; see \cite[Theorem~2.3]{bellido2025bessel}.  
	
	\section*{Acknowledgments}
    J. P. Borthagaray and J. C. Rueda  have been supported in part by Agencia Nacional de Investigación e Innovación (ANII, Uruguay), grant 172393.
	L. M. Del Pezzo and J. C.  Rueda  are supported by ANII, grant 181302.

        \bibliographystyle{abbrv}  
	\bibliography{bibliografiaBO} 
	
\end{document}